\numberwithin{equation}{section}
\def\<{\langle}
 \def\>{\rangle}
\def\e{{\epsilon}}
\def\w{\omega}
\newcommand{\be}{\begin{equation}}
\newcommand{\ee}{\end{equation}}
\newcommand{\bea}{\begin{eqnarray}}
\newcommand{\eea}{\end{eqnarray}}
\newcommand{\beas}{\begin{eqnarray*}}
\newcommand{\eeas}{\end{eqnarray*}}
\newtheorem{theorem}{Theorem}[section]
\newtheorem{Thm}{Theorem}[section]
\newtheorem{definition}[theorem]{Definition}
\newtheorem{Prop}[theorem]{Proposition}
\newtheorem{prop}[theorem]{Proposition}
\newtheorem{Cor}[theorem]{Corollary}
\newtheorem{lemma}[theorem]{Lemma}
\newtheorem{remark}[theorem]{Remark}
\newtheorem{example}[theorem]{Example}
\newtheorem{examples}[theorem]{Examples}
\newtheorem{foo}[theorem]{Remarks}
\newtheorem{As}{Assumption}
\newenvironment{Remark}{\begin{remark}\rm}{\end{remark}}
\newcommand{\q}{\quad} 
\newcommand{\td}{\mathrm{d}}
\def\Prod{\displaystyle\prod}
\def\F{\mathcal{F}}
\def\G{\mathcal{G}}
\def\te#1{\mathrm{e}^{#1}}
\def\mbb{\mathbb} \def\mbf{\mathbf} 
\def\mc{\mathcal} \def\unl{\underline} 
\renewcommand{\le}{\left}
\def\ri{\right}
\def\P{{\mathbb P}}
\def\E{{\mathbb E}}
\begin{document}

\title[Convergence of BS$\Delta$Es driven by random walks]{Convergence of BS$\Delta$Es driven by random walks to BSDEs:\\
the case of (in)finite activity jumps with general driver}

\author{Dilip Madan}
\address{Robert H. Smith School of Business,
University of Maryland} \email{dbm@rhsmith.umd.edu}

\author{Martijn Pistorius}
\address{Department of Mathematics, Imperial College London}
\email{m.pistorius@imperial.ac.uk}

\author{Mitja Stadje}
\address{Department of Econometrics and Operations Research,
University of Tilburg}
\email{m.a.stadje@uvt.nl}
\thanks{{\it Acknowledgements:}
MS acknowledges support by NWO VENI}

\begin{abstract}
In this paper we present a weak approximation scheme for
BSDEs driven by a Wiener process and an (in)finite
activity Poisson random measure with drivers that are general
Lipschitz functionals of the solution of the BSDE.
The approximating backward stochastic difference equations (BS$\Delta$Es) are  driven by random
walks that weakly approximate the given Wiener process and Poisson random measure.
We establish the weak convergence to the solution of the BSDE and
the numerical stability of the sequence of solutions of the BS$\Delta$Es.
By way of illustration we analyse explicitly a scheme with discrete step-size distributions.
\end{abstract}

\keywords{Backward stochastic differential equation
(BSDE), Backward stochastic difference equation
(BS$\Delta$E), convergence, L\'{e}vy process, infinite jump-activity}

\subjclass[2000]{60H10; 60Fxx}

\maketitle

\section{Introduction}
Backward stochastic differential equations (BSDEs) have turned up in a range of different setting,
notably in many applications in mathematical finance such as portfolio optimization and utility
indifference pricing, and also as non-linear expectations---see El  Karoui {\em et al.}~(1997)
for an overview of applications of BSDEs in finance and Delong~(2013) for a recent treatment of
the case of BSDEs with jumps. Unlike in the case of BSDEs without jumps, exact sampling methods from the probability distribution of the increments of the driving Poisson random measures are in general not readily available, which is an issue in the practical implementation of approximation schemes.
Motivated by this observation, we develop in this paper a weak approximation scheme for BSDEs driven by a Wiener process and independent Poisson random measure, allowing the approximating processes to be defined on filtrations that are different from the one the BSDE lives on. We also allow the drivers to take a general
Lipschitz-continuous functional form (see ~\eqref{bsde} below),
which is encountered in many applications.

\subsection*{Setting}
Let $T>0$ be a given horizon and let
$(\Omega,\F, {\mathbb P})$ be a probability space endowed with
a filtration $\mbf F = (\F_t)_{t\in[0,T]}$
generated by a $d_1$-dimensional Wiener process $W$ and an independent
$d_2$-dimensional L\'{e}vy
process ${X}$ ({\em i.e.}, a c\`{a}dl\`{a}g stochastic process with $X_0=0$ and stationary independent increments---refer to {\em e.g.}~Sato (1999) for background on L\'{e}vy processes). We assume that $X$ is a
zero-mean square-integrable process without Gaussian component, in which case $X$ is a pure-jump
martingale given by
\begin{equation}\label{jumps}
{X}_t=\int_{[0,t]\times \mathbb{R}^{d_2}\setminus\{0\}} x (N(\td s\times \td x)-\nu(\td x)\td s) =
\int_{[0,t]\times \mathbb{R}^{d_2}\setminus\{0\}} x
\tilde{N}(\td s\times \td x),\qquad t\in[0,T],
\end{equation} where $\nu$ denotes the L\'{e}vy measure of ${X}$,
$N$ is the Poisson random measure associated to the Poisson
point process $(\Delta{X}_t, t\in[0,T])$
of jumps of ${X}$ and
$\tilde N(\td s\times\td
x) = N(\td s\times\td x) -\nu(\td x)\td s$ is the corresponding
compensated Poisson random measure.
We consider in this paper BSDEs of the form
\begin{eqnarray}
&&\label{bsde} Y_t = F + \int_t^T f(s,Y_s,Z_s,\tilde{Z}_s)\td s -
 \int_t^T Z_s \td W_s-\int_{(t,
T]\times\mathbb{R}^{d_2}\setminus\{0\}}\tilde{Z}_s(x)
\tilde{N}(\td s\times \td x), \quad t\in[0,T], \\
\nonumber
&& \text{with driver function $f:[0,T]\times\mathbb{R}\times \mathbb{R}^{d_1}\times L^2(
\nu(\td x),\mc B(\mbb R^{d_2}\backslash\{0\}))\to\mathbb{R}$,}
\end{eqnarray}
for $\mathcal F_T$-measurable terminal conditions $F\in L^2({\mathbb P})$.
A triplet $(Y,Z,\tilde{Z})$ is called a solution of this BSDE
if ~\eqref{bsde} holds for all $t\in[0,T]$ and the triplet
takes values in
the product of the spaces $\mathcal{S}^2$, $\mathcal{H}^2$ and $\tilde{\mathcal{H}}^2$
of square-integrable $\mbf F$-adapted semi-martingales $Y$,
predictable processes $Z$ and $\mc P\otimes\mc B(\mbb R^{d_2}\setminus\{0\})$-measurable processes $\tilde Z$, respectively.\footnote{That is, these processes are square-integrable
with respect to
$$
|Y|_{\mathcal S^2}:=\E\le[\sup_{t\in[0,T]}|Y_t|^2\ri]^{1/2}, \q |Z|_{\mc H^2} :=
\E\le[\int_0^T |Z_t|^2\td t\ri]^{1/2}\q |\tilde Z|_{\tilde{\mc H}^2} :=
\E\le[\int_0^T\int_{\mbb R^{d_2}\backslash\{0\}} |\tilde Z_t(x)|^2\nu(\td x)\td t\ri]^{1/2}
$$
respectively, where $|\cdot|$ denotes the Euclidean norm. $\mathcal{P}$ denotes the predictable $\sigma$-algebra.} Under the standard setup, which we assume to be
in force throughout,
the driver $f$ is assumed to be such that (i)
$f$ is continuous as function of $t\in[0,T]$
at any $(y,z,\tilde z)$, and (ii) $f$ is
Lipschitz continuous in $(y,z, \tilde{z})$ uniformly for all $t\in [0,T]$, that
    is, there exists a positive $K$ satisfying
\begin{equation}\label{f}
|f(t,y_1,z_1,\tilde{z}_1)-f(t,y_0,z_0,\tilde{z}_0)|\leq
K\bigg(|y_1-y_0|+|z_1-z_0|+ \sqrt{\int_{\mathbb{R}^{d_2}\setminus\{0\}}
|\tilde{z}_1(x)-\tilde{z}_0(x)|^2\nu(\td x)}\bigg),
\end{equation}
for any $y_0, y_1\in\mbb R$,  $z_0, z_1\in\mbb R^{d_1}$ and $\tilde z_0, \tilde z_1\in L^2(
\nu(\td x),\mc B(\mbb R^{d_2}\backslash\{0\}))$. Under these conditions
 it is well-known that the BSDE \eqref{bsde} has a unique solution (see Tang \& Li (1994) and Royer (2006)).

\subsection*{Related literature}
BSDEs with jumps of the form in ~\eqref{jumps} play an
important role in many optimal control problems, see for
instance Tang \& Li~(1994), Eyraud-Loisel~(2005), Lim~(2006),
or Jeanblanc {\em et al.}~(2010).  Another main application of
BSDEs arises in utility maximization, see for instance El
Karoui \& Rouge~(2000), Hu, Imkeller \&\ M\"uller~(2005),
Kl\"oppel \& Schweizer (2007), and Sircar \&\ Sturm~(2011) in a
Brownian filtration. See
Mania \& Schweizer~(2005) and Morlais~(2009a) in a continuous
filtration, and Becherer~(2006) and Morlais~(2009b) in a
setting with finite jump activity, and Morlais~(2009b) and Pelsser \& Stadje~(2014) in a setting with infinite jump acitivity. Royer~(2006)
studied BSDEs driven by Brownian motion and a Poisson random
measure, and their application to $g$-expectations.
In the references quoted above the optimal solutions were characterized in terms of solutions of BSDEs, but
the problem of numerical approximation was not addressed in the case of BSDEs with jumps.

A common way to approximate a BSDE is by discretizing time,
replacing the BSDE by an appropriate discrete time backward
stochastic difference equation (BS$\Delta$E). We will consider
the sequence of BS$\Delta$Es driven by $d_1$-dimensional and
$d_2$-dimensional random walks $W^{(\pi)}$ and ${X}^{(\pi)}$ converging
to $W$ and ${X}$. In a setting without jumps, convergence
results for general random walks have
been obtained in Ma {\em et al.}~(2002), Cheridito \&\ Stadje
(2013), and in Briand {\em et al.}~(2001, 2002) using Picard
iteration arguments as well as results on convergence of
filtrations from Coquet {\em et al.}~(2000). While many authors
studied discrete schemes for the approximation of
solutions of BSDEs in a purely Brownian setting, in a setting
with jumps there is considerably less literature available. Lejay {\em et
al.}~(2007) is concerned with approximation schemes for BSDEs with one
single degenerate jump for a specific approximating process.
Contrary to the  references mentioned earlier in this paragraph
which took a random walk as the approximating process  Bouchard \&\ Elie~(2007)
considered numerical schemes in a pure finite activity jump setting (without a Brownian component) based on a direct discretization of the L\'{e}vy process. They showed convergence results for driver functions taking the form
$f(t,y,\int_{\mathbb{R}^{d_2}\setminus\{0\}}\rho(x)\tilde{z}(x)x
\nu(dx)),$ for a bounded functional $\rho$ and that for driver functions of this form
it suffices to compute (recursively backwards in time) the integral $\int_{\mathbb{R}^{d_2}\setminus\{0\}}\rho(x)\tilde{z}(x)x\nu(\td x)$.
Recently Aazizi~(2013) has extended the convergence results of Bouchard \&\ Elie (2007) to the setting of a forward-backward SDE driven by an infinite activity jump-process.

\subsection*{Contributions}In this paper we introduce a 
discrete-time scheme for the approximation of the solution of a BSDE
driven by a Wiener process and an independent Poisson random measure
allowing for a general Lipschitz-continuous driver function (where the driver may be a functional of
$\tilde{z}$). We prove $L^2$-stability and convergence results for the solutions to BS$\Delta$Es
generated by approximating random walks which may not be defined on the same filtration
as the continuous-time processes. The prime examples are finite (bi-, tri- and multinomial)
trees approximating the driving Brownian motion and L\'evy process.

Unlike the schemes considered in Bouchard \&
Elie~(2007) or Aazizi~(2013) the weak
approximation scheme considered in the current paper neither relies on the discrete process
being a discretization of the continuous-time process, nor on the discrete-time process being defined on the same filtration as the continuous-time process, nor on a Markovian structure.
In fact our results hold
for any suitable random walk and any terminal condition and are more in the spirit of Briand {\em
et al.}~(2002).  Furthermore, in all of the financial mathematics papers
quoted above the BSDEs in question take the more general form
given in (\ref{bsde}) (with the need to compute
the whole functional $\tilde{z}$) of which, up to this point, the
numerical implementation has received little attention in the
literature. One of the main contributions of the current paper
is to analyze this case. In particular, our results include the case of a driving L\'{e}vy process with infinite jump activity.
We show that when the probability of the random walks not moving is strictly positive
our BS$\Delta$Es satisfy strong $L^2$-regularity conditions which lead to stable numerical schemes. Note
that, in the infinite activity case, approximations schemes for L\'evy processes often exclude
or use a special technique to approximate the small jumps, see
for instance Asmussen \&\ Rosi\'nski (2001) and the reference therein for a discussion.

The outline of the proof of convergence is as follows.
We first prove convergence for terminal conditions and drivers satisfying
regularity and differentiability conditions on the underlying
Hilbert space. To overcome the difficulties arising from a
non-continuous limit we apply results from M\'emin (2003) concerning
the extended convergence of filtrations and use that the
solutions of the BS$\Delta$Es satisfy appropriate regularity
properties. The latter is shown by an induction over the Picard
sequences. General arguments on Hilbert spaces then conclude
the proof for smooth terminal conditions and drivers. For the
general case we deploy the $L^2$-regularity properties of the
solutions of BS$\Delta$Es mentioned above.

\subsection*{Contents} The remainder of the paper is structured as follows. First, in Section~\ref{secprel},
we present the random walk approximations and review the associated (extended) weak convergence results
that form the basis of the approximation schemes under consideration.
In Section~\ref{secBSDELTAE} we show numerical stability of the sequence of approximating BS$\Delta$Es
driven by these random walks, which forms an important step towards the main result, the convergence theorem,
that we present together with its proof in Section~\ref{secconv}. By way of illustration we present in Section~\ref{sec:example} an example in our setting of an explicit approximating BS$\Delta$E scheme driven by
a discrete random walk. Some proofs of auxiliary results are deferred to the appendix.

\section{Preliminaries}\label{secprel}
As approximation to the BSDE \eqref{bsde} we consider a sequence of discrete-time BSDEs (also referred to as BS$\Delta$Es, backward stochastic difference equations)
driven by processes with independent stationary increments $(W^{(\pi)}, X^{(\pi)})$
that are constant outside uniform time-grids $\pi$, with the collection of grids $\pi = \pi_{N}, {N\in\mbb N}$
given by $\pi_N :=\{t_0, t_1, \ldots, t_N\}$ with  $t_i= i T/N$, $i=0, \ldots, N$,
with mesh denoted by $\Delta = \Delta_N = T/N$. In the sequel we often write $\pi=\pi_N$ when no confusion is possible and identify the process $(W^{(\pi)}, X^{(\pi)})$ with the random walk
$(W^{(\pi)}_{t_i}, X^{(\pi)}_{t_i})_{t_i\in\pi}$.
In this section we specify these approximating random walks and collect weak-convergence results
that are deployed in the sequel.

\subsection{Random walk approximation}\label{ssecrw}
We assume that $W^{(\pi)}$ and $X^{(\pi)}$ are independent, square-intergrable
martingales defined on the probability space $(\Omega, \mc F^{(\pi)}, \mathbb P)$ which are piecewise constant on $[t_i,t_{i+1})$.
More specifically, let $W^{(\pi)} = (W^{(\pi),1}, \ldots, W^{(\pi),d_1})'$ (where $\prime$ denotes transpose) be a column-vector of zero-mean random walks that have independent stationary increments $\Delta W^{(\pi)}_{t_i} := W^{(\pi)}_{t_{i+1}} - W^{(\pi)}_{t_{i}}$
with second moment matching the corresponding second
moment of a Wiener process subject to a uniform moment-condition, {\em i.e.},
\begin{eqnarray}\label{W}
&& \E_{t_i}\le[\le(\Delta W^{(\pi)}_{t_i}\ri)\le(\Delta W^{(\pi)}_{t_i}\ri)'\ri] =
\Delta I_{d_1}, \qquad i=0, \ldots, N-1,\\
&& \sup_{\pi}\E[|W^{(\pi)}_T|^{2+\epsilon}] < \infty,\q\text{for some $\epsilon>0$},\label{W2}
\end{eqnarray}
where  $I_{d_1}$ the $d_1\times d_1$ identity matrix
and $\E_{t}[\cdot] = \E[\cdot|\mc F^{(\pi)}_{t}]$ for $t\in\pi$,
with $\mathbf F^{(\pi)} = (\mathcal F^{(\pi)}_{t}, t\in\pi)$ denoting the standard filtration generated by $(W^{(\pi)}, X^{(\pi)})$. The increments of $W^{(\pi)}$ may be for instance be taken to follow suitably chosen multivariate Bernoulli or Gaussian distributions.

Moreover, let $X^{(\pi)} = (X^{(\pi),1}, \ldots, X^{(\pi),d_2})'$
be a (column-vector of) zero-mean random walk with independent stationary increments $\Delta X^{(\pi)}_{t_i} := X^{(\pi)}_{t_{i+1}} - X^{(\pi)}_{t_{i}}$ satisfying the moment conditions
\begin{eqnarray}
\label{XX0}
&&  \Delta^{-1/2} \E[|\Delta X^{(\pi)}_{t_i}|] \longrightarrow 0, \q \Delta\to 0, \q\text{and} \\
&& \Delta ^{-1}\E_{t_i}\le[\le(\Delta X^{(\pi)}_{t_i}\ri)\le(\Delta X^{(\pi)}_{t_i}\ri)'\ri] \longrightarrow  \le( \nu_{k,l} \ri)_{k,l=1}^{d_2}, \qquad i=0, \ldots, N-1,\q\text{with}\q
\label{XX1}
\\
&& \nu_{k,l} = \int h_k(x) h_l(x) \nu(\td x),\qquad \text{$h_k(x)=x_k$,\ \  $k=1, \ldots, d_2$},
\ \text{and}
\nonumber\\
&& \sup_{\pi}\E[|X^{(\pi)}_T|^{2+\epsilon}] < \infty,\q\text{for some $\epsilon>0$}.\label{X2}
\end{eqnarray}
Note that ~\eqref{XX0} is satisfied when we take $\Delta X^{(\pi)}_{t_i}$
equal to the increment $X_{t_{i+1}} - X_{t_i}$ of $X$ over the interval $[t_i, t_{i+1}]$:
since $X$ is square-integrable by \eqref{X2}, the first absolute moment
of $X_t$ at small $t$ satisfies
$\E[|X_t|]=O(t)$ for $t\to 0$ (see Ludschgy \& Pag\`{e}s~(2008), Theorem 1).

It is also assumed that the step-size distribution $G^{(\pi)}$ satisfies
\begin{eqnarray}
\label{XX2}
&&   \int_{\mbb R^{d_2}\backslash\{0\}} g(x) \nu^{(\pi)}(\td x)
 \longrightarrow \int_{\mbb R^{d_2}\backslash\{0\}} g(x) \nu(\td x), \\
&&  \text{as $\Delta\to 0$, with}\q \nu^{(\pi)}(\td x):= \Delta^{-1} G^{(\pi)}(\td x),    \nonumber
\end{eqnarray}
for all continuous bounded functions $g:\mbb R^{d_2}\backslash\{0\}\to\mbb R$
that are 0 around $x=0$ and have a limit as $|x|\to\infty$.

Finally, we assume that
there is a positive probability that the random walk $X^{(\pi)}$ remains at the same location from one time-step to the next:
\begin{equation}\label{eq:zero}
 \liminf_{\Delta\to 0} \P\le(\Delta X^{(\pi)}_{t_i} = 0\ri) \ge a, \q \text{for some $a>0$.}
\end{equation}
Under condition \eqref{eq:zero} we establish that the corresponding sequence of
BS$\Delta$Es is numerically stable (see Theorem~\ref{thmstable}).
In the case that $X$ has finite activity  \eqref{eq:zero}
is naturally satisfied by the strong scheme $(X_{t_i})$ taking $a=\te{-\lambda}$ where $\lambda=\nu(\mbb R^{d_2}\backslash\{0\})$
denotes the jump rate. Thus, all strong schemes based on direct discretizations of
L\'evy processes with finite jump-activity that are in
$L^{2+\epsilon}$ satisfy all conditions specified above. While in the complementary case of infinite jump-activity
\eqref{eq:zero} is generally not satisfied by a strong discretisation scheme, this condition can be incorporated
in the construction of a weak scheme---see Section~\ref{sec:example} for explicit examples of weak schemes satisfying
\eqref{eq:zero} and all other conditions given above.

The conditions given in~\eqref{W}, \eqref{XX1} and \eqref{XX2}
are sufficient to guarantee functional weak convergence of the processes $(W^{(\pi)}, X^{(\pi)})$
to the L\'{e}vy process $(W,X)$ as the mesh size $\Delta$ tends to zero.
More precisely, as $\Delta\to 0$ we have
\begin{equation}\label{eqwc}
(W^{(\pi)}, X^{(\pi)}) \stackrel{\mc L}{\longrightarrow} (W,X),
\end{equation}
where $\stackrel{\mc L}{\longrightarrow}$ denotes weak-convergence
in the Skorokhod $J_1$-topology. This assertion follows as a direct consequence
of classical weak convergence theory (see Thm. VII.3.7 in Jacod \& Shiryaev (2003)),
given the conditions in~\eqref{W}, \eqref{XX1} and \eqref{XX2},
the independent increments property of
$(W^{(\pi)}, X^{(\pi)})$ and the independence of $W^{(\pi)}$ and $X^{(\pi)}$
on the one hand and that of $X$ and $W$ on the other hand.

In the sequel we assume that the random variables $(W^{(\pi)}, X^{(\pi)})_\pi$ have been defined
such that the convergence in \eqref{eqwc} holds in probability:
\begin{equation}\label{eqwcP}
(W^{(\pi)}, X^{(\pi)}) \stackrel{{\mbf P}}{\longrightarrow} (W,X),
\end{equation}
where $\stackrel{{\mbf P}}{\longrightarrow}$ denotes convergence in probability
in the Skorokhod $J_1$-topology\footnote{Such random variables can be constructed by the
Skorokhod embedding theorem}.
In the next results we collect  for later reference a number of ramifications of the convergence in ~\eqref{eqwcP}.

\begin{lemma}\label{sw}
{\rm (i)} Let $g:[0,T]\times \mbb R^{d_2}\to\mbb R$ be a continuous function
that is 0 in a neighbourhood of 0. Then
we have
\begin{eqnarray*}
&& \sum_{t_i\in\pi\backslash\{T\}\cap[0,\,\cdot\,]} g(t_i, \Delta X^{(\pi)}_{t_i})
\stackrel{{\mbf P}}{\longrightarrow} \int_{[0,\,\cdot\,]\times\mbb R^{d_2}\backslash\{0\}}g(s,x) N(\td s\times\td x),\\
&& \sum_{t_i\in\pi\backslash\{T\}\cap[0,\,\cdot\,]} \{g(t_i, \Delta X^{(\pi)}_{t_i}) - \E_{t_{i-1}}[g(t_i, \Delta X^{(\pi)}_{t_i})]\}
\stackrel{{\mbf P}}{\longrightarrow} \int_{[0,\,\cdot\,]\times\mbb R^{d_2}\backslash\{0\}}g(s,x) \tilde N(\td s\times\td x), \q\text{as $\Delta\to 0$}.
\end{eqnarray*}
The following limit holds in $L^1$:
\begin{equation}\label{eq:et0}
\lim_{\e\downarrow 0} \limsup_{\Delta\to 0} \sum_{j=1}^{T/\Delta}|\Delta X_{t_j}^{(\pi)}|^2 I_{\{|\Delta X_{t_j}^{(\pi)}|\leq\e\}} = 0.
\end{equation}

{\rm (ii)} Let $\bar{Z}:[0,T]\times\mathbb{R}^{d_2}\setminus\{0\}\to\mathbb{R}$
be a bounded function that is jointly continuous, and zero in a neighbourhood of zero,
and let the function $g^{(\pi)}_s(x)$
be piecewise constant (i.e. $g_s^{(\pi)} = g^{(\pi)}_{t_i}$ for $s\in[t_i, t_{i+1})$),
$(\F^{(\pi)}_{s}\otimes
\mathcal{B}(\mathbb{R}^{d_2}\setminus\{0\}))$-measurable for any $s\in[0,T]$,
and uniformly Lipschitz continuous as function of $x$
({\em i.e.}, for some constant $\hat{K}>0$ it holds
$\sup_{N\in\mbb N, s\in[0,T]}|g^{(\pi_N)}_{s}(x)|\leq \hat{K}|x|$ a.s.).
Then we have as $\Delta\to 0$
\begin{eqnarray}\label{C1}
\E\left[\sup_{i\in\{1, \ldots, N\}}\left|\sum_{j=0}^{i-1}
\int_{\mbb R^{d_2}\backslash\{0\}}g^{(\pi)}_{t_j}(x)^2\nu^{(\pi)}(\td x)\Delta - \int_{[0,t_i]\times\mbb R^{d_2}\backslash\{0\}}
g^{(\pi)}_{s}(x)^2
 \nu(\td x)\td s \ri|\ri]\to 0,\\
\E\left[\sup_{i\in\{1, \ldots, N\}}\left|\sum_{j=0}^{i-1}
\int_{\mbb R^{d_2}\backslash\{0\}}(\bar{Z}_{t_j} g^{(\pi)}_{t_j})(x)\nu^{(\pi)}(\td x)\Delta
- \int_{[0,t_i]\times\mbb R^{d_2}\backslash\{0\}}(\bar{Z}_{s}g^{(\pi)}_{s})(x)
 \nu(\td x)\td s
\ri|\ri]\to 0.\label{C2}
\end{eqnarray}
\end{lemma}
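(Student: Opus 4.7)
The plan is to combine the functional weak convergence \eqref{eqwcP} with the moment and measure convergence assumptions \eqref{XX1}--\eqref{XX2}, and then to extract $L^1$ and in-probability limits by separating small-$x$, compact-$x$ and large-$x$ contributions. For the first convergence in (i), note that $X^{(\pi)}$ only jumps at grid times, so the left-hand side equals $\sum_{s\le\cdot}g(s,\Delta X^{(\pi)}_s)$. Since $g$ vanishes in a neighbourhood of $0$, the functional $\xi\mapsto\sum_{s\le\cdot}g(s,\Delta\xi_s)$ is $J_1$-continuous at c\`adl\`ag paths whose jumps avoid the (at most countable) radii where the limit fails to be continuous; by atomlessness of $\nu$ on a.e.\ sphere the limit process $X$ satisfies this a.s., and the continuous mapping theorem applied to \eqref{eqwcP} yields the first claim. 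For the compensated version, decompose the sum into the uncompensated sum (just handled) and its compensator. By stationarity of the increments and $\nu^{(\pi)}=\Delta^{-1}G^{(\pi)}$, the compensator equals the Riemann sum $\sum_{t_i\le\cdot}\Delta\int g(t_i,x)\,\nu^{(\pi)}(\td x)$, which converges to $\int_0^\cdot\int g(s,x)\,\nu(\td x)\,\td s$ by \eqref{XX2} applied pointwise in $t$ together with the continuity of $g$ in $t$.

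For \eqref{eq:et0}, stationarity of the increments and $\nu^{(\pi)}(\td x)=\Delta^{-1}G^{(\pi)}(\td x)$ give
\[
\E\biggl[\sum_{j=1}^{T/\Delta}|\Delta X^{(\pi)}_{t_j}|^2 I_{\{|\Delta X^{(\pi)}_{t_j}|\le\e\}}\biggr]=T\int|x|^2 I_{\{|x|\le\e\}}\,\nu^{(\pi)}(\td x).
\]
A preliminary step applies Rosenthal's inequality to the iid zero-mean increments of $X^{(\pi)}$ and \eqref{X2} to deduce the uniform bound $\sup_\pi\int|x|^{2+\epsilon}\nu^{(\pi)}(\td x)<\infty$, which supplies uniform integrability at infinity. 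Diagonal summation of \eqref{XX1} gives $\int|x|^2\,\nu^{(\pi)}(\td x)\to\int|x|^2\,\nu(\td x)$; and \eqref{XX2} applied to continuous mollifications of $|x|^2\,I_{\{\e<|x|<M\}}$, combined with the tail bound, gives $\int|x|^2 I_{\{|x|>\e\}}\nu^{(\pi)}(\td x)\to\int|x|^2 I_{\{|x|>\e\}}\nu(\td x)$. Subtracting yields $\limsup_\Delta\int|x|^2 I_{\{|x|\le\e\}}\nu^{(\pi)}(\td x)\le\int|x|^2 I_{\{|x|\le\e\}}\nu(\td x)$, which tends to zero as $\e\downarrow 0$ by dominated convergence against the finite measure $|x|^2\nu(\td x)$.

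For part (ii), the piecewise-constant structure of $g^{(\pi)}_s$ in $s$ makes the sum in \eqref{C1} equal to $\int_0^{t_i}\int g^{(\pi)}_s(x)^2\,\nu^{(\pi)}(\td x)\,\td s$, so the quantity inside the supremum is $\int_0^{t_i}\int g^{(\pi)}_s(x)^2[\nu^{(\pi)}-\nu](\td x)\,\td s$. Passing the supremum inside the expectation via the triangle inequality, it suffices to show $\E\int_0^T\bigl|\int g^{(\pi)}_s(x)^2[\nu^{(\pi)}-\nu](\td x)\bigr|\,\td s\to 0$. I would split the inner integral into $\{|x|\le\e\}$, $\{\e<|x|\le M\}$ and $\{|x|>M\}$. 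Using $|g^{(\pi)}_s(x)|\le\hat K|x|$, the first and third contributions are bounded by $\hat K^2\int|x|^2(\nu^{(\pi)}+\nu)(\td x)$ on the respective sets, and are uniformly small in $\pi$ by \eqref{eq:et0} and the $L^{2+\epsilon}$ tail bound. On the compact annulus $\{\e<|x|\le M\}$ the integrand is uniformly bounded by $\hat K^2M^2$, and the convergence follows from \eqref{XX2} after a uniform-in-$\pi$ approximation of $g^{(\pi)}_s(\cdot)^2$ by deterministic continuous test functions drawn from a countable dense family in $C(\{\e\le|x|\le M\})$, exploiting the linear-growth/Lipschitz control of $g^{(\pi)}_s$. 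The argument for \eqref{C2} is analogous with $\bar Z_s g^{(\pi)}_s$ in place of $(g^{(\pi)}_s)^2$; the bounded, continuous, zero-near-zero properties of $\bar Z$ make the product directly \eqref{XX2}-compatible in $x$, up to the same $\pi$-dependence handled above.

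The main obstacle is the compact-annulus step in part (ii): the integrand $g^{(\pi)}_s(x)^2$ is both random and $\pi$-dependent, whereas \eqref{XX2} provides convergence only for deterministic continuous test functions. The remedy I have in mind is to construct a $\pi$-independent dense family on the annulus and approximate $g^{(\pi)}_s$ uniformly from it, using the linear-growth control, so that \eqref{XX2} can be applied pathwise before taking the outer expectation.
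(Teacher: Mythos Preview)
Your treatment of part (i) is essentially the paper's: the continuous-mapping argument for the uncompensated sum and the compensator identification via \eqref{XX2} are the same, and your Rosenthal-based route to \eqref{eq:et0} is a valid alternative to the paper's, which instead reads off the small-jump mass as the difference between $\limsup_\Delta \E\big[\sum_j|\Delta X^{(\pi)}_{t_j}|^2\big]=\E[|X_T|^2]$ (from \eqref{XX1}) and $\liminf_\Delta \E\big[\sum_j|\Delta X^{(\pi)}_{t_j}|^2 I_{\{|\Delta X^{(\pi)}_{t_j}|>\e\}}\big]\ge \E[|X_T|^2]$ (from the just-established $J_1$-convergence applied to a continuous squeeze $h_\e$ with $I_{\{|x|>2\e\}}\le h_\e\le I_{\{|x|>\e\}}$, plus uniform integrability).

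The compact-annulus step in part (ii) is where your proposal has a genuine gap. Approximating the random, $\pi$-dependent integrand $g^{(\pi)}_s(\omega,\cdot)^2$ by elements of a countable dense family in $C(\{\e\le|x|\le M\})$ does not conclude: \eqref{XX2} gives $\int\phi\,(\nu^{(\pi)}-\nu)\to 0$ only for \emph{fixed} test functions $\phi$, not uniformly over a countable family, and the approximating index you would choose depends on $(\omega,s,\pi)$. The linear-growth bound $|g^{(\pi)}_s(x)|\le\hat K|x|$ alone gives boundedness on the annulus but no equicontinuity, so you cannot upgrade to a finite $\delta$-net without an additional modulus-of-continuity assumption. (If you read ``uniformly Lipschitz'' in the hypothesis as genuine Lipschitz in $x$---which is what actually holds in the paper's application, see Lemma~\ref{lipschitz}---then Arzel\`a--Ascoli yields precompactness and a \emph{finite} net, and your argument can be repaired; but as written, ``countable dense family'' with only linear growth does not close.)

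The paper sidesteps this entirely by a probabilistic coupling. Observe that the normalised restriction $\nu^{(\pi)}(\cdot)/\nu^{(\pi)}(\{|x|>\e\})$ on $\{|x|>\e\}$ is precisely the law of the first jump of the $\e$-truncated process $X^{(\pi)}_\e$, i.e.\ of $X^{(\pi)}_\e(\tau^{(\pi)}_\e)$ where $\tau^{(\pi)}_\e$ is its first jump time; likewise for $\nu$ and $X_\e(\tau_\e)$. Since the position at first exit from a ball is $J_1$-continuous and \eqref{X2} supplies uniform integrability, one has $X^{(\pi)}_\e(\tau^{(\pi)}_\e)\to X_\e(\tau_\e)$ in $L^2$. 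The uniform Lipschitz control on $g^{(\pi)}$ then gives
\[
\Big|\E\big[|g^{(\pi)}(X^{(\pi)}_\e(\tau^{(\pi)}_\e))|^2-|g^{(\pi)}(X_\e(\tau_\e))|^2\big]\Big|\longrightarrow 0,
\]
and since $\nu^{(\pi)}(\{|x|>\e\})\to\nu(\{|x|>\e\})$ by \eqref{XX2}, the difference $\int_{\{|x|>\e\}}g^{(\pi)}(x)^2(\nu^{(\pi)}-\nu)(\td x)$ vanishes in $L^1$. This device handles the randomness and $\pi$-dependence of $g^{(\pi)}$ in one stroke, without any density/approximation argument; the small-$x$ remainder is then controlled by \eqref{eq:et0} exactly as you do.
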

\begin{proof}
(i) The first relation is a direct consequence of the convergence in ~\eqref{eqwc}
and the fact that the map $\w \mapsto (\w, \sum_{s\leq \cdot} g(s,\Delta\w_s))$ (with $\Delta\w_s=\w_s-\w_{s-}$)
is continuous in the Skorokhod $J_1$-topology (see \cite[Cor. VI.2.8]{JS}).
The second relation follows from the first and the convergence in~\eqref{XX2}.  Finally, we turn to \eqref{eq:et0}.
Note that by (\ref{XX1})
\begin{equation}
\label{ge1}
\limsup_{\Delta\to 0}\E\left[ \sum_{j=1}^{T/\Delta}|\Delta X_{t_j}^{(\pi)}|^2\right]  = \limsup_{\Delta\to 0}\E[|X^{(\pi)}_T|^2]=\E[|X_T|^2].
\end{equation}
Furthermore, for any collection of continuous functions $(h_\e)_\e$ satisfying $I_{\{|x|>2\e\}}\leq |h_\e(x)|<I_{\{|x|>\e\}}$
the integrability conditions imply
\begin{equation}
\label{ge2}
\lim_{\e\downarrow 0} \liminf_{\Delta\to 0} \E \Big[\sum_{j=1}^{T/\Delta}|\Delta X_{t_j}^{(\pi)}|^2
I_{\{|\Delta X_{t_j}^{(\pi)})|>\e\}} \Big] \ge \lim_{\e\downarrow 0} \E\Big[\sum_{t:\Delta X_t\neq 0} |\Delta X_{t}|^2 h_\e(\Delta X_{t})\Big]=\E[|X_T|^2].
\end{equation}
The combination of (\ref{ge1}) and (\ref{ge2}) yields (\ref{eq:et0}).

(ii) For any $s\in[0,T]$ and $\e>0$,  the triangle inequality implies
\begin{eqnarray}\label{tride}
&& \left|\int_{\mbb R^{d_2}\backslash\{0\}}g^{(\pi)}_{s}(x)^2\nu^{(\pi)}(\td x) -
\int_{\mbb R^{d_2}\backslash\{0\}}
g^{(\pi)}_{s}(x)^2
 \nu(\td x)
\right| \leq \left|I^{(\pi)}(g^{(\pi)}_{s})\right| + J_\e^{(\pi)}, \q\text{with}\\
\nonumber
&& J_\e^{(\pi)} = \int_{\{|x|\leq \epsilon\}} g^{(\pi)}_{s}(x)^2 \nu^{(\pi)}(\td x) +
\int_{\{|x|\leq \epsilon\}} g^{(\pi)}_{s}(x)^2 \nu(\td x),
\end{eqnarray}
where, for any Borel-function $f\in L^2(\nu(\td x), \mathcal{B}(\mbb R^{d_2}\backslash\{0\}))\cap L^2(\nu^{(\pi)}(\td x),\mathcal{B}(\mbb R^{d_2}\backslash\{0\}))$, we denote
\begin{eqnarray}
&& I^{(\pi)}(f) = \int_{\{|x|>\epsilon\}}f(x)^2\nu^{(\pi)}(\td x) -
\int_{\{|x|>\epsilon\}}f(x)^2\nu(\td x).
\end{eqnarray}

Fix $\delta>0$ arbitrary and choose an $\epsilon>0$ from the set $\{a\in\mbb R^{(d_2}\backslash\{0\}:
\nu(\{x: |x|=|a|\})=0\}$
that satisfies
\begin{equation}\label{eq:deltae}
\hat K
\left(\int_{\{|x|\leq\e\}} |x|^2\nu^{(\pi)}(\td x) + \int_{\{|x|\leq\e\}} |x|^2\nu(\td x) \right)<\delta,
\end{equation}
uniformly over partitions $\pi$ [which is possible in
view of \eqref{eq:et0}].
Let us first show that $I^{(\pi)}(g^{(\pi)}_s)$ converges to zero in $L^1$ for any $s\in[0,T]$.
Let $X^{(\pi)}_{\epsilon}$ and $X_{\epsilon}$ be the pure-jump
processes induced by $X^{(\pi)}$ and $X$ by excluding all jumps smaller than $\e.$
Then $X_\e^{(\pi)}$ converges to $X_\e$ in the Skorokhod $J_1$-topology in probability as $\delta\to 0$.
Since the position
at the epoch of firt exit from a ball is a continuous path-functional in the Skorokhod $J_1$-topology
(see \cite[Prop. VI.2.12]{JS},
it follows in view of the integrability condtion \eqref{X2} that $X_\e^{(\pi)}(\tau_\e^{(\pi)})$ converges to
$X_\e(\tau_\e)$ in $L^2$, where $\tau_\e^{(\pi)} = \inf\{t\ge0: |X^{(\pi)}_{\e,t}|>\e\}$ and
$\tau_\e = \inf\{t\ge0: |X_{\e,t}|>\e\}$ are equal to the first-passage times into the complement
of the ball with radius $\e$.
The observation that $\tau_\e^{(\pi)}$ and
$\tau_\e$ are equal to the first time that
$X_\e^{(\pi)}$ and $X_\e$ jump in conjunction with the uniform Lipschitz-continuity of $g^{(\pi)}$,
\eqref{X2} and the fact $\nu^{(\pi)}(|x|>\e)\to \nu(|x|>\e)$
then imply
\begin{align*}
\lim_{\Delta\to 0}|I^{(\pi)}(g^{(\pi)})|&=\lim_{\Delta\to 0} \nu(|x|>\e)
\left|
\int_{\{|x|>\epsilon\}}g^{(\pi)}(x)^2\frac{\nu^{(\pi)}(\td x)}{\nu^{(\pi)}(|x|>\e)} -
\int_{\{|x|>\epsilon\}}g^{(\pi)}(x)^2\frac{\nu(\td x)}{\nu(|x|>\e)}\right|\\
&=
\lim_{\Delta\to 0}\nu(|x|>\e) \bigg|\E\Big[\big|g^{(\pi)}(X_\e^{(\pi)}(\tau_\e^{(\pi)}))\big|^2
-\big|g^{(\pi)}(X_\e(\tau_\e))\big|^2\Big]\bigg|=0.
\end{align*}
Furthermore, by the uniform Lipschitz-continuity of the function $g_s^{(\pi)}$ we also have that (a) the sequence $I^{(\pi)}(g_s^{(\pi)})$ is uniformly bounded and (b) $J_\e^{(\pi)}$ is bounded by the left-hand side of (\ref{eq:deltae}).
As a consequence, the bounded convergence theorem and the bounds \eqref{tride} and \eqref{eq:deltae}
imply that the limit as $\Delta\to 0$ of the left-hand side
of \eqref{C1} is smaller than $T \delta$. Since $\delta$ is arbitrary
the convergence stated in \eqref{C1} follows.

The proof of convergence in \eqref{C2} is analogous, and is omitted.
\end{proof}
\medskip

\subsection{Extended weak convergence}\label{ssecewc}
In order to establish the convergence of BSDEs
we also need to deploy the notions of extended weak convergence and
weak convergence of filtrations, the definitions of which, we recall from Coquet {\em et al.}~(2004) and
M\'emin (2003), are given as follows:

\begin{definition}\label{def:extco}\rm
Given stochastic processes $Z=(Z_t)_{t\in[0,T]}$ and $(Z^n)_{n\in\mbb N}$ with $Z^n=(Z^n_t)_{t\in[0,T]}$
defined on filtered probability spaces $(\Omega,\G,(\G_t),\mathbb{P})$ and
$(\Omega,\G^n,(\G^n_t),\mathbb{P})$ respectively, we say
(i) $\G^n$ weakly converges to $\G$ [denoted $\G^n\stackrel{w}{\to}\G]$
if for every $A\in \G$ the
sequence of processes $(\E[I_A|\mc G^n_t])_{t\in[0,T]}$ converges
to the process $(\E[I_A|\mc F_t])_{t\in[0,T]}$
and (ii) $(Z^n,\G^n)$ weakly converges to $(Z,\G)$ [denoted $(Z^n,\G^n)\stackrel{w}{\to}(Z,\G)]$
if for every $A\in \G$ the
sequence of processes $(Z^n_t,\E[I_A|\mc G^n_t])_{t\in[0,T]}$ converges
to the process $(Z_t,\E[I_A|\mc F_t])_{t\in[0,T]}$.
In both cases the convergence is in probability under the Skorokhod $J_1$-topology (on the space $D$ of c\`{a}dl\`{a}g functions).
\end{definition}
\begin{Remark}
\label{martconv}
It is clear that the notion of extended weak convergence in general is stronger than the
notion of weak convergence of filtration (see for instance Coquet {\em et al.}~(2004) and
M\'emin (2003) for a discussion).

However, in the notation of the previous definition, if $F^n_i$ converges to $F_i$ in $L^1$ for $i=1,\ldots,m$ and $\G^n\stackrel{w}{\to}\G$,
it may be shown by an application of Doob's maximal inequality (see Coquet {\em et al.}~(2004), Remark 1)
that we have the convergence $(\E[F^n_1|\G^n_\cdot],\ldots,\E[F^n_m|\G^n_\cdot]) \to (\E[F_1|\mc G_\cdot],\ldots,\E[F_m|\mc G_\cdot])$ in probability in the Skorokhod
$J_1$-topology. In particularly, if $\G^n$ converges to $\G$ weakly, $L^n$ is a $\G^n$-martingale and $L$ is a $\G$-martingale then $L^n_T\to L_T$ in $L^1$ implies that $(L^n,\G^n)\stackrel{w}{\to} (L,\G)$ in the extended sense, see also Proposition 7 in Coquet  {\em et al.}~(2004) or Proposition 1 in M\'emin (2003).
\end{Remark}
We recall (from Proposition 2 in M\'{e}min (2003)) that
$(W^{(\pi)}, X^{(\pi)})$ converges to the L\'{e}vy process $(W,X)$
in the sense of extended convergence, due to
the independence of the increments of the two-coordinate processes $W^{(\pi)}$
and $X^{(\pi)}$, in conjunction with the fact that
the filtration $\mc F^{(\pi)}$ is generated by the process $(W^{(\pi)}, X^{(\pi)})$:

\begin{Prop}[Proposition 2, M\'{e}min (2003)]\label{thm:ewx}
We have $((W^{(\pi)}, X^{(\pi)}), \mc F^{(\pi)}) \stackrel{w}{\to} ((W, X), \mc F)$ as $\Delta\to 0$.
In particular, $\mc F^{(\pi)}\stackrel{w}{\to}\mc F$.
\end{Prop}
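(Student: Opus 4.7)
The plan is to split the proof into two parts: first establish weak convergence of filtrations $\mc F^{(\pi)}\stackrel{w}{\to}\mc F$, and then upgrade this to the joint extended convergence of $((W^{(\pi)},X^{(\pi)}),\mc F^{(\pi)})$ using the martingale property of the coordinate processes together with Remark \ref{martconv}. The core idea is to exploit the independent-increments structure, which makes conditional expectations computable in closed form as continuous functions of finitely many values of the driving process.

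For the filtration-convergence part, by a monotone class argument it suffices to show that for every bounded continuous $h:\mbb R^{k(d_1+d_2)}\to\mbb R$ and every $0\le s_1<\cdots<s_k\le T$, the $\mc F^{(\pi)}$-martingale
$$
M^{(\pi),h}_t := \E\bigl[h(W^{(\pi)}_{s_1},X^{(\pi)}_{s_1},\ldots,W^{(\pi)}_{s_k},X^{(\pi)}_{s_k})\bigm|\mc F^{(\pi)}_t\bigr]
$$
converges in probability in the Skorokhod $J_1$-topology to the analogous $\mc F$-martingale $M^h_t$. The independent-increments structure provides an explicit representation: for $t\in[s_j,s_{j+1})$ (with $s_0=0$ and $s_{k+1}=T$),
$$
M^{(\pi),h}_t = H^{(\pi)}_{j,t}\bigl(W^{(\pi)}_{s_1},X^{(\pi)}_{s_1},\ldots,W^{(\pi)}_{s_j},X^{(\pi)}_{s_j},W^{(\pi)}_t,X^{(\pi)}_t\bigr),
$$
where $H^{(\pi)}_{j,t}$ is the bounded continuous function obtained by integrating $h$ against the joint law of the future increments $(W^{(\pi)}_{s_i}-W^{(\pi)}_t,X^{(\pi)}_{s_i}-X^{(\pi)}_t)_{i>j}$, and an identical formula with limiting function $H_{j,t}$ holds for $M^h$. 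The functional weak convergence \eqref{eqwc} yields convergence of the finite-dimensional distributions of the increments, so by the continuity theorem $H^{(\pi)}_{j,t}\to H_{j,t}$ locally uniformly.

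Combining this with \eqref{eqwcP}, and using that the L\'evy process $(W,X)$ almost surely has no jump at any fixed deterministic time, we obtain $(W^{(\pi)}_s,X^{(\pi)}_s)\stackrel{\mbf P}{\to}(W_s,X_s)$ at every continuity point $s$, hence $M^{(\pi),h}_t\to M^h_t$ in probability at all such $t$. The PII representation guarantees that $M^{(\pi),h}$ can only jump where $(W^{(\pi)},X^{(\pi)})$ does, and that both martingales are almost surely continuous at the break times $s_1,\ldots,s_k$; this rules out spurious oscillations and upgrades pointwise convergence to convergence in the Skorokhod $J_1$-topology in probability. This establishes $\mc F^{(\pi)}\stackrel{w}{\to}\mc F$.

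For the second part, note that $(W^{(\pi)},X^{(\pi)})$ is a vector of $\mc F^{(\pi)}$-martingales and $(W,X)$ is an $\mc F$-martingale. The uniform moment bounds \eqref{W2} and \eqref{X2} provide uniform integrability, and together with the convergence $(W^{(\pi)}_T,X^{(\pi)}_T)\stackrel{\mbf P}{\to}(W_T,X_T)$ from \eqref{eqwcP} this yields $L^1$-convergence of the terminal values. Given the weak filtration convergence already established, Remark \ref{martconv} immediately upgrades the individual Skorokhod convergence to joint extended weak convergence $((W^{(\pi)},X^{(\pi)}),\mc F^{(\pi)})\stackrel{w}{\to}((W,X),\mc F)$, as claimed. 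The main obstacle in this plan is the Skorokhod-topology upgrade in the first part, since mere pointwise convergence of càdlàg martingales does not in general imply $J_1$-convergence; this obstacle is neutralized by the explicit PII representation which controls all potential jump times of the $M^{(\pi),h}$ and localizes them to the jump set of $(W^{(\pi)},X^{(\pi)})$ itself.
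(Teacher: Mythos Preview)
The paper does not give its own proof of this proposition: it is recorded as Proposition~2 of M\'emin~(2003) and invoked on the grounds that $(W^{(\pi)},X^{(\pi)})$ has independent increments and generates $\mc F^{(\pi)}$. Your proposal therefore supplies an argument where the paper simply cites the literature.

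Your strategy is the right one and is essentially M\'emin's: exploit the PII structure to write the conditional-expectation martingales as explicit functions of finitely many coordinates of the driving process, and then lift the $J_1$-convergence in~\eqref{eqwcP}. Two remarks are in order. First, Definition~\ref{def:extco} asks for convergence of $\E[I_A\mid\mc F^{(\pi)}_\cdot]$ with $A\in\mc F$, so the random variable inside the conditional expectation is built from $(W,X)$, not from $(W^{(\pi)},X^{(\pi)})$; your reduction to $M^{(\pi),h}$ is still valid, but you should say explicitly that $h(W^{(\pi)}_{s_1},X^{(\pi)}_{s_1},\ldots)\to h(W_{s_1},X_{s_1},\ldots)$ in $L^2$ and invoke Doob's maximal inequality to pass between the two formulations. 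Second, the step ``pointwise convergence plus jumps only at jumps of $(W^{(\pi)},X^{(\pi)})$ implies $J_1$-convergence'' is too quick as stated: the functions $H^{(\pi)}_{j,t}$ depend on both $\pi$ and $t$, and one needs their convergence to $H_{j,t}$ to be locally uniform jointly in $t$ and in the spatial arguments in order to exhibit $M^{(\pi),h}$ as a $J_1$-continuous functional of the path of $(W^{(\pi)},X^{(\pi)})$. A clean way to obtain this is to first take $h(x)=\exp(iu\cdot x)$, for which $H^{(\pi)}_{j,t}$ factorizes into $\exp(iu\cdot(\,\cdot\,))$ times a product of increment characteristic functions, so that the required uniformity follows from L\'evy's continuity theorem; one then concludes by the continuous-mapping theorem and a density argument. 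Your second part, upgrading filtration convergence to extended convergence via Remark~\ref{martconv} and the $L^1$-convergence of $(W^{(\pi)}_T,X^{(\pi)}_T)$, is correct.
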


If a sequence of square-integrable martingales converges to a limit in the sense of
extended convergence that is given above, the convergence
of the corresponding quadratic variation  and predictable compensator processes
also holds true, which is a fact that is deployed in the proof of convergence of BSDEs.

\begin{theorem}[Corollary 12, M\'emin (2003)]
\label{coro12} Let $(L^{(\pi)})$ be a sequence of square integrable
$\G^{(\pi)}$-measurable martingales, and let $L$ be a square
integrable quasi-left continuous $(\G_t)$-martingale.
If $L^{(\pi)}_T\to  L_T$ in $L^2$ and
 $(L^{(\pi)},\G^{(\pi)}) \stackrel{w}{\to}
(L,\G)$, then we have
$$\big(L^{(\pi)} ,[L^{(\pi)} , L^{(\pi)} ],\<L^{(\pi)} , L^{(\pi)} \>\big)\to \big( L,[L,L],\<L,L\>\big)$$ in probability under
the Skorokhod $J_1$-topology, where, for any square integrable martingale $M$, $[M, M]$ and $\<M,M\>$ denote the
associated quadratic variation and predictable compensator, respectively.
\end{theorem}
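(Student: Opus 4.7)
The plan is to split the argument into two parts: (i) establish joint $J_1$-convergence in probability of $(L^{(\pi)}, [L^{(\pi)}, L^{(\pi)}])$ to $(L, [L,L])$, and (ii) upgrade this to include the predictable compensators, making essential use of the quasi-left continuity of $L$. As a preliminary reduction I would first exploit the $L^2$-convergence of $L^{(\pi)}_T$ to $L_T$ together with Doob's maximal inequality to obtain the uniform bound
$$\sup_\pi \E\bigl[\sup_{t\le T}|L^{(\pi)}_t|^2\bigr] < \infty,$$
which via the isometry $\E\bigl[[L^{(\pi)},L^{(\pi)}]_T\bigr] = \E\bigl[(L^{(\pi)}_T - L^{(\pi)}_0)^2\bigr]$ also controls the total mass of the quadratic variations uniformly in $\pi$, delivering the uniform integrability needed to rule out loss of mass in subsequent limit passages.

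For convergence of the optional quadratic variation, my preferred route is via the integration-by-parts identity
$$[L^{(\pi)}, L^{(\pi)}]_t = (L^{(\pi)}_t)^2 - (L^{(\pi)}_0)^2 - 2\int_0^t L^{(\pi)}_{s-}\,\td L^{(\pi)}_s,$$
combined with continuity of the stochastic-integral functional in the Skorokhod topology on uniformly $L^2$-bounded classes of martingales. Since $L^{(\pi)} \to L$ in $J_1$-probability, the integrand $L^{(\pi)}_{s-}$ and the integrator $L^{(\pi)}$ both satisfy a uniform $L^2$ bound, so standard continuity results for stochastic integrals in the Jakubowski--M\'emin--Pag\`es framework yield joint $J_1$-convergence of $(L^{(\pi)}, [L^{(\pi)}, L^{(\pi)}])$ to $(L, [L,L])$. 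One could also obtain the same conclusion by working with the jump structure directly and using that $\w\mapsto\sum_{s\le\cdot}(\Delta\w_s)^2$ is $J_1$-continuous on paths whose jumps do not cluster, together with uniform square-integrability.

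For the predictable compensator, the observation is that $\langle L^{(\pi)}, L^{(\pi)}\rangle$ is the $\G^{(\pi)}$-predictable compensator of $[L^{(\pi)}, L^{(\pi)}]$, and one must show its limit is the $\G$-compensator of $[L,L]$. Using the hypothesis $(L^{(\pi)}, \G^{(\pi)}) \stackrel{w}{\to} (L, \G)$ in the extended sense, together with the $L^1$-convergence of $[L^{(\pi)}, L^{(\pi)}]_T$ to $[L,L]_T$ obtained from the previous step and uniform integrability, Remark~\ref{martconv} propagates convergence to conditional expectations: the martingale $M^{(\pi)} := [L^{(\pi)}, L^{(\pi)}] - \langle L^{(\pi)}, L^{(\pi)}\rangle$ converges in the extended sense to a $\G$-martingale whose difference with $[L,L]$ is predictable, so by Doob--Meyer uniqueness in the filtration $\G$ its negative is $\langle L,L\rangle$. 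The quasi-left continuity of $L$ enters here because it guarantees that $\langle L, L\rangle$ is continuous, which is exactly what is required for the marginal convergence to lift to functional $J_1$-convergence.

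The main obstacle is precisely this last identification: because the filtrations $\G^{(\pi)}$ differ from $\G$, ordinary (non-extended) weak convergence gives no control over the predictable projections, so the extended-convergence hypothesis is unavoidable. Quasi-left continuity of $L$ is equally indispensable: without it the limiting compensator could carry mass at predictable times where the pre-limit compensators (living on different filtrations) need not concentrate, and $J_1$-convergence of $\langle L^{(\pi)}, L^{(\pi)}\rangle$ would fail even though the marginals converge in law. Both assumptions are used at exactly the point where one wants to commute ``predictable projection'' with ``passage to the limit''.
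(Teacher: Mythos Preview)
The paper does not prove this statement at all: Theorem~\ref{coro12} is quoted verbatim as Corollary~12 from M\'emin~(2003) and used as a black box, with no accompanying proof or sketch. There is therefore nothing in the paper against which to compare your proposal.

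That said, your outline is a reasonable summary of the strategy in M\'emin's original paper. The quadratic-variation part does go through an integration-by-parts identity together with the UT (uniform tightness) property of the sequence $(L^{(\pi)})$, which follows from the uniform $L^2$-bound you derive via Doob; this is precisely the Jakubowski--M\'emin--Pag\`es mechanism you invoke. For the predictable compensator, M\'emin's argument is somewhat more delicate than your sketch suggests: the step where you write ``the martingale $M^{(\pi)} := [L^{(\pi)}, L^{(\pi)}] - \langle L^{(\pi)}, L^{(\pi)}\rangle$ converges in the extended sense to a $\G$-martingale whose difference with $[L,L]$ is predictable'' hides the real work, since one must first establish tightness of the sequence $(\langle L^{(\pi)}, L^{(\pi)}\rangle)$ of increasing processes and then identify every subsequential limit. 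Your diagnosis of why quasi-left continuity and extended convergence are needed is correct, but if you intend to write out a full proof you would need to supply a genuine tightness argument for the compensators and a limit identification that does not presuppose convergence of $\langle L^{(\pi)}, L^{(\pi)}\rangle$ before it has been proved.
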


We record some results concerning the convergence of cross-variations which follow
as implications of Theorem~\ref{coro12} and are deployed later in the paper.

\begin{Cor}\label{cor:cv}Under the assumptions on the processes
$(L^{(\pi)})$ and $L$ in Theorem~\ref{coro12}, the following hold true:

{\rm (i)} As $\Delta\to 0$, $\<W^{(\pi)}, L^{(\pi)}\>\to \<W,L\>$, in probability in the Skorokhod $J_1$-topology.

{\rm (ii)} Assume that $\bar{Z}:[0,T]\times\mathbb{R}^{d_2}\to\mathbb{R}$
is bounded, jointly continuous, and zero in an
environment around zero, and consider the stochastic processes
$U^{(\pi)}=(U^{(\pi)}_t)_{t\in[0,T]}$ and $U=(U_t)_{t\in[0,T]}$ given by
$$
{U}^{(\pi)}_t:=
\sum_{t_i\in\pi\cap[0,t]} \{\bar Z_{t_i}(\Delta X^{(\pi)}_{t_i}) - \E_{t_{i-1}}[\bar Z_{t_i}(\Delta X^{(\pi)}_{t_i})]\}, \quad {U}_t:=\int_{[0,t]\times
\mathbb{R}^{d_2}\setminus\{0\}}\bar{Z}_s(x) \tilde{N}(\td s\times \td x).
$$
As $\Delta\to 0$, $\<U^{(\pi)}, L^{(\pi)}\>\to \<U,L\>$, in probability in the Skorokhod $J_1$-topology.
\end{Cor}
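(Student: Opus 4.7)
The plan is to reduce both assertions to Theorem~\ref{coro12} via polarization. For (i) we use
\[
\<W^{(\pi)},L^{(\pi)}\> = \tfrac{1}{2}\bigl(\<W^{(\pi)}+L^{(\pi)},\, W^{(\pi)}+L^{(\pi)}\> - \<W^{(\pi)},W^{(\pi)}\>-\<L^{(\pi)},L^{(\pi)}\>\bigr),
\]
and similarly for (ii) with $U^{(\pi)}$ in place of $W^{(\pi)}$. Since the limits $\<W,W\>$, $\<L,L\>$, $\<U,U\>$, $\<W+L,W+L\>$ and $\<U+L,U+L\>$ are all continuous processes (the predictable compensator of a quasi-left continuous martingale is continuous), Skorokhod $J_1$-convergence to each of these limits coincides with locally uniform convergence in probability, and the latter is preserved under linear combinations. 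It therefore suffices to verify the hypotheses of Theorem~\ref{coro12} separately for $W^{(\pi)}$ (resp.\ $U^{(\pi)}$), $L^{(\pi)}$, and the two sum martingales.

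For part (i), the convergence $\<L^{(\pi)},L^{(\pi)}\>\to\<L,L\>$ is a direct consequence of the standing hypotheses of the Corollary. For $W^{(\pi)}$, $W^{(\pi)}_T\to W_T$ in $L^2$ follows from \eqref{eqwcP} together with the uniform $L^{2+\e}$-bound \eqref{W2}, while the extended weak convergence $(W^{(\pi)},\mc F^{(\pi)})\stackrel{w}{\to}(W,\mc F)$ is Proposition~\ref{thm:ewx}; the Brownian limit is continuous, hence a fortiori quasi-left continuous. For the sum $W^{(\pi)}+L^{(\pi)}$, which is a square-integrable $\mc F^{(\pi)}$-martingale, terminal $L^2$-convergence is immediate from the individual convergences, and extended weak convergence $(W^{(\pi)}+L^{(\pi)},\mc F^{(\pi)})\stackrel{w}{\to}(W+L,\mc F)$ is obtained from $\mc F^{(\pi)}\stackrel{w}{\to}\mc F$ (Proposition~\ref{thm:ewx}) via Remark~\ref{martconv}; quasi-left continuity of $W+L$ holds because $W$ is continuous and $L$ is quasi-left continuous by assumption. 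Applying Theorem~\ref{coro12} three times and combining with the polarization identity yields (i).

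For part (ii), the same polarization strategy applies once we verify that $U^{(\pi)}$ is a square-integrable $\mc F^{(\pi)}$-martingale with $U^{(\pi)}_T\to U_T$ in $L^2$ and $U$ quasi-left continuous. The martingale property holds by construction, and quasi-left continuity of the compensated Poisson integral $U$ is classical. Lemma~\ref{sw}(i) applied with $g(s,x)=\bar Z_s(x)$ (which is continuous, bounded and vanishes in a neighbourhood of zero) yields $U^{(\pi)}_T\to U_T$ in probability. A direct computation of the predictable bracket, using the stationarity and independence of increments, gives
\[
\E\bigl[|U^{(\pi)}_T|^2\bigr] = \sum_{i=1}^{N}\Delta\int_{\mbb R^{d_2}\setminus\{0\}}\!\bar Z_{t_i}(x)^2\,\nu^{(\pi)}(\td x) + O(\Delta),
\]
and Lemma~\ref{sw}(ii)---applicable because the boundedness of $\bar Z$ together with its vanishing near $0$ yields a bound $|\bar Z_s(x)|\le C|x|$ for a suitable constant $C$---implies that the leading sum converges to $\int_0^T\!\int\!\bar Z_s(x)^2\nu(\td x)\td s=\E[|U_T|^2]$. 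Convergence of second moments, combined with convergence in probability, upgrades $U^{(\pi)}_T\to U_T$ to $L^2$, after which the argument of (i) carries through with $W^{(\pi)}$ replaced by $U^{(\pi)}$.

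I expect the main technical step to be the passage from the weak convergence of filtrations $\mc F^{(\pi)}\stackrel{w}{\to}\mc F$ to the extended weak convergence of the sum martingales $W^{(\pi)}+L^{(\pi)}$ and $U^{(\pi)}+L^{(\pi)}$; this is exactly the content of Remark~\ref{martconv} once one has $L^1$-convergence of the terminal values, which is in turn why the preliminary $L^2$-convergence $U^{(\pi)}_T\to U_T$ obtained through Lemma~\ref{sw}(i)--(ii) is essential for part (ii).
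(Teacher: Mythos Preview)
Your proof is correct and follows essentially the same route as the paper: reduce to Theorem~\ref{coro12} via polarization, using Remark~\ref{martconv} and Proposition~\ref{thm:ewx} to obtain extended weak convergence of the sum martingales. The only differences are cosmetic: the paper uses the variant $\<A,B\>=\tfrac14(\<A+B\>-\<A-B\>)$ instead of your three-term identity, and for the $L^2$-convergence $U^{(\pi)}_T\to U_T$ in part~(ii) the paper invokes the uniform $L^{2+\e}$-bound from~\eqref{X2} (since $\bar Z$ is bounded and vanishes near zero) rather than your second-moment computation via Lemma~\ref{sw}(ii); both arguments are valid, and yours avoids appealing to~\eqref{X2} at the cost of a short calculation.
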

\begin{proof}
(i) Since $W^{(\pi)}$ $(L^{(\pi)})$ converges to $W$ ($L$, respectively)
in probability in the Skorokhod $J_1$-topology and $W$ is continuous,
this entails that joint processes $(W^{(\pi)}+ L^{(\pi)},W^{(\pi)}-L^{(\pi)})$
converge in probability in $J_1$ to $(W+L,W-L).$ By Remark \ref{martconv} and Proposition \ref{thm:ewx} this convergence holds true in the extended sense with the filtrations $\F^{(\pi)}$ and $\F$. Since $(W^{(\pi)}_T+ L^{(\pi)}_T,W^{(\pi)}_T-L^{(\pi)}_T)$
actually converges in $L^2$ to $(W+L,W-L)$ (by assumption for $L$ and by conditions \eqref{W} and \eqref{W2} for $W$),
we deduce from Theorem \ref{coro12} that
$(\<W^{(\pi)}+L^{(\pi)}\>,\<W^{(\pi)}-L^{(\pi)}\>)$ converges
to $(\<W+L\>,\<W-L\>)$ in probability in the Skorokhod $J_1$-topology.
As a consequence, we have
\begin{equation}\label{cv1}
\<W^{(\pi)}, L^{(\pi)}\> =
\frac{1}{4}\Big(\<W^{(\pi)}+L^{(\pi)}\>-\<W^{(\pi)}-L^{(\pi)}\>\Big)
\to \frac{1}{4}\Big(\<W+L\>-\<W-L\>\Big)=\<W,L\>
\end{equation}
in probability in the Skorokhod $J_1$-topology, as stated.

(ii) We start by noting (from Lemma~\ref{sw}) that as $\Delta\to 0$
$U^{(\pi)}$ converges to $U$  in probability in the Skorokhod $J_1$-topology.
As $\bar Z$ is bounded and zero in a neighbourhood of zero, it follows from (\ref{X2}) that the collection
$(U^{(\pi)})_{\pi}$ is bounded in $L^{2+\epsilon}$, so that in particular $U^{(\pi)}_T\to U_T$ in $L^2$.
Since the filtration satisfy $\mc F^{(\pi)}\stackrel{w}{\to}\mc F$ we have (by Proposition~\ref{thm:ewx} and Remark~\ref{martconv})
$$(U^{(\pi)}_{\cdot},L^{(\pi)}_{\cdot})= (\mathbb{ E}[U^{(\pi)}_T|\mc F^{(\pi)}_\cdot],
\mathbb{ E}[L^{(\pi)}_T|\mc F^{(\pi)}_\cdot])
\stackrel{\Delta\to 0}{\to}(\mathbb{
E}[U_T|\F_{\cdot}],\mathbb{
E}[L_T|\F_{\cdot}])=(U_{\cdot},L_{\cdot}).$$
By similar arguments as in part (i) it then follows that we have the convergence of
$\<U^{(\pi)}, L^{(\pi)}\>$ to $\<U,L\>$ in probability in the Skorokhod $J_1$-topology.
\end{proof}

\section{BS$\Delta$Es}\label{secBSDELTAE}
We turn next to the formulation of the approximating BS$\Delta$Es, the construction of their
solutions and numerical stability.

\subsection{Formulation}
Since by switching from the Wiener process $W$ to the process $W^{(\pi)}$ we lose the predictable representation property,
it is well known that
we need to include in the formulation of the BS$\Delta$E
an additional orthogonal martingale term $(M^{(\pi)})$, which thus leads us to the following
BS$\Delta$E on the grid $\pi$:
\begin{eqnarray}\nonumber
 Y^{(\pi)}_{t_i} &=& F^{(\pi)} + \sum_{j=i}^{N-1} f^{(\pi)}(t_j, Y^{(\pi)}_{t_j}, Z^{(\pi)}_{t_j},
\tilde Z^{(\pi)}_{t_j})\Delta -\sum_{j=i}^{N-1}Z^{(\pi)}_{t_j} \Delta W^{(\pi)}_{t_j} \\
&& - \sum_{j=i}^{N-1}\le\{\tilde Z^{(\pi)}_{t_j}(\Delta X^{(\pi)}_{t_j})I_{\{\Delta X^{(\pi)}_{t_j}\neq 0\}} -  \E_{t_j}\le[\tilde Z^{(\pi)}_{t_j}(\Delta X^{(\pi)}_{t_j})I_{\{\Delta X^{(\pi)}_{t_j}\neq 0\}}\ri]\ri\}
- \left(M^{(\pi)}_T -  M^{(\pi)}_{t_i}\right), \label{bsdelta}
\end{eqnarray}
 where the random variable $F^{(\pi)}\in L^2(\mathcal F^{(\pi)}_T)$ is the final condition,
and the driver $f^{(\pi)}:[0,T]\times\mbb R\times\mbb R^{d_1}\times L^2(\nu^{(\pi)},\mc B(\mbb R^{d_2}\backslash\{0\}))\to\mbb R$
is a function that is piecewise constant ({\em i.e.},
$f^{(\pi)}(s,\cdot)=f^{(\pi)}(t_i,\cdot)$ for $s\in[t_i, t_{i+1})$)
and is uniformly Lipschitz-continuous in $(y,z,\tilde z)$, {\em i.e.},
for some $K>0$ we have for all $t\in[0,T]$
\begin{equation}\label{KLip}
|f^{(\pi)}(t,y_1,z_1,\tilde z_1) - f^{(\pi)}(t,y_0,z_0,\tilde z_0)| \leq
K\le(|y_1-y_0| + |z_1-z_0| +  \sqrt{\E_{\nu^{(\pi)}}[ (\tilde z_1(\xi) - \tilde z_0(\xi))^2]}\ri),
\end{equation}
where, for any Borel-function $f$,  $\E_{\nu^{(\pi)}}[f(\xi)^2]:= \int f(z)^2 \nu^{(\pi)}(\td z)$.

A quadruple
$(Y^{(\pi)}, Z^{(\pi)}, \tilde Z^{(\pi)}, M^{(\pi)})$
is a solution of the BS$\Delta$E \eqref{bsdelta} if it satisfies \eqref{bsdelta} for all $t_i\in\pi$
where $Y^{(\pi)}_{t_i}$ and (the components of the row-vector) $Z^{(\pi)}_{t_i}$ are in $ L^2(\td \mathbb P,\mathcal F^{(\pi)}_{t_i})$, $\tilde Z^{(\pi)}_{t_i}$ lies in $L^2(G^{(\pi)}(\td x)\times\td \mathbb P, \mathcal B(\mbb R^{d_2}\backslash\{0\})\otimes \mc F_{t_{i}}^{(\pi)})$ and $M^{(\pi)}=(M^{(\pi)}_{t_i})$ is a zero-mean square-integrable
$\mbf F^{(\pi)}$-martingale on $\pi$ that
is orthogonal to $(W^{(\pi)}_{t_i})$ and to the martingales $(M^k_{t_i})$ with increments $\Delta M^k_{t_i} = k_{t_i}(\Delta X^{(\pi)}_{t_i}) -  \E_{t_i}\le[k_{t_i}(\Delta X^{(\pi)}_{t_i})\ri]$
for any function $(k_{t_i})_{t_i}$ with $k_{t_i}\in  L^2(G^{(\pi)}(\td x)\times\td \mathbb P, \mathcal B(\mbb R^{d_2})\otimes \mc F_{t_{i}}^{(\pi)})$.

The BS$\Delta$E can be equivalently expressed in differential notation as
\begin{eqnarray}\nonumber
\Delta Y^{(\pi)}_{t_i} &=& - f^{(\pi)}(t_i, Y^{(\pi)}_{t_i}, Z^{(\pi)}_{t_i},
\tilde Z^{(\pi)}_{t_i})\Delta +  Z^{(\pi)}_{t_i} \Delta W^{(\pi)}_{t_i}\\
&& +\  \le\{\tilde Z^{(\pi)}_{t_i}(\Delta X^{(\pi)}_{t_i})I_{\{\Delta X^{(\pi)}_{t_i}\neq 0\}} -  \E_{t_i}\le[\tilde Z^{(\pi)}_{t_i}(\Delta X^{(\pi)}_{t_i})I_{\{\Delta X^{(\pi)}_{t_i}\neq 0\}}\ri]\ri\}
 + \Delta M^{(\pi)}_{t_i}, \label{bsdelta2}\\
Y^{(\pi)}_T &=& F^{(\pi)},
\end{eqnarray}
where $i=0, \ldots, N-1$.
We have the following result:
\begin{Prop}\label{lem:BSDEsol} For $\Delta<1/K$
the BS$\Delta$E \eqref{bsdelta} has a unique solution
$(Y^{(\pi)}, Z^{(\pi)}, \tilde Z^{(\pi)}, M^{(\pi)})$,
which satisfies the relations: for $t_i\in\pi$,
\begin{eqnarray}\label{Y1}
Y_{t_i}^{(\pi)} &=& f^{(\pi)}(t_i, Y^{(\pi)}_{t_i}, Z^{(\pi)}_{t_i}, \tilde Z^{(\pi)}_{t_i})\Delta
+ \E_{t_i}[Y_{t_{i+1}}^{(\pi)}]\\ \label{Y2}
&=& \E_{t_i}\le[F^{(\pi)} +  \sum_{j=i}^{N-1} f^{(\pi)}(t_j, Y^{(\pi)}_{t_j}, Z^{(\pi)}_{t_j},
\tilde Z^{(\pi)}_{t_j})\Delta \ri],\\
 Z_{t_i}^{(\pi)} &=& \Delta^{-1}\ \E_{t_i}\le[Y^{(\pi)}_{t_{i+1}}\Delta W^{(\pi)}_{t_{i}} \ri],
\label{z1}\\
\tilde Z_{t_i}^{(\pi)}(x) &=& \E_{t_{i}}\le[Y^{(\pi)}_{t_{i+1}}|\Delta X^{(\pi)}_{t_i} = x\ri] - \E_{t_{i}}\le[Y^{(\pi)}_{t_{i+1}}|\Delta X^{(\pi)}_{t_i} = 0\ri],
\label{z2}
\\ \nonumber
\Delta M^{(\pi)}_{t_i} &=& Y_{t_{i+1}}^{(\pi)} - \E_{t_i}\le[Y_{t_{i+1}}^{(\pi)}\ri] -
 Z_{t_{i}}^{(\pi)} \Delta W^{(\pi)}_{t_{i}}\\
 &\phantom{=}&  - \le\{\tilde Z^{(\pi)}_{t_i}(\Delta X^{(\pi)}_{t_i})I_{\{\Delta X^{(\pi)}_{t_i}\neq 0\}}  -  \E_{t_i}\le[\tilde Z^{(\pi)}_{t_i}(\Delta X^{(\pi)}_{t_i})I_{\{\Delta X^{(\pi)}_{t_i}\neq 0\}} \ri]\ri\}. \label{m}
\end{eqnarray}
\end{Prop}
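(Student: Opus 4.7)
The plan is to construct the solution quadruple by a backward induction on the grid $\pi$ starting from $Y_T^{(\pi)} := F^{(\pi)}$, to read off the characterising identities (\ref{Y1})--(\ref{m}) as immediate by-products of the construction, and then to obtain uniqueness by noting that any solution must satisfy those same identities.

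\textbf{The inductive step.} Assume $Y_{t_{i+1}}^{(\pi)} \in L^2(\mc F_{t_{i+1}}^{(\pi)})$ has been defined. I would first take (\ref{z1}) and (\ref{z2}) as the definitions of $Z_{t_i}^{(\pi)}$ and $\tilde Z_{t_i}^{(\pi)}$; assumption (\ref{eq:zero}) guarantees $\P(\Delta X_{t_i}^{(\pi)} = 0) > 0$, so the conditional expectation $\E_{t_i}[Y_{t_{i+1}}^{(\pi)} \mid \Delta X_{t_i}^{(\pi)} = 0]$ appearing in (\ref{z2}) is unambiguously defined. Next, to obtain $Y_{t_i}^{(\pi)}$ I would solve (\ref{Y1}) pointwise in $\omega$: the map $y \mapsto f^{(\pi)}(t_i, y, Z_{t_i}^{(\pi)}, \tilde Z_{t_i}^{(\pi)})\Delta + \E_{t_i}[Y_{t_{i+1}}^{(\pi)}]$ is $(K\Delta)$-Lipschitz on $\mbb R$ with $K\Delta < 1$ by hypothesis, so Banach's fixed-point theorem delivers a unique $\mc F_{t_i}^{(\pi)}$-measurable, square-integrable fixed point (the latter being a consequence of (\ref{KLip}) and the induction hypothesis). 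Finally, set $\Delta M_{t_i}^{(\pi)}$ equal to the right-hand side of (\ref{m}), so that the one-step form (\ref{bsdelta2}) holds by construction.

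\textbf{Orthogonality.} The main verification is that $\Delta M_{t_i}^{(\pi)}$ is orthogonal to $\Delta W_{t_i}^{(\pi)}$ and to every centred variable $k_{t_i}(\Delta X_{t_i}^{(\pi)}) - \E_{t_i}[k_{t_i}(\Delta X_{t_i}^{(\pi)})]$. Plugging (\ref{m}) into $\E_{t_i}[\Delta M_{t_i}^{(\pi)}(\Delta W_{t_i}^{(\pi)})']$ and using (\ref{W}), the $Y_{t_{i+1}}^{(\pi)}$-contribution equals $Z_{t_i}^{(\pi)}\Delta$ by the very definition (\ref{z1}) and cancels with the $Z_{t_i}^{(\pi)}\Delta W_{t_i}^{(\pi)}$-term, while the jump term vanishes by the conditional independence of $\Delta W_{t_i}^{(\pi)}$ and $\Delta X_{t_i}^{(\pi)}$ given $\mc F_{t_i}^{(\pi)}$. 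For the jump-orthogonality, the tower property combined with (\ref{z2}) yields the key identity $\E_{t_i}[Y_{t_{i+1}}^{(\pi)} \mid \Delta X_{t_i}^{(\pi)}] = \tilde Z_{t_i}^{(\pi)}(\Delta X_{t_i}^{(\pi)})I_{\{\Delta X_{t_i}^{(\pi)} \neq 0\}} + \E_{t_i}[Y_{t_{i+1}}^{(\pi)} \mid \Delta X_{t_i}^{(\pi)} = 0]$, after which a short computation shows that the contributions of the three terms in (\ref{m}) to $\E_{t_i}[\Delta M_{t_i}^{(\pi)}(k_{t_i}(\Delta X_{t_i}^{(\pi)}) - \E_{t_i}[k_{t_i}(\Delta X_{t_i}^{(\pi)})])]$ cancel exactly.

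\textbf{Uniqueness.} For uniqueness, given any solution to (\ref{bsdelta}), taking $\E_{t_i}$ of the one-step form (\ref{bsdelta2}) yields (\ref{Y1}); multiplying (\ref{bsdelta2}) by $(\Delta W_{t_i}^{(\pi)})'$ and taking $\E_{t_i}$, using the prescribed orthogonality of $M^{(\pi)}$ with $W^{(\pi)}$, the independence of $W^{(\pi)}$ and $X^{(\pi)}$, and (\ref{W}), recovers (\ref{z1}); conditioning on $\{\Delta X_{t_i}^{(\pi)} = x\}$ versus $\{\Delta X_{t_i}^{(\pi)} = 0\}$ and invoking orthogonality of $M^{(\pi)}$ with all centred bounded functions of $\Delta X_{t_i}^{(\pi)}$ recovers (\ref{z2}); relation (\ref{m}) is then forced, and (\ref{Y2}) follows by iterating (\ref{Y1}) back to time $T$. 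The principal technical subtlety I foresee is the jump-orthogonality argument, which relies both on the well-posedness of (\ref{z2})---itself guaranteed by (\ref{eq:zero})---and on a careful use of the conditional independence of $\Delta W_{t_i}^{(\pi)}$ and $\Delta X_{t_i}^{(\pi)}$ given $\mc F_{t_i}^{(\pi)}$.
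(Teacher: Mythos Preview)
Your proposal is correct and follows essentially the same route as the paper's proof: both arguments use the contraction property of the map $y\mapsto f^{(\pi)}(t_i,y,Z^{(\pi)}_{t_i},\tilde Z^{(\pi)}_{t_i})\Delta + \E_{t_i}[Y^{(\pi)}_{t_{i+1}}]$ (valid since $K\Delta<1$) to solve \eqref{Y1}, recover \eqref{z1} by pairing with $\Delta W^{(\pi)}_{t_i}$ and invoking \eqref{W}, recover \eqref{z2} via conditional-expectation manipulations, and verify the two orthogonality relations for $\Delta M^{(\pi)}_{t_i}$ by direct computation. The only cosmetic differences are that the paper treats uniqueness (deriving the relations from a given solution) before existence, and that for \eqref{z2} the paper argues via test functions $g\in L^\infty$ rather than by direct conditioning on $\{\Delta X^{(\pi)}_{t_i}=x\}$ versus $\{\Delta X^{(\pi)}_{t_i}=0\}$; both variants lead to the same identity.
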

\begin{proof}
First of all we verify that a given solution
$(Y^{(\pi)}, Z^{(\pi)}, \tilde Z^{(\pi)}, M^{(\pi)})$ of the BS$\Delta$E \eqref{bsdelta2} satisfies the stated relations.
By taking conditional expectations with respect to $\mc F^{(\pi)}_{t_i}$ in \eqref{bsdelta} and ~\eqref{bsdelta2}
and using that the martingale increments $\Delta W^{(\pi)}_{t_i}$, $\tilde Z^{(\pi)}_{t_i}(\Delta X^{(\pi)}_{t_i}) -  \E_{t_i}\le[\tilde Z^{(\pi)}_{t_i}(\Delta X^{(\pi)}_{t_i})\ri]$
and $\Delta M^{(\pi)}_{t_i}$ are orthogonal and have zero mean
we find \eqref{Y1} and ~\eqref{Y2}. Similarly, multiplying the left- and right-hand sides of ~\eqref{bsdelta2} with the coordinates of the vector $\Delta W^{(\pi)}_{t_i}$ and subsequently taking the $\mc F^{(\pi)}_{t_i}$-conditional expectations yields ~\eqref{z1} in view of the moment condition in ~\eqref{W}.
Multiplying with an arbitrary function $g\in L^{\infty}(\mc F^{(\pi)}_{t_i}\otimes \mc B(\mbb R^{d_2}))$
and taking conditional expectations and using \eqref{Y1} shows denoting $A = \{\Delta X^{(\pi)}_{t_i}\neq 0\}$
\begin{equation}\label{detzt}
\E_{t_i}\le[\le\{Y^{(\pi)}_{t_{i+1}} - \E_{t_i}[Y^{(\pi)}_{t_{i+1}}]\ri\}g(\Delta X^{(\pi)}_{t_i})\ri] =
\E_{t_i}\le[\le\{\tilde Z^{(\pi)}_{t_i}(\Delta X^{(\pi)}_{t_i})I_{A} - \E_{t_i}[\tilde Z^{(\pi)}_{t_i}(\Delta X^{(\pi)}_{t_i})I_A]\ri\}g(\Delta X^{(\pi)}_{t_i})\ri],
\end{equation}
which implies $I_A \tilde Z^{(\pi)}_{t_i}(\Delta X^{(\pi)}_{t_i}) = C + \E_{t_i}[Y_{t_{i+1}}|\Delta X^{(\pi)}_{t_i}]$
for some $C\in L^2(\mc F^{(\pi)}_{t_i})$. By inserting this expression into \eqref{detzt} and taking $g(x)=I_{\{0\}}(x)$
we find with $A^c = \{\Delta X^{(\pi)}_{t_i}= 0\}$
$$
- \le(C + \E_{t_i}[Y^{(\pi)}_{t_{i+1}}]\ri)\E_{t_i}[I_{A^c}]  =
\E_{t_i}[Y^{(\pi)}_{t_{i+1}}I_{A^c}] - \E_{t_i}[Y^{(\pi)}_{t_{i+1}}]\E_{t_i}[I_{A^c}]
\Rightarrow C =  -\E_{t_i}\le[Y^{(\pi)}_{t_{i+1}}\bigg|{A^c}\ri],
$$
which implies that we have ~\eqref{z2}. The relation \eqref{m} directly follows by combining~\eqref{bsdelta2} and \eqref{Y1}.

Next we verify existence. Define the quadruple $(Y^{(\pi)}, Z^{(\pi)}, \tilde Z^{(\pi)}, M^{(\pi)})$
by the right-hand sides of \eqref{Y1}, \eqref{z1}, \eqref{z2} and \eqref{m}. Note that $Y^{(\pi)}$ is determined
uniquely by the implicit equation \eqref{Y1} (since the map $\Psi: L^2(\td\P,\mc F^{(\pi)}_{t_i}) \to L^2(\td\P,\mc F^{(\pi)}_{t_i})$ given by $\Psi(Y) = f^{(\pi)}(t_i, Y, Z^{(\pi)}_{t_i}, \tilde Z^{(\pi)}_{t_i})\Delta
+ \E_{t_i}[Y_{t_{i+1}}^{(\pi)}]$ is a contraction in case $K\Delta < 1$ as a consequence of the Lipschitz condition
\eqref{KLip}). Furthermore, it is straightforward to verify that the measurability and integrability requirements are satisfied, as well as \eqref{bsdelta2}.

Finally, we verifty the orthogonality of the martingale $M^{(\pi)}$.
To see that  $M^{(\pi)}$ and $W^{(\pi)}$ are orthogonal, we note that
since $\le\{\tilde Z^{(\pi)}_{\cdot}(\Delta X^{(\pi)}_{\cdot})I_{\{\Delta X^{(\pi)}_{\cdot}\neq 0\}}  -  \E_{\cdot}\le[\tilde Z^{(\pi)}_{\cdot}(\Delta X^{(\pi)}_{\cdot})I_{\{\Delta X^{(\pi)}_{\cdot}\neq 0\}} \ri]\ri\}$ and $\Delta W^{(\pi)}_\cdot$ are orthogonal, we have
by definition of $Z_{t_i}^{(\pi)}$ and $\Delta W^{(\pi)}_{t_i}$
$$
\E_{t_i}[\Delta M^{(\pi)}_{t_i}\Delta W^{(\pi)}_{t_i}] =
\E_{t_i}[Y^{(\pi)}_{t_{i+1}}\Delta W^{(\pi)}_{t_i}] - \E_{t_i}[( Z^{(\pi)}_{t_{i}}
 \Delta W^{(\pi)}_{t_i}) \Delta W^{(\pi)}_{t_i}] = 0.
$$
Furthermore, for any function $k_{t_i}\in L^{\infty}(\mc F_{t_i}^{(\pi)}\otimes\mc B(\mbb R^{d_2}))$
it holds
\begin{eqnarray}
\nonumber
\lefteqn{\E_{t_i}[\Delta M^{(\pi)}_{t_i}\{k_{t_i}(\Delta X_{t_i}^{(\pi)}) -
\E_{t_i}[k_{t_i}(\Delta X_{t_i}^{(\pi)})]\}]}\\ \nonumber
&=&
\E_{t_i}[ Y^{(\pi)}_{t_{i+1}}k_{t_i}(\Delta X_{t_i}^{(\pi)})] -
 \E_{t_i}[ Y^{(\pi)}_{t_{i+1}}]\E_{t_i}[k_{t_i}(\Delta X_{t_i}^{(\pi)})]\\
\nonumber
&\phantom{=}& - \E_{t_i}[\tilde Z^{(\pi)}_{t_i}(\Delta X_{t_i}^{(\pi)}) k_{t_i}(\Delta X_{t_i}^{(\pi)})]
+ \E_{t_i}[\tilde Z^{(\pi)}_{t_i}(\Delta X_{t_i}^{(\pi)})]
\E_{t_i}[k_{t_i}(\Delta X_{t_i}^{(\pi)})] = 0,
\label{eq:orth}
\end{eqnarray}
where we used that $\tilde Z^{(\pi)}_{t_i}(0)=0$, inserted the form \eqref{z2} and used the tower-property of conditional expectation. Hence,
$M^{(\pi)}$ is orthogonal to the martingales with increments
$k_{t_i}(\Delta X_{t_i}^{(\pi)}) -
\E_{t_i}[k_{t_i}(\Delta X_{t_i}^{(\pi)})]$, and the proof is complete.
\end{proof}

In the case that the final value $F^{(\pi)}$ is independent of $W^{(\pi)}$
the orthogonal martingale $M^{(\pi)}$ vanishes.
\begin{Prop}
\label{mn}
If $F^{(\pi)}$ is independent of $W^{(\pi)}$ then $M^{(\pi)}\equiv 0.$
\end{Prop}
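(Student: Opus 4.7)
The plan is a backward induction on $i=N,N-1,\ldots,0$ showing that $Y^{(\pi)}_{t_i}$ is measurable with respect to the sub-filtration $\mc G^{(\pi)}_{t_i}:=\sigma(X^{(\pi)}_{t_j}:t_j\leq t_i)$ generated by the jump walk alone, and that $Z^{(\pi)}_{t_i}=0$. The base case $i=N$ uses the hypothesis: together with $\mc F^{(\pi)}_T=\sigma(W^{(\pi)})\vee\sigma(X^{(\pi)})$ and the independence of the two factors, independence of $F^{(\pi)}$ from $W^{(\pi)}$ identifies $F^{(\pi)}$ a.s.\ with a $\sigma(X^{(\pi)})$-measurable random variable, so $Y^{(\pi)}_{t_N}=F^{(\pi)}$ is $\mc G^{(\pi)}_T$-measurable.

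For the inductive step, assume $Y^{(\pi)}_{t_{i+1}}$ is $\mc G^{(\pi)}_{t_{i+1}}$-measurable. Since $\mc G^{(\pi)}_{t_{i+1}}$ is independent of $W^{(\pi)}$, the variable $Y^{(\pi)}_{t_{i+1}}$ is conditionally independent of $\Delta W^{(\pi)}_{t_i}$ given $\mc F^{(\pi)}_{t_i}$, and formula \eqref{z1} together with $\E_{t_i}[\Delta W^{(\pi)}_{t_i}]=0$ immediately yields $Z^{(\pi)}_{t_i}=0$. By \eqref{z2} the map $x\mapsto\tilde Z^{(\pi)}_{t_i}(x)$ is $\mc G^{(\pi)}_{t_i}$-measurable in $\omega$, and $\E_{t_i}[Y^{(\pi)}_{t_{i+1}}]$ collapses to the $\mc G^{(\pi)}_{t_i}$-conditional expectation. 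The implicit equation \eqref{Y1} then reduces to
$$
Y^{(\pi)}_{t_i}=f^{(\pi)}(t_i,Y^{(\pi)}_{t_i},0,\tilde Z^{(\pi)}_{t_i})\Delta+\E_{t_i}[Y^{(\pi)}_{t_{i+1}}],
$$
a contraction (for $K\Delta<1$, as in Proposition~\ref{lem:BSDEsol}) whose right-hand side maps the closed subspace $L^2(\mc G^{(\pi)}_{t_i})$ into itself; its unique fixed point therefore lies in $L^2(\mc G^{(\pi)}_{t_i})$, closing the induction.

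To conclude $M^{(\pi)}\equiv 0$, I will write $Y^{(\pi)}_{t_{i+1}}=h_i(V,\Delta X^{(\pi)}_{t_i})$ for some measurable $h_i$ and some $\mc G^{(\pi)}_{t_i}$-measurable vector $V$, using that $\mc G^{(\pi)}_{t_{i+1}}=\mc G^{(\pi)}_{t_i}\vee\sigma(\Delta X^{(\pi)}_{t_i})$ with the two $\sigma$-algebras independent. Exploiting the independence of $\Delta X^{(\pi)}_{t_i}$ from $\mc F^{(\pi)}_{t_i}$, formula \eqref{z2} simplifies to $\tilde Z^{(\pi)}_{t_i}(x)=h_i(V,x)-h_i(V,0)$, and a case-by-case check on $\{\Delta X^{(\pi)}_{t_i}=0\}$ and its complement then yields
$$
Y^{(\pi)}_{t_{i+1}}-\E_{t_i}[Y^{(\pi)}_{t_{i+1}}]=\tilde Z^{(\pi)}_{t_i}(\Delta X^{(\pi)}_{t_i})I_{\{\Delta X^{(\pi)}_{t_i}\neq 0\}}-\E_{t_i}\!\left[\tilde Z^{(\pi)}_{t_i}(\Delta X^{(\pi)}_{t_i})I_{\{\Delta X^{(\pi)}_{t_i}\neq 0\}}\right].
$$
Substituting this identity and $Z^{(\pi)}_{t_i}=0$ into \eqref{m} gives $\Delta M^{(\pi)}_{t_i}=0$ for every $i$, hence $M^{(\pi)}\equiv 0$.

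The main technical point is the final identity: although intuitively clear, it requires reading the regular conditional expectation in \eqref{z2} carefully and using that $\Delta X^{(\pi)}_{t_i}$ is independent of $\mc F^{(\pi)}_{t_i}$, so that the conditional expectation $\E_{t_i}[\,\cdot\,|\Delta X^{(\pi)}_{t_i}=x]$ reduces to the substitution $x\mapsto h_i(V,x)$. Everything else rests on the contraction structure of the BS$\Delta$E supplied by Proposition~\ref{lem:BSDEsol} and the product structure of the driving random walks.
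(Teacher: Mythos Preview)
Your argument follows essentially the same route as the paper's own proof: both reduce the vanishing of $M^{(\pi)}$ to a one-step representation of $Y^{(\pi)}_{t_{i+1}}$ as a function of $\Delta X^{(\pi)}_{t_i}$ and the past, combined with a backward induction. You make the induction explicit and close it via the contraction on the subspace $L^2(\mc G^{(\pi)}_{t_i})$, which the paper leaves implicit; otherwise the computations (deriving $Z^{(\pi)}_{t_i}=0$ from \eqref{z1}, writing $Y^{(\pi)}_{t_{i+1}}=h_i(V,\Delta X^{(\pi)}_{t_i})$, and checking the representation identity that kills $\Delta M^{(\pi)}_{t_i}$ in \eqref{m}) are the same.

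There is, however, a genuine gap in your base case, and the paper's proof shares it. The implication ``$F^{(\pi)}$ is $\sigma(W^{(\pi)})\vee\sigma(X^{(\pi)})$-measurable, the two factors are independent, and $F^{(\pi)}$ is independent of $W^{(\pi)}$, hence $F^{(\pi)}$ is a.s.\ $\sigma(X^{(\pi)})$-measurable'' is \emph{false} in general. For a concrete counterexample take $N=1$, $\Delta=1$, $\Delta W^{(\pi)}_{t_0}\in\{\pm 1\}$ symmetric, $\Delta X^{(\pi)}_{t_0}\in\{-1,0,1\}$ with $\P(\Delta X^{(\pi)}_{t_0}=0)=\tfrac12$ and symmetric, independent of $\Delta W^{(\pi)}_{t_0}$, and set $F^{(\pi)}=\Delta W^{(\pi)}_{t_0}\cdot\Delta X^{(\pi)}_{t_0}$. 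Then $F^{(\pi)}$ is independent of $W^{(\pi)}$ but is \emph{not} $\sigma(X^{(\pi)})$-measurable; with driver $f^{(\pi)}\equiv 0$ one finds $Z^{(\pi)}_0=0$, $\tilde Z^{(\pi)}_0\equiv 0$ and hence $\Delta M^{(\pi)}_0=F^{(\pi)}\not\equiv 0$, so the statement itself fails. Both your proof and the paper's become correct under the stronger hypothesis that $F^{(\pi)}$ is $\sigma(X^{(\pi)})$-measurable --- which is automatic in the pure-jump setting that is the intended application --- so the fix is to replace ``independent of $W^{(\pi)}$'' by this measurability assumption throughout.
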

In particular, it follows that
in the pure jump case, the martingale $M^{(\pi)}$ is zero and
the representation property holds true.
\begin{proof} The assertion follows directly from (\ref{m})
once we have shown that in
the case that $F^{(\pi)}\in L^2(\F^{(\pi)}_{t_{i+1}})$ is independent of $W^{(\pi)}$
then $Z^{(\pi)}_{t_i}$ and $\tilde Z^{(\pi)}_{t_i}$ defined in \eqref{z1} and \eqref{z2}
are such that $Z^{(\pi)}_{t_i} = 0$ and
\begin{equation}\label{rep}
F^{(\pi)}=\E_{t_i}[F^{(\pi)}] + \le\{\tilde Z^{(\pi)}_{t_i}(\Delta X^{(\pi)}_{t_i}) -  \E_{t_i}\le[\tilde Z^{(\pi)}_{t_i}(\Delta X^{(\pi)}_{t_i})\ri]\ri\}.
\end{equation}
That $Z^{(\pi)}_{t_i} = 0$ follows directly from \eqref{z1} [with $[Y^{(\pi)}_{t_{i+1}}=F^{(\pi)}$],
since $W^{(\pi)}_{t_i}$ has zero mean and is independent of  $F$.
To see that the identity \eqref{rep} holds we first note that, as $F^{(\pi)}\in L^2(\td \mathbb P, \F^{(\pi)}_{t_{i+1}})$
and $F^{(\pi)}$ is independent of $W^{(\pi)}$
there exists a function $f$ in  $L^2(G^{(\pi)}(\td x)\times\td \mathbb P, \mathcal B(\mbb R^{d_2})\otimes \mc F_{t_{i}}^{(\pi)})$ satisfying $F^{(\pi)} = f(\Delta X^{(\pi)}_{t_i})$.
Inserting the forms of $F^{(\pi)}$ and $\tilde Z^{(\pi)}_{t_i}$ in the rhs of \eqref{rep} and
performing straightforward manipulations  (similar to those in the proof of Proposition \ref{lem:BSDEsol})
shows that the rhs and lhs in \eqref{rep} coincide.
\end{proof}

\subsection{Numerical stability}

In this section we turn to the numerical stability of the BS$\Delta$Es in $L^2$ sense.
We start by specifying uniform conditions for the collection of drivers $(f^{(\pi)})$
of the BS$\Delta$Es.

\begin{As}\label{A2}\rm
(i) For some $K>0$,
the drivers $f^{(\pi)}$ are uniformly $K$-Lipschitz continuous ({\em i.e.}, $f^{(\pi)}$ satisfies \eqref{KLip}).

(ii) $f^{(\pi)}(t,0,0,0)$ is bounded uniformly over all $t\in\pi$ and partitions $\pi$.

(iii) For every $(t,y,z)\in[0,T]\times\mbb R\times\mbb R^{d_1}$
and uniformly Lipschitz continuous function $\tilde z$ ({\em i.e.}, $\tilde z$ for which
$|\tilde z(x)|/|x|$ is bounded over all $x\in\mbb R^{d_2}\backslash\{0\}$),
we have
\begin{equation}\label{eq:limd0}
\lim_{\Delta\to0}f^{(\pi)}(t,y,z,\tilde z) = f(t,y,z,\tilde z).
\end{equation}
\end{As}
\begin{foo}\rm
(i) Note that the functions $f^{(\pi)}(t,y,z,\tilde z)$ in \eqref{eq:limd0} are well-defined
since every Lipschitz continuous function $\tilde z$ is square-integrable with respect
to the measures $\nu^{(\pi)}$ and $\nu$.

(ii) In Assumption~\ref{A2}\,(iii) it suffices to require the convergence
of the drivers only for uniformly Lipschitz continuous functions $\tilde z$ as these functions
form a dense subset in $L^2(\nu^{(\pi)}, \mc B(\mbb R^{d_2}\backslash\{0\}))$.

(iii) When the driver $f(t,y,z,\cdot)$ is distribution-invariant under the measure
$\nu(\td x),$ {\em i.e.}, there exists a function $\hat{f}$ such that
$f(t,y,z,\tilde{z})=\hat{f}(t,y,z,\nu\circ \tilde{z}^{-1})$,
a natural first candidate for $f^{(\pi)}$ would be to set
$f^{(\pi)}(t,y,z,\tilde{z}):=\hat{f}(t,y,z,\nu^{(\pi)}\circ
\tilde{z}^{-1})$.
\end{foo}

We have the following estimate for  BS$\Delta$Es as in ~\eqref{bsdelta} with
drivers $f^{(\pi),0}, f^{(\pi),1}$ and
terminal conditions $F^{(\pi),0}, F^{(\pi),1}$
and corresponding solution quadruples denoted by $(Y^{(\pi),k}, Z^{(\pi),k}, \tilde Z^{(\pi),k}, M^{(\pi),k})$,
$k=0,1$, respectively.

\begin{theorem}\label{thmstable}
There exists an $n_0\in\mbb N$ and a constant $\bar C$ such that for all $\pi=\pi_N$ with $N\ge n_0$,
all drivers $f^{(\pi),0}, f^{(\pi),1}$ satisfying Assumption 1(i)-(ii), and
square integrable terminal conditions $F^{(\pi),0}, F^{(\pi),1}$,
and $t_i\in\pi$, we have
\begin{multline}
\E\le[ \max_{t_j\leq t_i, t_j\in\pi}|\delta Y^{(\pi)}_{t_j}|^2
+ \sum_{j=0}^{i-1}\le\{ |\delta Z^{(\pi)}_{t_j}|^2 \Delta
+ |\delta M^{(\pi)}_{t_j}|^2 + |\delta\tilde Z^{(\pi)}_{t_j}(\Delta X^{(\pi)}_{t_j}) -
\E_{t_j}[\delta\tilde Z^{(\pi)}_{t_j}(\Delta X^{(\pi)}_{t_j})]|^2
\ri\}\ri]\\
\leq \bar C \E\le[ |\delta Y^{(\pi)}_{t_i}|^2 + \sum_{j=0}^{i-1} |\delta f^{(\pi)}(t_j, Y^{(\pi),0}_{t_j},
Z^{(\pi),0}_{t_j}, \tilde Z^{(\pi),0}_{t_j})|^2\Delta\ri],\label{eq:stable}
\end{multline}
with $\delta Y^{(\pi)} = Y^{(\pi),0} - Y^{(\pi),1}$, etc.
\end{theorem}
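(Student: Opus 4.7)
\medskip

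\noindent\textbf{Proof proposal.} The plan is to derive a one-step recursive estimate for $\E[|\delta Y_{t_j}^{(\pi)}|^2]$, iterate it, and then upgrade the pointwise bound to a maximal one via Doob's inequality. Subtracting the two BS$\Delta$Es in the form \eqref{bsdelta2} and taking conditional expectations $\E_{t_j}[\cdot]$, the martingale increments vanish and one obtains the ``deterministic'' recursion
\begin{equation*}
\delta Y^{(\pi)}_{t_j} = \E_{t_j}[\delta Y^{(\pi)}_{t_{j+1}}] + \delta f_j^{(\pi)} \Delta, \qquad \delta f_j^{(\pi)} := f^{(\pi),0}(t_j,Y^{(\pi),0}_{t_j},Z^{(\pi),0}_{t_j},\tilde Z^{(\pi),0}_{t_j}) - f^{(\pi),1}(t_j,Y^{(\pi),1}_{t_j},Z^{(\pi),1}_{t_j},\tilde Z^{(\pi),1}_{t_j}).
\end{equation*}
On the other hand, the orthogonality properties of $\Delta W^{(\pi)}$, the centered jump term and $\Delta M^{(\pi)}$ built into Proposition~\ref{lem:BSDEsol} give the Pythagoras identity
\begin{equation*}
\E_{t_j}[|\delta Y^{(\pi)}_{t_{j+1}}|^2] - |\E_{t_j}[\delta Y^{(\pi)}_{t_{j+1}}]|^2 = |\delta Z^{(\pi)}_{t_j}|^2\Delta + \E_{t_j}\bigl[|\delta\tilde Z^{(\pi)}_{t_j}(\Delta X^{(\pi)}_{t_j}) - \E_{t_j}[\delta\tilde Z^{(\pi)}_{t_j}(\Delta X^{(\pi)}_{t_j})]|^2\bigr] + \E_{t_j}[|\Delta \delta M^{(\pi)}_{t_j}|^2].
\end{equation*}

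Combining these and using the elementary inequality $(a+b)^2\le (1+\lambda\Delta)a^2+(1+(\lambda\Delta)^{-1})b^2$ with a parameter $\lambda>0$ to be chosen, I obtain
\begin{equation*}
|\delta Y^{(\pi)}_{t_j}|^2 + V_j \;\le\; (1+\lambda\Delta)\,\E_{t_j}[|\delta Y^{(\pi)}_{t_{j+1}}|^2] + (1+(\lambda\Delta)^{-1})|\delta f_j^{(\pi)}|^2\Delta^2,
\end{equation*}
where $V_j$ collects the three ``good'' terms on the right of the Pythagoras identity. To extract the data $|\delta f^{(\pi)}(t_j,Y^{(\pi),0}_{t_j},Z^{(\pi),0}_{t_j},\tilde Z^{(\pi),0}_{t_j})|^2$ on the RHS of \eqref{eq:stable}, split $\delta f_j^{(\pi)}$ into a driver-shift term and a Lipschitz term using Assumption \ref{A2}(i), and apply $(a+b)^2\le 2a^2+2b^2$. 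This produces an extra contribution proportional to $\Delta^2 K^2(|\delta Y|^2+|\delta Z|^2+\E_{\nu^{(\pi)}}[(\delta\tilde Z_{t_j})^2])$, which must be absorbed into $V_j$.

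The crucial step---and the main obstacle---is absorbing the $\tilde Z$-contribution, since the Lipschitz bound \eqref{KLip} uses the $\nu^{(\pi)}$-norm while $V_j$ only contains the conditional variance of $\delta\tilde Z^{(\pi)}_{t_j}(\Delta X^{(\pi)}_{t_j})$. Here condition \eqref{eq:zero} is essential: because the formula \eqref{z2} gives $\tilde Z^{(\pi)}_{t_j}(0)=0$, if $p_0:=\P_{t_j}(\Delta X^{(\pi)}_{t_j}=0)\ge a$, a short conditional-variance computation yields
\begin{equation*}
\E_{t_j}\bigl[\delta\tilde Z^{(\pi)}_{t_j}(\Delta X^{(\pi)}_{t_j})^2\bigr] \;\le\; a^{-1}\,\mathrm{Var}_{t_j}\bigl[\delta\tilde Z^{(\pi)}_{t_j}(\Delta X^{(\pi)}_{t_j})\bigr],
\end{equation*}
and hence $\Delta^2\,\E_{\nu^{(\pi)}}[(\delta\tilde Z_{t_j})^2]=\Delta\,\E_{t_j}[\delta\tilde Z^{(\pi)}_{t_j}(\Delta X^{(\pi)}_{t_j})^2]\le a^{-1}\Delta\,\mathrm{Var}_{t_j}[\cdot]$, which can be absorbed into the corresponding piece of $V_j$ once $\Delta$ is small enough. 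Choosing $\lambda$ sufficiently large (dependent on $K$ and $a$ but not on $\pi$) and $\Delta$ below a threshold $1/n_0$, the unwanted terms are swallowed and I am left with
\begin{equation*}
\E[|\delta Y^{(\pi)}_{t_j}|^2] + c\,\E[V_j] \;\le\; (1+C\Delta)\E[|\delta Y^{(\pi)}_{t_{j+1}}|^2] + C\,\E[|\delta f^{(\pi)}(t_j,Y^{(\pi),0}_{t_j},Z^{(\pi),0}_{t_j},\tilde Z^{(\pi),0}_{t_j})|^2]\Delta
\end{equation*}
for some positive constants $c,C$. A discrete Gronwall iteration from $j$ up to $i$ yields the bound on each fixed time $t_j$ and, after summation, on the $V_j$-pieces.

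Finally, for the maximum in time, I iterate the recursion $\delta Y^{(\pi)}_{t_j}=\E_{t_j}[\delta Y^{(\pi)}_{t_i}]+\sum_{k=j}^{i-1}\E_{t_j}[\delta f_k^{(\pi)}]\Delta$, so that
\begin{equation*}
\max_{j\le i}|\delta Y^{(\pi)}_{t_j}|^2 \;\le\; 2\max_{j\le i}\bigl|\E_{t_j}[\delta Y^{(\pi)}_{t_i}]\bigr|^2 + 2T\sum_{k=0}^{i-1}|\delta f_k^{(\pi)}|^2\Delta.
\end{equation*}
Taking expectations, applying Doob's $L^2$ maximal inequality to the martingale $j\mapsto \E_{t_j}[\delta Y^{(\pi)}_{t_i}]$, and bounding $\sum|\delta f_k^{(\pi)}|^2\Delta$ by the Lipschitz decomposition combined with the per-step estimates already obtained for $|\delta Y^{(\pi)}_{t_k}|^2$, $|\delta Z^{(\pi)}_{t_k}|^2\Delta$ and the variance term, closes the argument and produces the constant $\bar C$ in \eqref{eq:stable}.
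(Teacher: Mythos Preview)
Your argument is correct and complete in outline, but it follows a different route from the paper's. You proceed via a \emph{one--step} recursion: the Pythagoras identity for $\E_{t_j}[|\delta Y_{t_{j+1}}|^2]$, the Young inequality with parameter $\lambda$, absorption of the Lipschitz remainder (crucially using condition~\eqref{eq:zero} to dominate $\E_{t_j}[\delta\tilde Z^2]$ by $a^{-1}\,\mathrm{Var}_{t_j}[\delta\tilde Z]$), then a discrete Gronwall iteration, and finally Doob's inequality for the running maximum. This is the natural discrete analogue of the standard continuous-time a~priori estimate (Remark~\ref{remstable}).

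The paper instead follows Briand--Delyon--M\'emin: it starts from the \emph{integrated} difference equation on an interval $[t_m,t_k]$, applies Doob's inequality immediately (display~\eqref{ffour}), combines this with the martingale orthogonality identity~\eqref{eq:est1}, and arrives at an estimate of the form ``LHS~$\le C(t_k-t_m)\cdot$LHS $+$ data'' on the whole interval (display~\eqref{fest}). The zero--jump condition enters via Lemma~\ref{finitejumps} in summed form. One then chooses $t_k-t_m$ small enough that $C(t_k-t_m)\le\tfrac16\min\{1,C'\}$ and concatenates finitely many such intervals.

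Both routes hinge on the same key observation---your inequality $\E_{t_j}[\delta\tilde Z^2]\le a^{-1}\mathrm{Var}_{t_j}[\delta\tilde Z]$ is exactly Lemma~\ref{finitejumps} applied pointwise. Your approach is slightly more economical in that the choice of $\lambda$ replaces the interval subdivision, and it mirrors more closely the continuous-time proof; the paper's block argument makes the $n_0$ threshold more explicit and avoids tracking the one-step constants. One small point: in your maximal step the bound $\max_j|\sum_{k\ge j}\E_{t_j}[\delta f_k]\Delta|^2\le T\sum_k|\delta f_k|^2\Delta$ needs a Doob application to the martingale $j\mapsto\E_{t_j}\bigl[\sum_k|\delta f_k|\Delta\bigr]$ before Cauchy--Schwarz, not just Cauchy--Schwarz alone; you should make this explicit.
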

\begin{Remark}\label{remstable}
In continuous-time the following analogous estimate holds true for some constant $\bar c>0$:
 \begin{eqnarray}
 \label{2stable}
\lefteqn{\mathbb{E}\left[\sup_{0\leq t\leq t'}|\delta  Y_t|^2  +
\int_0^{t'} |\delta Z_s|^2 \td s+\int_{[0,t'] \times \mathbb{R}^{d_2}\setminus\{0\}}|\delta \tilde{Z}_s(x)|^2 \nu(\td x)\td s \right]}\\
&\leq& \bar c \,\mathbb{E}\bigg[|\delta Y_{t'}|^2
+\int_0^{t'} |\delta f(s,Y^0_s,Z^{0}_s,\tilde{Z}^{0}_s)|^2\td s\bigg], \qquad t'\in[0,T].\nonumber
 \end{eqnarray}
For a proof of (\ref{2stable}), see for instance to Proposition 3.3 in
Becherer (2006) or Lemma 3.1.1 in Delong~(2013).
\end{Remark}
In the proof of Theorem \ref{thmstable}, which is provided in the Appendix,
the following estimate is deployed which is a consequence of the zero-jump-condition \eqref{eq:zero}:

\begin{lemma}
\label{finitejumps} There exist $\delta_0>0$ and $C'>0$ such that
for all $\Delta\leq \delta_0$, for all functions
$(\tilde{U}_{t_j})_j$ with $\tilde U_{t_j}(0)=0$ and $\tilde{U}_{t_j}\in
L^2(\nu^{(\pi)}(\td x)\times
\td\mathbb{P},\mathcal{B}(\mathbb{R}^{d_2})
\otimes\F^{(\pi)}_{t_j})$, and for any $j=0, \ldots, n-1$
we have
\begin{equation}\label{eq:Cp}
\sum_{i=j}^{n-1}
\Big(\E_{t_i}\le[|\tilde{U}_{t_i}(\Delta
X^{(\pi)}_{t_i})|^2\ri] - \Big|\E_{t_i}\le[\tilde{U}_{t_i}(\Delta
X^n_{t_i})\ri]\Big|^2\Big)
\ge
C' \sum_{i=j}^{n-1}
\Big|\E_{t_i}\le[\tilde{U}_{t_i}(\Delta
X^{(\pi)}_{t_i})\ri]\Big|^2.
\end{equation}
\end{lemma}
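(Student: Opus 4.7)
The plan is to reduce the claim to a pointwise (in $i$) conditional estimate, which I then derive via a single application of the Cauchy--Schwarz inequality exploiting the vanishing of $\tilde U_{t_i}$ at the origin together with the zero-jump lower bound \eqref{eq:zero}.

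First, I will fix $\delta_0>0$ so that \eqref{eq:zero} gives $\mathbb P(\Delta X^{(\pi)}_{t_i}=0)\ge a$ for every $i$ whenever $\Delta\le \delta_0$; this holds uniformly in $i$ because the increments $\Delta X^{(\pi)}_{t_i}$ are i.i.d.\ and, in particular, independent of $\F^{(\pi)}_{t_i}$. Next, I will argue that it suffices to prove the termwise inequality
\[
\E_{t_i}\!\le[|\tilde U_{t_i}(\Delta X^{(\pi)}_{t_i})|^2\ri]-\bigl|\E_{t_i}[\tilde U_{t_i}(\Delta X^{(\pi)}_{t_i})]\bigr|^2\;\ge\;C'\,\bigl|\E_{t_i}[\tilde U_{t_i}(\Delta X^{(\pi)}_{t_i})]\bigr|^2
\]
for each $i$, because summing this over $i=j,\ldots,n-1$ gives \eqref{eq:Cp}.

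For the termwise estimate I exploit $\tilde U_{t_i}(0)=0$: on the event $A_i:=\{\Delta X^{(\pi)}_{t_i}=0\}$ both $\tilde U_{t_i}(\Delta X^{(\pi)}_{t_i})$ and its square vanish. Conditioning on $\F^{(\pi)}_{t_i}$ (treating $\tilde U_{t_i}$ as deterministic and using independence of $\Delta X^{(\pi)}_{t_i}$ from $\F^{(\pi)}_{t_i}$), I obtain
\[
\E_{t_i}[\tilde U_{t_i}(\Delta X^{(\pi)}_{t_i})]=\E_{t_i}\!\le[\tilde U_{t_i}(\Delta X^{(\pi)}_{t_i})\,I_{A_i^c}\ri],
\]
and an application of Cauchy--Schwarz together with $\mathbb P(A_i^c)\le 1-a$ yields
\[
\bigl|\E_{t_i}[\tilde U_{t_i}(\Delta X^{(\pi)}_{t_i})]\bigr|^2\le \mathbb P(A_i^c)\,\E_{t_i}\!\le[|\tilde U_{t_i}(\Delta X^{(\pi)}_{t_i})|^2\ri]\le (1-a)\,\E_{t_i}\!\le[|\tilde U_{t_i}(\Delta X^{(\pi)}_{t_i})|^2\ri].
\]
Rearranging gives $\E_{t_i}[|\tilde U_{t_i}|^2]-|\E_{t_i}[\tilde U_{t_i}]|^2\ge \tfrac{a}{1-a}\,|\E_{t_i}[\tilde U_{t_i}]|^2$, so I may take $C':=a/(1-a)>0$. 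Summing over $i$ completes the argument.

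There is no real obstacle here: the computation is essentially a one-line application of Cauchy--Schwarz. The only thing to watch carefully is that the constant $C'$ is independent of $\pi$ and of the functions $\tilde U_{t_j}$, which is guaranteed because $a$ in \eqref{eq:zero} is a uniform lower bound and the estimate is pointwise in $\w$ and $i$.
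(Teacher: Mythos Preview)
Your proof is correct and follows essentially the same route as the paper's: both exploit $\tilde U_{t_i}(0)=0$ to insert the indicator $I_{\{\Delta X^{(\pi)}_{t_i}\neq 0\}}$ and then apply Cauchy--Schwarz (the paper calls it H\"older) together with the zero-jump condition \eqref{eq:zero}. The only minor imprecision is that \eqref{eq:zero} gives a $\liminf$, so strictly you only get $\mathbb P(A_i^c)\le 1-a+\varepsilon$ for $\Delta\le\delta_0(\varepsilon)$; this is exactly why the paper works with $a-\delta$ rather than $a$, and your constant becomes $C'=(a-\varepsilon)/(1-a+\varepsilon)$, which is still positive.
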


\begin{proof}
Assume without loss of generality that $j=0.$
Using H\"older's inequality we have
\begin{align}\nonumber
 \sum_{i=0}^{n-1}
\Big|\E_{t_i}\le[\tilde{U}_{t_i}(\Delta
X^{(\pi)}_{t_i})\ri]\Big|^2 &=
 \sum_{i=0}^{n-1}
\Big|\E_{t_i}\le[\tilde{U}_{t_i}(\Delta
X^{(\pi)}_{t_i})I_{\{\Delta
X^{(\pi)}_{t_i} \neq 0\}}\ri]\Big|^2\\
&\leq \Big(\max_i \mathbb{P}[\Delta
X^{(\pi)}_{t_i}\ne 0]\Big)  \sum_{i=0}^{n-1}
\E_{t_i}\le[\Big|\tilde{U}_{t_i}(\Delta
X^{(\pi)}_{t_i})\Big|^2\ri].
\label{b}
\end{align}
Since $X^{(\pi)}$ has stationary increments the
first factor in the final line is equal to
$\mathbb{P}[\Delta
X^{(\pi)}_{t_1}\ne 0]$,
which is bounded above by $(1-a+\delta)$
for all partitions with mesh $\Delta\leq \delta_0$,
where  $\delta$ is some number small enough such
that $a-\delta>0,$ and $\delta_0$ is chosen sufficiently small
using \eqref{eq:zero}. By combining the upper bound with \eqref{b}
we obtain \eqref{eq:Cp} (with $C'=a-\delta$).
\end{proof}

\subsection{Solution of the BS$\Delta$E via Picard iteration}\label{ssecpicard}

The process $(Y^{(\pi)}, Z^{(\pi)}, \tilde Z^{(\pi)})$
satisfying the BS$\Delta$E can be obtained as the limit of
an recursively defined Picard sequence
$(Y^{(\pi,p)},
Z^{(\pi,p)},
\tilde Z^{(\pi,p)})_{p\in\mbb N^*}$, which is initialised with
 $(Y^{(\pi,0)},
Z^{(\pi,0)},
\tilde Z^{(\pi,0)}) \equiv (0,0,0)$
 and is defined  for $p\in\mbb N$ and $t_i\in\pi$ by the right-hand sides of
formulas \eqref{Y2}, \eqref{z1} and \eqref{z2} respectively,
with  $Y^{(\pi)}_{t_j}$, $Z^{(\pi)}_{t_j}$ and $\tilde Z^{(\pi)}_{t_j}$ replaced by
 $Y^{(\pi,p-1)}_{t_j}$, $Z^{(\pi,p-1)}_{t_j}$ and $\tilde Z^{(\pi,p-1)}_{t_j}$.
We may associate to the sequence $(Y^{(\pi),p},
Z^{(\pi),p}, \tilde Z^{(\pi),p})_{p\in\mbb N^*}$ a sequence of square-integrable orthogonal martingales
$(M^{(\pi),p})_{p\in\mbb N^*}$ defined by
$M^{(\pi),0}\equiv 0$ and for $p\in\mbb N$ by $M^{(\pi),p} = \{M^{(\pi),p}_{t_i}, t_i\in\pi\}$ with
\begin{eqnarray*}
\Delta M^{(\pi),p}_{t_i} =
Y_{t_{i+1}}^{(\pi),p} - \E_{t_i}\le[Y_{t_{i+1}}^{(\pi),p}\ri] -
Z_{t_{i}}^{(\pi),p}\Delta W^{(\pi)}_{t_{i}}
 \ -\  \le\{\tilde Z^{(\pi),p}_{t_i}(\Delta X^{(\pi)}_{t_i}) -  \E_{t_i}\le[\tilde Z^{(\pi),p}_{t_i}(\Delta X^{(\pi)}_{t_i})\ri]\ri\}.
\end{eqnarray*}
We also note that we have
\begin{eqnarray} \label{fwrep}
&&\q m^{(\pi)}_{t_i} := \E_{t_i}\le[F^{(\pi)}   + \sum_{t_j\in\pi} f^{(\pi)}(t_j, Y_{t_j}^{(\pi),p},
Z_{t_j}^{(\pi),p}, \tilde Z_{t_j}^{(\pi),p})\Delta\ri]\\
&=& Y_0^{(\pi),p+1} + \sum_{t_j\in\pi, j< i} Z_{t_j}^{(\pi),p+1}\Delta W^{(\pi)}_{t_j}
+ \sum_{t_j\in\pi, j< i} \le\{\tilde Z_{t_j}^{(\pi),p+1}(\Delta X^{(\pi)}_{t_j}) -
\E_{t_i}\le[\tilde Z_{t_j}^{(\pi),p+1}(\Delta X^{(\pi)}_{t_j})\ri]\ri\}
+ M^{(\pi),p+1}_{t_i}.\nonumber
\end{eqnarray}
It is well-known that, as $p$ tends to infinity, the Picard sequence $(Y^{(\pi,p)},
Z^{(\pi,p)}, \tilde Z^{(\pi,p)}, M^{(\pi,p)})$
converges to $(Y^{(\pi)}, Z^{(\pi)},
\tilde Z^{(\pi)}, M^{(\pi)})$ .
In particular, it follows from Theorem~\ref{thmstable}
(by reasoning analogously as in Corollary 10 in Briand {\it et al.} (2002))
that for some $n_0\in\mbb N$ it holds
\begin{multline}\label{cor:picbsdde}
\sup_{\pi_N: N\ge n_0}
\E\bigg[
\sup_{t_i\in\pi_N}|Y^{(\pi)}_{t_i} - Y^{(\pi),p}_{t_i}|^2 + \sum_{t_i\in\pi_N}\{|Z^{(\pi)}_{t_i} - Z^{(\pi),p}_{t_i}|^2\Delta + \Delta(M^{(\pi)} - M^{(\pi),p})^2_{t_i}\} \\
\le. + \sum_{t_i\in\pi_N}
\le\{\tilde Z^{(\pi)}_{t_i}(\Delta X^{(\pi)}_{t_i}) - \tilde Z^{(\pi),p}_{t_i}(\Delta X^{(\pi)}_{t_i})
- \E_{t_i}\le[\tilde Z^{(\pi)}_{t_i}(\Delta X^{(\pi)}_{t_i}) - \tilde Z^{(\pi),p}_{t_i}(\Delta X^{(\pi)}_{t_i})\ri]
\ri\}^2
\ri]  \to 0 \q\text{as $p\to\infty$}.
\end{multline}

\section{Convergence}\label{secconv}
With the results concerning the convergence of the approximating random walks
and the properties of the discrete time BSDEs in hand, we turn next to the question
of weak convergence of BS$\Delta$Es to the limiting BSDE as the mesh size tends to zero.
Let $Y^{(\pi)}_t=Y^{(\pi)}_{t_i}$ for $t_i\leq t<t_{i+1}$ and define $(Z^{(\pi)}_t,\tilde{Z}^{(\pi)}_t,M^{(\pi)}_t)$ similarly.
\begin{Thm}\label{thm}
Let $(\pi)$ be a sequence of partitions $\pi$ with the mesh $\Delta$ tending to zero.
If $F^{(\pi)}$ converges to $F$ in $L^2$, then $Y^{(\pi)}\stackrel{\mathcal L}{\longrightarrow} Y$ and in particular
$$
Y_0^{(\pi)} \to Y_0.
$$
Moreover, with $d_S$ denoting the Skorokhod metric, we have
$$
\E[d_S^2(Y^{(\pi)},Y)] \to 0.
$$
\end{Thm}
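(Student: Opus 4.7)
The plan is a two-stage argument, following the outline sketched in the introduction: first establish convergence for a smooth class of data (terminal conditions and drivers that are well-behaved cylindrical functionals of $(W,X)$), then extend to the general case by an $L^2$-density argument anchored on the numerical stability estimate of Theorem~\ref{thmstable}. For the smooth stage I would work with the Picard sequences of Section~\ref{ssecpicard}. Let $(Y^p,Z^p,\tilde Z^p)$ denote the continuous-time Picard iterates driven by $(F,f)$ and $(Y^{(\pi),p},Z^{(\pi),p},\tilde Z^{(\pi),p},M^{(\pi),p})$ the discrete iterates driven by $(F^{(\pi)},f^{(\pi)})$, both initialised at zero. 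Assume first that $F$ is a bounded smooth functional of $(W_{s_1},\ldots,W_{s_m},X^{\epsilon}_{s_1},\ldots,X^{\epsilon}_{s_m})$ (with $X^{\epsilon}$ the truncation of $X$ to jumps of size exceeding $\epsilon$) and that $f^{(\pi)}$ arises from $f$ in a regular way satisfying Assumption~\ref{A2}. The target is to show by induction on $p$ that $(Y^{(\pi),p},Z^{(\pi),p},\tilde Z^{(\pi),p},M^{(\pi),p})\to (Y^p,Z^p,\tilde Z^p,0)$ in the appropriate Skorokhod-$L^2$ sense as $\Delta\to 0$.

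The base case $p=0$ is trivial. The inductive step hinges on the martingale identity~\eqref{fwrep}: the martingale
\[
m^{(\pi),p+1}_{t_i}=\E_{t_i}\Bigl[F^{(\pi)}+\sum_{t_j\in\pi}f^{(\pi)}(t_j,Y^{(\pi),p}_{t_j},Z^{(\pi),p}_{t_j},\tilde Z^{(\pi),p}_{t_j})\Delta\Bigr]
\]
decomposes as $Y^{(\pi),p+1}_0$ plus stochastic integrals against $\Delta W^{(\pi)}$ and the compensated $X^{(\pi)}$-jump martingale, plus $M^{(\pi),p+1}$. By the inductive hypothesis, Assumption~\ref{A2} and Lemma~\ref{sw}(ii), $m^{(\pi),p+1}_T\to m^{p+1}_T:=F+\int_0^T f(s,Y^p_s,Z^p_s,\tilde Z^p_s)\td s$ in $L^2$, and Proposition~\ref{thm:ewx} together with Remark~\ref{martconv} promote this to extended weak convergence of the whole martingale. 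I would then identify the pieces of the decomposition via the cross-variation results of Corollary~\ref{cor:cv}: the bracket $\langle m^{(\pi),p+1},W^{(\pi)}\rangle$ equals $\int Z^{(\pi),p+1}\td s$ (piecewise constant) and converges to $\langle m^{p+1},W\rangle=\int Z^{p+1}\td s$ by part (i), while the brackets $\langle m^{(\pi),p+1},U^{(\pi)}\rangle$ against the test-function jump-martingales $U^{(\pi)}$ of part (ii) identify the limit of $\tilde Z^{(\pi),p+1}$, tested against a dense collection of continuous $\bar Z$, as $\tilde Z^{p+1}$. The orthogonal remainder $M^{(\pi),p+1}$ then vanishes in the limit by the predictable representation property on $\mathbf F$. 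Combining with \eqref{Y1} closes the induction and yields $\E[d_S^2(Y^{(\pi),p},Y^p)]\to 0$ for each fixed $p$.

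Passing $p\to\infty$ relies on the uniformity over $\pi$ in the discrete Picard estimate~\eqref{cor:picbsdde} together with its continuous-time analogue derived from~\eqref{2stable}; a standard $\varepsilon/3$-argument then yields the theorem for the smooth class of data. For the general case, I would approximate arbitrary $F\in L^2(\mathcal F_T)$ by a sequence $F_n$ of smooth cylindrical functionals in $L^2$ and mollify $f$ similarly (using the remark after Assumption~\ref{A2} on density of Lipschitz integrands), then combine Theorem~\ref{thmstable} on the discrete side with~\eqref{2stable} on the continuous side to bound the error uniformly in $\pi$ by $L^2$-distances of the data; an in-probability $\to L^2(d_S^2)$ upgrade follows from uniform integrability of the $Y^{(\pi)}$-family, which in turn is propagated from the moment bounds~\eqref{W2} and \eqref{X2} through the stability inequality. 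The main obstacle I expect is the jump-martingale part of the inductive step: one must upgrade cross-variational convergence against continuous test functions into genuine $L^2(\nu(\td x)\td s)$-convergence of the integrand $\tilde Z^{(\pi),p+1}$, which is delicate because the limit filtration supports arbitrarily small jumps whereas the approximating walks concentrate mass at zero. The zero-jump condition~\eqref{eq:zero} and the control it yields through Lemma~\ref{finitejumps} and the stability bound~\eqref{eq:stable} are precisely what allow this transfer; without \eqref{eq:zero} there is no hope of bounding the $\tilde Z^{(\pi)}$ terms uniformly, and it is here that the whole scheme earns its $L^2$-stability.
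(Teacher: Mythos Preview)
Your overall architecture matches the paper's: Picard induction under a smoothness assumption (this is Proposition~\ref{lem:convpn2}), then $p\to\infty$ via the uniform-in-$\pi$ Picard estimate~\eqref{cor:picbsdde}, then a density argument anchored on the stability estimates of Theorem~\ref{thmstable} and~\eqref{2stable}. The truncation to $X^\epsilon$ that you introduce is unnecessary --- the paper works directly with $X$ --- but harmless.

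The gap is in the inductive step for $\tilde Z^{(\pi),p+1}$. You correctly flag this as the main obstacle, but the mechanism you name --- the zero-jump condition~\eqref{eq:zero}, Lemma~\ref{finitejumps} and the stability bound~\eqref{eq:stable} --- is not what closes it. Those tools enter only in establishing~\eqref{cor:picbsdde} and in the final density argument; they do not, and cannot, supply the uniform-in-$\pi$ $L^2(\nu\times\td s)$ control of $\tilde Z^{(\pi),p+1}$ that the cross-variation identification needs. What the paper uses instead is a regularity result, Lemma~\ref{lipschitz}: under the $C_b^\infty$ smoothness assumption on $H$ and on the drivers, one shows by differentiating the \emph{conditioned} BS$\Delta$E with respect to the frozen increment that $|\tilde Z^{(\pi),p}_s(x)|\le \bar K_p|x|$ uniformly over $\pi$ and $s$, and that $Y^{(\pi),p}$, $Z^{(\pi),p}$ are uniformly bounded. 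This pathwise Lipschitz bound is the substantive technical content of the smooth stage; once you have it, $(\tilde Z^{(\pi),p+1}(\omega,\cdot))_\pi$ is bounded in $L^2(\nu\times\td s)$ for a.e.\ $\omega$, so along subsequences it has a weak limit, the cross-variation convergence~\eqref{2temp1} identifies that limit as $\tilde Z^{\infty,p+1}$, the bracket convergence~\eqref{2one} gives convergence of the norms, and the standard Hilbert-space fact ``weak convergence plus norm convergence $\Rightarrow$ strong convergence'' yields~\eqref{3temp}. Without Lemma~\ref{lipschitz} there is no compactness anchor, and the stability bound is not a substitute at this point of the argument.
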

\begin{proof}
The idea, inspired by Briand~{\em et al.}~(2001,2002),
is to reduce
the question of weak convergence of the solutions of the BS$\Delta$Es to the solution of BSDE
to that of the Picard sequences
by using the fact that both the solutions of the BSDE and
of the BS$\Delta$Es are equal to limits of Picard sequences.

Define the sequence $(Y^{\infty,p}, Z^{\infty,p}, \tilde Z^{\infty,p})_{p\in\mbb N\cup\{0\}}$
recursively by $(Y^{\infty,0}, Z^{\infty,0}, \tilde Z^{\infty,0})=(0,0,0)$ and
\begin{eqnarray*}
Y^{\infty,p+1}_t := F + \int_t^T
f(s,Y^{\infty,p}_{s},Z^{\infty,p}_s,\tilde{Z}^{\infty,p}_s)\td s -\int_t^T Z^{\infty,p+1}_s \td W_s
-\int_{(t,T]\times \mathbb{R}^{d_2}\setminus\{0\}}\tilde{Z}^{\infty,p+1}_s (x)\tilde{N}(\td s\times \td x)
\end{eqnarray*}
for $p\in\mbb N\cup\{0\}$,
where $(Z^{\infty,p+1}, \tilde Z^{\infty,p+1})$ are the unique coefficients in
the martingale representation of the square-integrable martingale $N^p=\{N^p_t, t\in[0,T]\}$:
\begin{eqnarray}\nonumber
N^p_t &:=& \E\le[ F + \int_0^T
f(s,Y^{\infty,p}_{s-},Z^{\infty,p}_s,\tilde{Z}^{\infty,p}_s)\td s
\bigg|\mc F_t\ri] - \E\le[ F + \int_0^T
f(s,Y^{\infty,p}_{s-},Z^{\infty,p}_s,\tilde{Z}^{\infty,p}_s)\td s
\ri]\\
 &=& \int_0^t Z^{\infty,p+1}_s \td W_s +\int_0^t
\tilde{Z}^{\infty,p+1}_s \tilde{N}(\td s\times \td x).
\label{eq:Np}
\end{eqnarray}
Furthermore, recall that we denote
by $(Y^{(\pi),p}, Z^{(\pi),p}, \tilde Z^{(\pi),p}, M^{(\pi),p})_{p\in\mbb N\cup\{0\}}$
the Picard sequences corresponding to the BS$\Delta$Es defined on the grid $\pi$. In the remainder of the proof
we will deploy the continuous-time extensions of $(Y^{(\pi),p}, Z^{(\pi),p}, \tilde Z^{(\pi),p}, M^{(\pi),p})_{p\in\mbb N\cup\{0\}}$ defined by taking paths to be piecewise constant; we
denote these extensions also by $(Y^{(\pi),p}, Z^{(\pi),p}, \tilde Z^{(\pi),p}, M^{(\pi,p})_{p\in\mbb N\cup\{0\}}$.

In view of the decomposition
$$Y^{(\pi)}- Y = Y^{(\pi)}-Y^{(\pi),p} + Y^{(\pi),p}-Y^{\infty,p} + Y^{\infty,p}-Y$$
and the fact that $Y^{\infty,p}$ converges to $Y$ and $Y^{(\pi),p}$ to $Y^{(\pi)}$  in $\mathcal S^2$-norm
as $p\to\infty$ (see Tang \& Li (1994) and \eqref{cor:picbsdde} above, respectively), we have that the convergence of
$Y^{(\pi)}$ to $Y$ in the Skorokhod metric in $L^2$ will follow once we show
that  $Y^{(\pi),p}$ converges to $Y^{\infty,p}$ in the latter sense, for any fixed $p$:
\begin{lemma}\label{lem:convpn}
Let $p\in\mbb N$. Then we have
\begin{equation}
\E[d_S^2(Y^{(\pi),p}, Y^{\infty,p})] \to 0, \q\text{as $\Delta\to 0$}.
\end{equation}
\end{lemma}
To establish Lemma~\ref{lem:convpn} we first provide a proof in the case of `smooth' drivers and terminal
conditions (in Section~\ref{ssecsmooth}), and use subsequently density arguments to show that the convergence
carries over to the general case (in Section~\ref{ssecdens}).
\end{proof}

\subsection{The smooth case}
\label{ssecsmooth}

In order to show convergence we first restrict to the case that the terminal conditions and driver functions are bounded infinitely (Fr\'{e}chet-)differentiable functionals,
in the following sense
\begin{definition}\rm
 Let $\mathcal{H}$ be a Hilbert space. (i) A function $f:\mathcal{H}\to\mathbb{R}$ is differentiable if it is
 Fr\'echet-differentiable in every $l\in\mathcal{H}$, {\em i.e.}, there exists a bounded linear operator
 $A_l:\mathcal{H}\to\mathbb{R}$ satisfying
  $$ \lim_{h\to 0} \frac{f(h+l)-f(l)-A_l(h)}{|h|}=0  .$$
We set $D^{(1)}f(l)=A_l.$

(ii) A function $f:\mathcal{H}\to\mathbb{R}$ is
 $k$-times differentiable in $l$, $k\in\mbb N$, if there exists a bounded $k$-linear map $A_l:\mathcal{H}^k\to \mathbb{R}$
 such that for every
 $h_1,\ldots,h_{k-1}\in\mathcal{H}$
 $$ \lim_{h_k\to 0} \frac{D^{(k-1)}f(h_{k}+l)(h_1\ldots,h_{k-1})-D^{(k-1)}f(l)(h_1,\ldots,h_{k-1})-A_l(h_1,\ldots,h_{k})}{|h_k|}=0.$$

(iii) A function $f:\mathcal{H}\to\mathbb{R}$ is element of
$C_b^{\infty}(\mathcal{H})$ if all its higher derivatives are
bounded, {\em i.e.}, for every $k\in\mbb N$ there exists
$\bar{C}_k>0$ such that for all $h_i\in\mathcal{H}$
$$\sup_{l\in \mathcal{H}}|D^{(k)}H(l)(h_1,\ldots,h_k)|\leq \bar{C}_k \Prod_{i=1}^k|h_i|.$$
\end{definition}

Given these definitions the formulation of the smoothness condition that is in force throughout this subsection is as follows:
\begin{As}\label{as:smooth}\rm
(i) For some $k\in\mbb N$ and $H\in C_b^\infty(\mathbb R^{2k})$
the terminal conditions $F$ and $F^{(\pi)}$ are given by
\begin{equation*}
\begin{array}{ll}
F^{(\pi)} = H(W^{(\pi)}_{s_1},\ldots, W^{(\pi)}_{s_k}, {X}^{(\pi)}_{s_1},\ldots,{X}^{(\pi)}_{s_k}),\\
F = H(W_{s_1},\ldots, W_{s_k}, {X}_{s_1},\ldots,{X}_{s_k}),
\end{array}
\q \text{for some $s_1, \ldots, s_k\in[0,T]$}.
\end{equation*}
Moreover, $F^{(\pi)}$ converges to $F$ as $\Delta\to 0$ in $L^2(\mathbb P)$.

(ii) The drivers $f$ and $f^{(\pi)}$ satisfy $f(t,\cdot)\in
C_b^{\infty}( \mathbb{R}\times \mathbb{R}^{d_1}\times
L^2(\nu(\td x),\mathcal{B}(\mathbb{R}^{d_2}\setminus\{0\})))$ and
 $f^{(\pi)}(t,\cdot)\in
C_b^{\infty}(\mathbb{R}\times \mathbb{R}^{d_1}\times
L^2(\nu^{(\pi)}(\td x),\mathcal{B}(\mathbb{R}^{d_2}\setminus\{0\})))$
where, for each $k$, the $k$-th derivative of $f^{(\pi)}$ is bounded
uniformly in $t$ and $\Delta$, the mesh of $\pi$.
\end{As}

Under the smoothness conditions given in Assumption~\ref{as:smooth} the corresponding
Picard sequences obey a number of properties that play an important role
in the proof of Proposition~\ref{lem:convpn2}:

\begin{lemma}
\label{lipschitz} \label{tildez}
{\rm (i)} Let $p\in\mathbb{N}$. There exists a
constant $\bar{K}_p>0$ satisfying for all partitions $\pi$ \be \label{lipz}
|\tilde{Z}^{(\pi),p}_s(x)|\leq \bar{K}_p |x|\mbox{ for all
}x\in\mathbb{R}^{d_2}, \,\, s\in [0,T],\ee
where $\tilde{Z}^{(\pi),p}_s$ denotes a continuous version (in $x$).
Furthermore, $Y^{(\pi),p}$
and $Z^{(\pi),p}$ are uniformly bounded over partitions $\pi$.

{\rm (ii)} $\tilde{Z}^{\infty,p}$ are uniformly
Lipschitz-continuous in $x$, {\em i.e.},
there exists a constant $K'_p>0$
such that $|\tilde{Z}^{\infty,p}_t(x)| \leq K'_p |x|  $ for all
$x\in\mathbb{R}^{d_2}$ and every $t\in[0,T]$, where $\tilde{Z}^{\infty,p}_t$ denotes again a continuous version (in $x$).
\end{lemma}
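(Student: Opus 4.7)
The plan is to prove both parts by induction on the Picard index $p$, with a strengthened inductive hypothesis that also records how smoothly $Y^{(\pi),p}_{t_i}$ (respectively $Y^{\infty,p}_t$) depends on the increments of the driving noise. The base case $p=0$ is immediate since the initialised triples vanish identically.

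For the inductive step of part (i), suppose at level $p$ that $Y^{(\pi),p}$ and $Z^{(\pi),p}$ are uniformly bounded, that $|\tilde{Z}^{(\pi),p}_s(x)|\le\bar{K}_p|x|$, and moreover that $Y^{(\pi),p}_{t_i}$, viewed as a function of the past increments $(\Delta W^{(\pi)}_{t_j},\Delta X^{(\pi)}_{t_j})_{j<i}$, is Lipschitz in each individual increment with constants uniform in $\pi$. Assumption~\ref{as:smooth}(ii) gives that $f^{(\pi)}$ is bounded on bounded sets uniformly in $\pi$, and the Lipschitz bound on $\tilde{Z}^{(\pi),p}$ combined with the moment condition \eqref{XX1} gives $\|\tilde{Z}^{(\pi),p}_s\|^2_{L^2(\nu^{(\pi)})}\le\bar{K}_p^2\int|x|^2\nu^{(\pi)}(\td x)$ uniformly bounded. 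These inputs yield uniform boundedness of the driver values, hence via \eqref{Y2} at level $p+1$ and the boundedness of $H$, uniform boundedness of $Y^{(\pi),p+1}$. Composing the smooth $H$ and $f^{(\pi)}$ with the inductively-smooth inputs, the Lipschitz-in-increments property upgrades to level $p+1$ with constants $L^W_{p+1}$ (in the $\Delta W$-direction) and $\bar{K}_{p+1}$ (in the $\Delta X$-direction) that are again uniform in $\pi$.

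With this smoothness in hand the desired bounds follow from \eqref{z1}--\eqref{z2}. For $Z^{(\pi),p+1}$, decompose $Y^{(\pi),p+1}_{t_{i+1}}=Y^0+R$, where $Y^0:=Y^{(\pi),p+1}_{t_{i+1}}|_{\Delta W^{(\pi)}_{t_i}=0}$ is $\mathcal{F}^{(\pi)}_{t_i}$-measurable and $|R|\le L^W_{p+1}|\Delta W^{(\pi)}_{t_i}|$; then $\E_{t_i}[Y^0\,\Delta W^{(\pi)}_{t_i}]=0$ and $|\E_{t_i}[R\,\Delta W^{(\pi)}_{t_i}]|\le L^W_{p+1}\,\E_{t_i}[|\Delta W^{(\pi)}_{t_i}|^2]=L^W_{p+1}\Delta$ by \eqref{W}, which yields $|Z^{(\pi),p+1}_{t_i}|\le L^W_{p+1}$. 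For $\tilde{Z}^{(\pi),p+1}$, note that $\tilde{Z}^{(\pi),p+1}_{t_i}(0)=0$ by construction, so \eqref{z2} combined with the Lipschitz-continuity of $Y^{(\pi),p+1}_{t_{i+1}}$ in $\Delta X^{(\pi)}_{t_i}$ with constant $\bar{K}_{p+1}$ gives $|\tilde{Z}^{(\pi),p+1}_{t_i}(x)|\le\bar{K}_{p+1}|x|$; a measurable selection provides the continuous version.

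Part (ii) follows a parallel induction, now using the martingale representation \eqref{eq:Np} in place of \eqref{z1}--\eqref{z2}. The jump integrand $\tilde{Z}^{\infty,p+1}_t(x)$ encodes the effect of a size-$x$ jump at time $t$ in the terminal variable $\Phi_p:=F+\int_0^T f(s,Y^{\infty,p}_s,Z^{\infty,p}_s,\tilde{Z}^{\infty,p}_s)\,\td s$; Lipschitz dependence of $\Phi_p$ on this perturbation, inherited from the smoothness of $H$, of $f$, and of the level-$p$ Picard iterates, yields $|\tilde{Z}^{\infty,p+1}_t(x)|\le K'_{p+1}|x|$. A clean formalisation goes through the Malliavin jump-derivative and a Clark--Ocone-type representation for L\'{e}vy-driven martingales. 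The main obstacle throughout is bookkeeping: tracking how the various Lipschitz constants propagate through the Picard recursion while staying uniform in $\pi$. The uniform-in-$(t,\pi)$ bounds on the derivatives of $f^{(\pi)}$ in Assumption~\ref{as:smooth}(ii), together with the uniform bound on $\int|x|^2\nu^{(\pi)}(\td x)$ from \eqref{XX1}, are precisely the ingredients that close this induction.
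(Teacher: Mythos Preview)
Your overall strategy---induction on the Picard index $p$ with a strengthened hypothesis tracking smoothness of $Y^{(\pi),p}$ in the past increments---matches the paper's. However, the specific inductive hypothesis you propose (first-order Lipschitz continuity of $Y^{(\pi),p}_{t_i}$ in each past increment) is too weak to close the induction, and this is a genuine gap.

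The problem appears when you propagate Lipschitz-in-increments from level $p$ to level $p{+}1$. The driver terms $f^{(\pi)}(t_j,Y^{(\pi),p}_{t_j},Z^{(\pi),p}_{t_j},\tilde Z^{(\pi),p}_{t_j})$ for $j>i$ depend on $Z^{(\pi),p}_{t_j}$ and on $\tilde Z^{(\pi),p}_{t_j}$ through its $L^2(\nu^{(\pi)})$-norm, so you need these to be Lipschitz in the earlier increment $\Delta W^{(\pi)}_{t_i}$ uniformly in $\pi$. But from first-order information alone this fails: if all you know is $|Y^{(\pi),p}_{t_{j+1}}(w,\cdot)-Y^{(\pi),p}_{t_{j+1}}(w',\cdot)|\le L|w-w'|$, then \eqref{z1} gives only
\[
|Z^{(\pi),p}_{t_j}(w)-Z^{(\pi),p}_{t_j}(w')|\le L|w-w'|\,\Delta^{-1}\E\big[|\Delta W^{(\pi)}_{t_j}|\big]\le L|w-w'|\,\Delta^{-1/2},
\]
which blows up as $\Delta\to 0$. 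Likewise, the pointwise bound $|\delta\tilde Z^{(\pi),p}_{t_j}(x)|\le 2L|w-w'|$ yields $\|\delta\tilde Z^{(\pi),p}_{t_j}\|^2_{L^2(\nu^{(\pi)})}\le 4L^2|w-w'|^2\,\nu^{(\pi)}(\mathbb R^{d_2}\setminus\{0\})$, again not uniform in $\pi$. What is actually needed is \emph{mixed second-order} regularity of $Y^{(\pi),p}_{t_{j+1}}$ in the pairs $(\Delta W^{(\pi)}_{t_i},\Delta W^{(\pi)}_{t_j})$ and $(\Delta W^{(\pi)}_{t_i},\Delta X^{(\pi)}_{t_j})$, so that your own centring trick can be applied once more.

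This is why the paper invokes the full $C^\infty_b$ hypothesis of Assumption~\ref{as:smooth}: it carries through the induction uniform bounds on \emph{all} higher $(w,x)$-derivatives of the conditioned processes $Y^{(\pi),p,w,x}$, $Z^{(\pi),p,w,x}$, $\tilde Z^{(\pi),p,w,x}$. The order-$l$ bound on $Z^{(\pi),p}$ requires the order-$(l{+}1)$ bound on $Y^{(\pi),p}$, so each Picard step consumes one order of smoothness; $C^\infty_b$ is exactly what lets the recursion run for arbitrary $p$. Your write-up should either track all orders as the paper does, or at minimum carry derivatives up to order $p{+}1$ and show they propagate. (A small separate point: your $Y^0:=Y^{(\pi),p+1}_{t_{i+1}}|_{\Delta W^{(\pi)}_{t_i}=0}$ is not $\mathcal F^{(\pi)}_{t_i}$-measurable since it still depends on $\Delta X^{(\pi)}_{t_i}$; the conclusion $\E_{t_i}[Y^0\,\Delta W^{(\pi)}_{t_i}]=0$ is nevertheless correct by the independence of $W^{(\pi)}$ and $X^{(\pi)}$.)
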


Given these properties, which  proof is given in the Appendix, we show the convergence of
$Y^{(\pi)}$ as stated in Lemma~\ref{lem:convpn} and in addition
the convergence in mean-square of the triplet
$(Z^{(\pi)},\tilde Z^{(\pi)},M^{(\pi)})$ to $(Z,\tilde Z, 0)$:
\begin{prop}\label{lem:convpn2}
For any $p\in\mbb N$ we have as $\Delta\searrow0$
\begin{eqnarray}
\label{1}&& \E[d_S^2(Y^{(\pi),p}, Y^{\infty,p})] \to 0,\\
\label{2}&& \E\le[\int_0^T\le\{ |
Z^{(\pi),p}_s - Z^{\infty,p}_s |^2 +
\int_{
\mathbb{R}^{d_2}\setminus\{0\}}| \tilde{Z}^{(\pi),p}_s(x) -
\tilde{Z}^{\infty,p}_s(x) |^2\nu(\td x)\ri\} \td s + |M^{(\pi),p}_T|^2\ri] \to 0.
\end{eqnarray}
\end{prop}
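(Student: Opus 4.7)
The plan is by induction on $p$. The base case $p=0$ is immediate since every quantity involved vanishes. For the inductive step, assume both assertions hold at index $p-1$, and introduce the two square-integrable martingales
\begin{align*}
 m^{(\pi)}_t &:= \E_t\le[F^{(\pi)} + \sum_{t_j\in\pi} f^{(\pi)}(t_j, Y^{(\pi),p-1}_{t_j}, Z^{(\pi),p-1}_{t_j}, \tilde Z^{(\pi),p-1}_{t_j})\Delta\ri],\\
 m_t &:= \E\le[F + \int_0^T f(s, Y^{\infty,p-1}_s, Z^{\infty,p-1}_s, \tilde Z^{\infty,p-1}_s)\td s \,\bigg|\,\mc F_t\ri].
\end{align*}
The first key step is to show $m^{(\pi)}_T\to m_T$ in $L^2$. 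Via the triangle inequality and the uniform Lipschitz continuity of $f^{(\pi)}$, this reduces to (i) $F^{(\pi)}\to F$ in $L^2$ (Assumption~\ref{as:smooth}), (ii) the inductive $L^2$-convergence of $(Y^{(\pi),p-1},Z^{(\pi),p-1},\tilde Z^{(\pi),p-1})$, (iii) the equivalence between the $\nu^{(\pi)}$- and $\nu$-weighted $L^2$-norms on the class of uniformly Lipschitz-in-$x$ functions (via Lemma~\ref{sw}(ii) combined with the Lipschitz bounds of Lemma~\ref{tildez}), and (iv) the pointwise convergence $f^{(\pi)}\to f$ from Assumption~\ref{A2}(iii). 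Invoking Proposition~\ref{thm:ewx} and Remark~\ref{martconv} then upgrades this $L^2$-convergence of the terminal values to extended weak convergence $(m^{(\pi)},\mc F^{(\pi)})\stackrel{w}{\to}(m,\mc F)$.

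Next I would apply Theorem~\ref{coro12} to obtain $\<m^{(\pi)}\>_T\to\<m\>_T$ in probability, and via the $L^2$-convergence of $m^{(\pi)}_T$ lift this to $\E[\<m^{(\pi)}\>_T]\to\E[\<m\>_T]$. Corollary~\ref{cor:cv}(i) yields $\<W^{(\pi)},m^{(\pi)}\>_\cdot\to\<W,m\>_\cdot$ in probability in $J_1$; writing $m^{(\pi)}$ via the representation~\eqref{fwrep} and using~\eqref{W}, one identifies $\<W^{(\pi)},m^{(\pi)}\>_{t_i}=\sum_{t_j<t_i}Z^{(\pi),p}_{t_j}\Delta$, while the Brownian martingale representation yields $\<W,m\>_t=\int_0^t Z^{\infty,p}_s\td s$. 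Combined with the uniform bound on $Z^{(\pi),p}$ from Lemma~\ref{tildez}(i) and a density argument over simple test functions, this produces the weak $L^2([0,T]\times\Omega)$-convergence $Z^{(\pi),p}\to Z^{\infty,p}$. An analogous application of Corollary~\ref{cor:cv}(ii) to test functionals $\bar Z$ smooth, bounded, and zero near $x=0$, together with the bound $|\tilde Z^{(\pi),p}_s(x)|\le\bar K_p|x|$ and the tightness estimate~\eqref{eq:et0}, yields weak convergence of $\tilde Z^{(\pi),p}$ to $\tilde Z^{\infty,p}$ against such test functionals.

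The strong statement in~\eqref{2} together with $\E[|M^{(\pi),p}_T|^2]\to 0$ is then obtained by exploiting the orthogonal decomposition
\begin{align*}
 \<m^{(\pi)}\>_T = \sum_j|Z^{(\pi),p}_{t_j}|^2\Delta + \sum_j\Bigl(\E_{t_j}[|\tilde Z^{(\pi),p}_{t_j}(\Delta X^{(\pi)}_{t_j})|^2]-|\E_{t_j}[\tilde Z^{(\pi),p}_{t_j}(\Delta X^{(\pi)}_{t_j})]|^2\Bigr) + \<M^{(\pi),p}\>_T,
\end{align*}
to be matched against $\<m\>_T=\int_0^T|Z^{\infty,p}_s|^2\td s+\int_0^T\!\int|\tilde Z^{\infty,p}_s(x)|^2\nu(\td x)\td s$. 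Using~\eqref{XX0} and Lemma~\ref{tildez}(i), the middle sum coincides with $\sum_j\Delta\int|\tilde Z^{(\pi),p}_{t_j}(x)|^2\nu^{(\pi)}(\td x)$ up to a vanishing error. Weak lower-semicontinuity of $L^2$-norms bounds each of the first two terms from below in the limit by the corresponding continuous-time expression, while $\<M^{(\pi),p}\>_T\ge 0$; since the expectation of the sum converges to $\E[\<m\>_T]$, all three inequalities are forced to be equalities. This yields simultaneously $\E[|M^{(\pi),p}_T|^2]\to 0$ and norm-convergence of $Z^{(\pi),p}$ and $\tilde Z^{(\pi),p}$ to their limits; combining weak convergence with norm convergence in the Hilbert space gives strong $L^2$-convergence via the Radon--Riesz property, which is precisely~\eqref{2}. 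Finally,~\eqref{1} follows from $Y^{(\pi),p}_t=m^{(\pi)}_t-\sum_{t_j\le t}f^{(\pi)}(\cdots)\Delta$ (and the analogous continuous-time decomposition) by combining the extended-weak/$J_1$-convergence of $m^{(\pi)}$ with the uniform $L^2$-convergence of the drift sums, and by invoking the $L^\infty$-bound on $Y^{(\pi),p}$ from Lemma~\ref{tildez}(i) to upgrade from convergence in probability to convergence in $L^2$.

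The main obstacle is the norm-convergence transfer in the last step: the quadratic-variation identification naturally pairs $\tilde Z^{(\pi),p}$ against the moving measure $\nu^{(\pi)}$, whereas~\eqref{2} demands convergence in $L^2(\nu)$, and bridging these two Hilbert-space structures via the uniform Lipschitz-in-$x$ bound of Lemma~\ref{tildez}(i) together with the measure-convergence statements in Lemma~\ref{sw} is the technical crux of the argument.
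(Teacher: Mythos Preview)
Your proposal is correct and follows essentially the same route as the paper: induction on $p$, $L^2$-convergence of the terminal martingale values $m^{(\pi)}_T\to m_T$ via the Lipschitz estimate and the induction hypothesis, extended weak convergence via Proposition~\ref{thm:ewx} and Remark~\ref{martconv}, identification of the cross-variations through Corollary~\ref{cor:cv}, convergence of the predictable quadratic variation via Theorem~\ref{coro12}, and finally the Hilbert-space trick (weak convergence $+$ convergence of norms $\Rightarrow$ strong convergence) to obtain~\eqref{2}, with~\eqref{1} then read off from the decomposition $Y^{(\pi),p}=m^{(\pi)}-\text{(drift)}$ together with the uniform $L^\infty$-bound of Lemma~\ref{tildez}(i).

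The one organisational difference worth noting is in how the Radon--Riesz step is executed. You work directly in the product Hilbert space $L^2(\td s\times\td\mathbb P)\times L^2(\nu(\td x)\times\td s\times\td\mathbb P)$ and invoke weak lower-semicontinuity of the norm there; this requires you to identify the weak limit of $(Z^{(\pi),p},\tilde Z^{(\pi),p})$ in that full product space, which in turn needs testing against random step functions of the form $1_{[0,t]}(s)\,1_A(\omega)\,\bar Z_s(x)$. This is legitimate---boundedness in $L^2$ (from Lemma~\ref{tildez}(i)) gives weak subsequential limits, and the pathwise convergences from Corollary~\ref{cor:cv} upgraded to $L^1$ by uniform boundedness suffice to pin down the limit on such a dense class---but it is a point you glide over. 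The paper sidesteps this by first passing to a subsequence along which all the pathwise convergences~\eqref{2one}--\eqref{2temp1} hold almost surely, then carrying out the weak-plus-norm argument $\omega$-by-$\omega$ in the smaller (deterministic) Hilbert space $L^2_{d_1}(\td s)\times L^2(\nu(\td x)\times\td s)$, and finally invoking uniform integrability (via the $L^\infty$-bounds and de~la~Vall\'ee--Poussin) to pass back to $L^1$-convergence. Both arguments are sound; the paper's $\omega$-wise version is slightly more self-contained, while yours is more direct once the weak convergence in the full product space is justified. You also correctly flag the $\nu^{(\pi)}$-to-$\nu$ transfer as the technical crux, which the paper handles through Lemma~\ref{sw}(ii) in the proof of~\eqref{2one}.
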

\begin{proof} The proof is based on an induction with respect to $p$.
We note that the assertions are trivially satisfied for $p=0$. Assuming that the assertion is
satisfied for a certain $p$ we show next that \eqref{1} and \eqref{2} are satisfied for $p+1$.

{\em Proof of ~\eqref{1} with $p$ replaced by $p+1$:}
In view of the uniform Lipschitz continuity of the driver functions $f^{(\pi)}$
and since these are piecewise constant
we have
\begin{eqnarray}
\nonumber
\lefteqn{ \limsup_{\Delta\to 0}\sup_{t\in[0,T]}
\Big| \sum_{j: t_j\in\pi\cap[0,t]}
f^{(\pi)}(t_j,Y^{(\pi),p}_{t_j},Z^{(\pi),p}_{t_j},\tilde{Z}^{(\pi),p}_{t_j})\Delta -
\int_0^t
f(s,Y^{\infty,p}_{s},Z^{\infty,p}_s,\tilde{Z}^{\infty,p}_s) \td s
\Big|}\\ \nonumber
&\leq& \limsup_{\Delta\to 0} \int_{0}^T
|f^{(\pi)}(s,Y^{(\pi),p}_{s},Z^{(\pi),p}_s,\tilde{Z}^{(\pi),p}_s)
-f^{(\pi)}(s,Y^{\infty,p}_{s},Z^{\infty,p}_s,\tilde{Z}^{\infty,p}_s)|\td s\\ \nonumber
&\phantom{=}& + \limsup_{\Delta\to 0}\int_0^T
|f^{(\pi)}(s,Y^{\infty,p}_{s},Z^{\infty,p}_s,\tilde{Z}^{\infty,p}_s)-
f(s,Y^{\infty,p}_{s},Z^{\infty,p}_s,\tilde{Z}^{\infty,p}_s)|\td s\\
&\leq&
\limsup_{\Delta\to 0}
K\Big(\int_0^T\Big[|Y^{(\pi),p}_s-Y^{\infty,p}_s|+
|Z^{(\pi),p}_s-Z^{\infty,p}_s|+\sqrt{\E_{\nu^{(\pi)}}([\tilde{Z}^{(\pi),p}_s(\xi)
-\tilde{Z}^{\infty,p}_s(\xi)]^2)}\Big]\td s\Big),  \label{estK}
\end{eqnarray}
where in the third line the limsup vanishes in view of Assumption~\ref{A2} and Lemma~\ref{lipschitz}(ii).
Using Lemmas \ref{sw} and~\ref{lipschitz} we find for any $s\in[0,T]$
\begin{eqnarray}
\lim_{\Delta\to 0} \E_{\nu^{(\pi)}}([\tilde{Z}^{(\pi),p}_s(\xi)
-\tilde{Z}^{\infty,p}_s(\xi)]^2)
=  \lim_{\Delta\to 0} \int_{\mathbb{R}^{d_2}\setminus\{0\}}|\tilde{Z}^{(\pi),p}_s(x)
-\tilde{Z}^{\infty,p}_s(x)|^2\nu(\td x). \label{estK2}
\end{eqnarray}
The induction assumption implies that the right-hand sides of \eqref{estK} and \eqref{estK2}
are equal to zero, where the limits are in $L^2$.
By combining the convergence in $\mathcal H^2$-norm of the drivers and the extended convergence in
Proposition~\ref{thm:ewx} (see also the remark after Definition~\ref{def:extco})
 we find that as $\Delta\searrow 0$
\begin{multline*}
m^{(\pi)}_t:=\E\le[F^{(\pi)}+\sum_{j: t_j\in\pi}
f^{(\pi)}(t_j,Y^{(\pi),p}_{t_j}, Z^{(\pi),p}_{t_j},\tilde{Z}^{(\pi),p}_{t_j})\Delta
\Bigg|\F^{(\pi)}_t\ri] \\  \q\q\longrightarrow m_t:= \E\le[F+\int_{0}^T f(s,Y^{\infty,p}_{s},
Z^{\infty,p}_s,\tilde{Z}^{\infty,p}_s) \td s \Bigg|\F_t\ri],
\end{multline*}
and as a consequence also
\begin{multline*}
Y^{(\pi),p+1}_t = m^{(\pi)}_t - \sum_{j: t_j\in\pi\cap[0,t]}
f^{(\pi)}(t_j,Y^{(\pi),p}_{t_j}, Z^{(\pi),p}_{t_j},\tilde{Z}^{(\pi),p}_{t_j})\Delta
\to Y^{\infty,p+1}_t = m_t - \int_0^tf(s,Y^{\infty,p}_s, Z^{\infty,p}_s, \tilde Z^{\infty,p}_s)\td s,
\end{multline*}
where the convergence is in probability in Skorokhod $J_1$-topology.

As $Y^{(\pi),p+1}$ is uniformly bounded over partitions $\pi$ (Lemma~\ref{lipschitz}(i)),
we deduce that $\E[d^2_S(Y^{(\pi),p+1}, Y^{\infty,p+1})]$ tends to zero as $\Delta\to 0$,
so that \eqref{1} holds with $p$ replaced by $p+1$.

{\em Proof of ~\eqref{2} with $p$ replaced by $p+1$:} The argument consists of a number of steps that are listed
in the following auxiliary result:
\begin{lemma}\label{lem:aux} The following convergence holds in the supremum norm in probability as $\Delta\to 0$:
\begin{eqnarray}\label{2one}
&&\q \int_{0}^{\cdot}|Z^{(\pi),p+1}_s|^2 \td s +
\int_{[0,\cdot]\times \mathbb{R}^{d_2}\setminus\{0\}}
|\tilde{Z}^{(\pi),p+1}_s(x)|^2
\nu(\td x)\td s+\<M^{(\pi),p+1}\>_{\cdot}\\ \nonumber
&& \quad\quad\quad \longrightarrow \int_{0}^{\cdot} |Z^{\infty,p+1}_s|^2\td s +
\int_{[0,\cdot]\times \mathbb{R}^{d_2}\setminus\{0\}}
|\tilde{Z}^{\infty,p+1}_s(x)|^2 \nu(\td x)\td s,
\\
&&\q \int_{0}^{\cdot} Z^{(\pi),p+1}_s
\td s \longrightarrow \int_{0}^{\cdot} Z^{\infty,p+1}_s \td s,
\label{2post2}\\
\label{2temp1}
&&\q \int_{[0,\cdot]\times \mathbb{R}^{d_2}\setminus\{0\}}
\tilde{Z}^{(\pi),p+1}_s(x)\bar{Z}_s(x) \nu(\td x)\td s \longrightarrow
\int_{[0,\cdot]\times
\mathbb{R}^{d_2}\setminus\{0\}}\tilde{Z}^{\infty,p+1}_s(x)\bar{Z}_s(x)\nu(\td x)\td s,
\end{eqnarray}
for any function
$\bar{Z}:[0,T]\times\mathbb{R}^{d_2}\setminus\{0\}\to\mathbb{R}$
that is bounded, jointly continuous, and zero in an
environment around zero. Furthermore, we have the following convergence in $L^1$:
 \begin{eqnarray}
&&\q \int_0^T\le\{|Z^{(\pi),p+1}_s -Z^{\infty,p+1}_s |^2+\int_{\mathbb{R}^{d_2}\setminus\{0\}}
|\tilde{Z}^{(\pi),p+1}_s(x)-\tilde{Z}^{\infty,p+1}_s(\omega,x)|^2 \nu(\td x)\ri\}\td s + |M^{(\pi),p+1}_T|^2 \longrightarrow 0, \label{3temp}
\end{eqnarray}
\end{lemma}
It follows from \eqref{3temp} that \eqref{2} is valid with $p$ replaced by $p+1$, and thus
the proof of the proposition is complete.
\end{proof}
\begin{proof}[Proof of Lemma~\ref{lem:aux}:] The proof is given in four parts (corresponding to the different equations):

{\it Proof of ~\eqref{2one}:} We show that the assertion follows from
the convergence of the compensators of the martingales $L^{(\pi)}$, defined by
\begin{equation}\label{eq:Lpi}
L^{(\pi)}_t = m^{(\pi)}_t - Y_0^{(\pi),p+1},
\end{equation}
 to
the compensator of the martingale $L=\{L_t = m_t-Y_0^{\infty,p+1}\}$,
by verifying that the conditions of Theorem~\ref{coro12} are satisfied.
We first show
\begin{equation}\label{eq:LL2}
\E[d^2_S(L^{(\pi)}, L)]\to 0,\q\text{as $\Delta\to0$.}
\end{equation}
Since the
processes $L^{(\pi)}$ converge to $L$ in probability in the Skorokhod $J_1$-topology
(see the end of the proof of ~\eqref{1}),
the convergence in \eqref{eq:LL2} follows by the lemma de la Vall\'{e}e-Poussin
and the fact that the collection $(L^{(\pi)})_{\pi}$ is uniformly bounded, as
\begin{eqnarray}
\label{dela}
\sup_\pi \|L_T^{(\pi)}\|_\infty &\leq& \sup_{\pi}  \|F^{(\pi)}\|_\infty+\sup_{\pi} |Y^{(\pi)}_0|+T\sup_{\pi, t}|f^{(\pi)}(t,0,0,0)|+ B^{(\pi)}\\ \nonumber
&\phantom{=}& + KT\Big(\sup_{\pi}
\left\|\sup_t|Y^{(\pi),p}_t|\right\|_\infty+\sup_\pi
\left\|\sup_t|Z^{(\pi),p}_t|\right\|_\infty\Big),
 \end{eqnarray}
where $||\cdot||_\infty$ denotes the essential-supremum norm
and where by Jensen's inequality, the independence of increments and the conditions \eqref{XX1} and~\eqref{X2}, we have
\begin{eqnarray*}
B^{(\pi)} &:=&  \sum_i \Delta
K\bar{K}_{p+1}\sqrt{ \int_{\mbb R^{d_2}\backslash\{0\}} |x|^2 \nu^{(\pi)}(\td x)}
\leq \sqrt{T}K \bar{K}_{p+1}  \sqrt{\sum_i
 \int_{\mbb R^{d_2}\backslash\{0\}} |x|^2 \nu^{(\pi)}(\td x)\Delta}
 \\
&=& \sqrt{T}K\bar{K}_{p+1} \sqrt{\E[|X_T^{(\pi)}|^2]}
\to \sqrt{T}K\bar{K}_{p+1} \sqrt{\E[|X_T|^2]}.
\nonumber
 \end{eqnarray*}

Since we have the convergence of $L^{(\pi)}_T$ to $L_T$ in $L^2$ (from \eqref{eq:LL2})
and the extended convergence $(L^{(\pi)},\mc F^{(\pi)})\to (L,\mc F)$
(from \eqref{eq:LL2}, Proposition~\ref{thm:ewx} and Remark \ref{martconv}) it follows from Theorem \ref{coro12} that $(\<L^{(\pi)},L^{(\pi)}\>_t)$
converges to $(\<L,L\>_t)$ in probability in the Skorokhod $J_1$-topology.
By the orthogonality of the martingales $M^{(\pi)}$, $W^{(\pi)}$ and the point process induced by $X^{(\pi)}$
on the one hand and the orthogonality 
of $W$ and $\tilde N$ on the other hand we find
\begin{eqnarray}\nonumber
&&\!\!\!\!\!\!\!\!\sum_{t_i\in\pi\backslash\{T\}\cap [0,\cdot]}\le\{|Z^{(\pi),p+1}_{t_i}|^2\Delta
+ \E_{t_i}[|\tilde{Z}^{(\pi),p+1}_{t_i}(\Delta X^{(\pi)}_{t_i})|^2]\ri\}
- \sum_{t_i\in\pi\backslash\{T\}\cap [0,\cdot]} \Big|\E_{t_i}\le[\tilde{Z}^{(\pi),p+1}_{t_i}(\Delta X^{(\pi)}_{t_i})\ri]\Big|^2
+\<M^{(\pi),p+1}\>_{\cdot}\\&& \q\q\q\q\q\q
\longrightarrow\int_{0}^{\cdot} |Z^{\infty,p+1}_s|^2 \td s +\int_{[0,\cdot]\times \mathbb{R}^{d_2}\setminus\{0\}}
|\tilde{Z}^{\infty,p+1}_s(x)|^2 \nu(\td x)\td s \label{eq:compc}
\end{eqnarray}
in the supremum norm in
probability. In this display we note that the second sum vanishes as $\Delta$ tends to zero.
More specifically, in view of Lemma \ref{lipschitz} and the condition in ~\eqref{XX0} we have
\begin{equation}\label{eq:sumc}
\sum_{t_i\in\pi\backslash\{T\}}
 \Big|\E_{t_i}\le[\tilde{Z}^{(\pi),p+1}_{t_i}(\Delta X^{(\pi)}_{t_i})\ri]\Big|^2
\leq \bar{K}_{p+1}^2
\sum_{t_i\in\pi\backslash\{T\}}
\le|\E_{t_i}\le[|\Delta X^{(\pi)}_{t_i}|\ri]\ri|^2 \to 0
\end{equation}
as $\Delta$ tends to zero, where we used that $\E_{t_i}[|\Delta X^{(\pi)}_{t_i}|] = \E[|\Delta X^{(\pi)}_{t_i}|]$ by the independence of the increments of $X^{(\pi)}$.
The assertion in ~\eqref{2one} follows by combining~\eqref{eq:compc} and \eqref{eq:sumc}
with the fact that $Z^{(\pi),p+1}_s$ is piecewise constant as function of  $s$ and with
Lemma~\ref{sw}(ii), which is applicable as
$(\tilde Z^{(\pi),p+1})_\pi$ is uniformly Lipschitz-continuous.

{\it Proof of ~\eqref{2post2}:}
It follows from Corollary~\ref{cor:cv}(i) and the representation \eqref{fwrep} of the square-integrable martingale $L^{(\pi)}$ defined in \eqref{eq:Lpi}
that as $\Delta\to0$
\begin{eqnarray*}
\< W^{(\pi)}, L^{(\pi)}\>_{\cdot} = \sum_{i=0}^{\lfloor \cdot N\rfloor-1}|Z^{(\pi),p+1}_{t_i}|\Delta
\to  \<W,L\>_{\cdot}=\int_{0}^{\cdot} Z^{\infty,p+1}_s \td s,
\end{eqnarray*}
in the supremeum norm in probability which implies the assertion in ~\eqref{2post2}.

{\it Proof of ~\eqref{2temp1}:}
We conclude from Corollary~\ref{cor:cv}(ii) and the representation of the martingale $L^{(\pi)}$
\begin{align*}
& \lim_{\Delta\to 0}\sum_{t_i\in\pi\backslash\{T\}\cap [0,\cdot]}\left\{ \E_{t_i}[\tilde{Z}^{(\pi),p+1}_{t_i}(\Delta X^{(\pi)}_{t_i})\bar{Z}_{t_i}(\Delta X^{(\pi)}_{t_i})]
- \E_{t_i}[\tilde{Z}^{(\pi),p+1}_{t_i}(\Delta X^{(\pi)}_{t_i})] \E_{t_i}[\bar{Z}_{t_i}(\Delta X^{(\pi)}_{t_i})]\ri\}\\
&\q\q\q \to \int_{[0,\cdot]\times \mathbb{R}^{d_2}\setminus\{0\}}
\tilde{Z}^{\infty,p+1}_s(x)\bar{Z}_s(x) \nu(\td x)\td s,\end{align*}
in probability in the Skorokhod $J_1$-topology. As the limit is continuous, this convergence
also holds in the supremum norm. Moreover, as $\bar{Z}$ is bounded,
continuous, and zero in an environment around zero, it is clear that
there exists $\hat{K}>0$ such that $|\bar{Z}_s(x)|\leq \hat{K}
|x|$. It follows then from Lemma~\ref{sw}(ii) that we have the convergence in
\eqref{2temp1} in the supremum norm in probability.

{\em Proof of ~\eqref{3temp}:}
Next let us switch to a
subsequence and assume that all previous convergence results derived in the proofs of \eqref{2one}--\eqref{2temp1}
hold for a.e.~$\omega\in\Omega$. Fix such an $\omega\in\Omega$. By Lemma \ref{lipschitz} there
exists constants $\bar{K}_{p+1}>0$ such that
$$\sup_{\pi} \int_{[0,T]\times \mathbb{R}^{d_2}\setminus\{0\}}\Big|\tilde{Z}^{(\pi),p+1}_s(\omega,x)\Big|^2\nu(\td x)\td s\leq
 \bar{K}^2_{p+1}\int_{[0,T]\times \mathbb{R}^{d_2}\setminus\{0\}}|x|^2\nu(\td x)\td s=
T\bar{K}^2_{p+1}\int_{\mathbb{R}^{d_2}\setminus\{0\}}|x|^2\nu(\td x)<\infty. $$
Hence,
$\tilde{Z}^{(\pi),p+1}_s(\omega,x)$ is
 uniformly
bounded in $L^{2}(\nu(\td x)\times \td s)$. By switching to a
subsequence, we may assume that $\tilde{Z}^{(\pi),p+1}_s(\omega,x)$
converges weakly in $L^2(\nu(\td x)\times
\td s,\mathcal{B}(\mathbb{R}^{d_2}\setminus\{0\})\otimes
\mathcal{B}([0,T]))$ to a limiting function. From (\ref{2temp1})
it follows that this limit is equal to $\tilde{Z}^{\infty,p+1}_\cdot(\omega,\cdot)$.
Furthermore, by (\ref{2post2}) we also know that for a.e.~$\omega$ we have that $Z^{(\pi),p+1}_{\cdot}(\omega)$ converges
weakly to $Z^{\infty,p+1}_{\cdot}(\omega)$ in $L^2_{d_1}(\td s).$
We also have that the pairs
$(Z^{n,p+1}(\omega),\tilde{Z}^{n,p+1}(\omega))$ converge
weakly $(Z^{\infty,p+1}(\omega),\tilde{Z}^{\infty,p+1}(\omega))$
in $L^2_{d_1}(ds)\times
L^2(\nu(dx)\times ds)$
equipped with the inner product
$$\<(z^1,\tilde{z}^1),(z^2,\tilde{z}^2)\>_*=\int_0^T z^1_s z^2_s \td s+\int_{[0,T]\times \mathbb{R}^{d_2}\setminus\{0\}}\tilde{z}^1_s(x)
\tilde{z}^2_s(x) \nu(\td x)\td s.$$
Denoting by $\|\cdot\|_*$ the norm
associated to this inner product
we have by (\ref{2one})
\begin{align*} \limsup_{\Delta\to 0}
\|I_\Delta\|^2_*&:=\limsup_{\Delta\to 0}\left\|(Z^{(\pi),p+1}(\omega),\tilde{Z}^{(\pi),p+1}(\omega))\right\|^2_*
\leq
\left\|(Z^{\infty,p+1}(\omega),\tilde{Z}^{\infty,p+1}(\omega))\right\|_*^2=:||I||_*^2.
\end{align*}
Therefore, we have
$$0\leq \limsup_{\Delta\to 0}\<I_\Delta-I,I_\Delta-I\>=\limsup_{\Delta\to 0} (\<I_\Delta,I_\Delta\>-2\<I,I_\Delta\>+\<I,I\>)\leq
\<I,I\>-2\<I,I\>+\<I,I\>=0 .$$
Hence, all inequalities must be equalities and we get that
$$\int_0^T|Z^{(\pi),p+1}_s(\omega) -Z^{\infty,p+1}_s(\omega) |^2\td s+
\int_{[0,T]\times \mathbb{R}^{d_2}\setminus\{0\}}
|\tilde{Z}^{(\pi),p+1}_s(\omega ,x)-\tilde{Z}^{\infty,p+1}_s(\omega,x)|^2 \nu(\td x)\td s\to 0
\mbox{ as $\Delta\to0$}.$$
By (\ref{2one}) it follows that also $|M^{(\pi),p+1}_t(\omega)|^2$ converges to zero as well.
Therefore, for a.e.~$\omega$, for any subsequence on the
left-hand side in (\ref{3temp}) we can find a subsubsequence
converging to zero. Thus, we must have convergence in
probability in (\ref{3temp}). We note that, for any $p$, $(M_T^{(\pi),p})^2$
is uniformly integrable over partitions $\pi$, since we have the bound
$$
\sup_\pi \E[\langle M^{(\pi),p}, M^{(\pi),p}\rangle_T^2]
\leq \sup_\pi \E[\langle L^{(\pi),p}, L^{(\pi),p}\rangle_T^2]
\leq \sup_\pi \bar{C}\|L^{(\pi),p}_T\|^4_\infty < \infty, \mbox{ for a }\bar{C}>0,
$$
which follow by the definitions of $M^{(\pi),p}$ and $L^{(\pi),p}$,
the BDG and Doob inequalities, and the fact that
$L_T^{(\pi)}$ is bounded uniformly in $\pi$ (by ~\eqref{dela}).

That the convergence in ~\eqref{3temp} also holds true in $L^1$ may be seen from another application of the Lemma de la Vall\'{e}e-Poussin, which is applicable as
the integral on the left-hand side is bounded uniformly in $\pi$ (by Lemma \ref{lipschitz}),
in combination with the uniform integrability of $(M_T^{(\pi),p+1})^2$.
\end{proof}
\subsection{Density argument}\label{ssecdens}
We complete the proof of Theorem~\ref{thm} by combining
Proposition~\ref{lem:convpn2} with a density argument.

\begin{proof}[Proof of Theorem~\ref{thm}]
Let $k\in\mathbb N$ be arbitrary. By standard density results
we can find functions $H_{k}\in C_b^\infty( \mathbb{R}^{2k})$ and uniformly
$K$-Lipschitz-continuous functions $f^k$ and $f^{(\pi),k}$ such
that $f^{k}(t,\cdot)\in C_b^{0,\infty}( \mathbb{R}\times
\mathbb{R}^{d_1}\times L^2(\nu(\td x))$, and $f^{(\pi),k}(t,\cdot)\in
C_b^{0,\infty}( \mathbb{R}\times \mathbb{R}^{d_1}\times
L^2(\nu^{(\pi)}(\td x))$ converging to $f^k$ with
\begin{eqnarray}\nonumber
\lefteqn{T \sup_{t,y,z,\tilde{z}}(|f(t,y,z,\tilde{z})-f^k(t,y,z,\tilde{z})| + |f^{(\pi),k}(t,y,z,\tilde{z})-f^{(\pi)}(t,y,z,\tilde{z})|)
} \\ &+& \E[|F-H_{k}(W_{s_1},{X}_{s_1},\ldots,W_{s_k},{X}_{s_k})|^2] \leq \frac{1}{k}.
\label{bbb}
\end{eqnarray}
The triangle inequality for the Skorokhod metric $d_S$ and the inequality $(x+y+z)^2\leq 3(x^2+y^2+z^2)$ imply
\begin{eqnarray}\nonumber
\lefteqn{\E[d^2_S(Y^{(\pi)},Y)]}\\
&\leq& 3\E[d^2_S(Y^{(\pi)},\tilde Y^{(\pi)})]
+ 3\E[d^2_S(\tilde Y^{(\pi)},\tilde Y)] +
3\E[d^2_S(\tilde Y,Y)] := 3d^2_1(k,\pi) + 3d^2_2(k,\pi) + 3d^2_3(k),
\label{tri}
\end{eqnarray}
where $\tilde Y$ and $\tilde Y^{(\pi)}$ denote the solutions of the BSDE and BS$\Delta$E
with terminal conditions $\tilde F=H_k(W,X)$, $\tilde F^{(\pi)} = H_k(W^{(\pi)}, X^{(\pi)})$
and drivers $\tilde f = f^k$ and $\tilde f^{(\pi)} = f^{(\pi),k}$, respectively.

We first estimate the distances between $Y^{(\pi)}$ and $\tilde Y^{(\pi)}$ and between
$Y$ and $\tilde Y$  in the supremum norm.
By applying Theorem \ref{thmstable} and Remark~\ref{remstable} we see that
the following bounds hold true:
\begin{eqnarray*}
&& \mbf d_1^2(k,\pi):=\E\le[\sup_{t\in[0,T]}|Y^{(\pi)}_t - \tilde Y^{(\pi)}_t|^2\ri] \leq
\bar C\E\le[|F^{(\pi)}-\tilde F^{(\pi)}|^2+\int_0^{T} |\delta f^{(\pi),k}(s,Z^{(\pi)}_s,\tilde{Z}^{(\pi)}_s)|^2\td s\right],\\
&& \mbf d^2_3(k):=\E\le[\sup_{t\in[0,T]}| Y_t-\tilde Y_t |^2\ri] \leq \bar{c}\,\E\left[|F-\tilde F|^2 + \int_0^{T} |\delta f^{k}(s,Z_s,\tilde{Z}_s)|^2\td s\right],
\end{eqnarray*}
where we denote $\delta f^k:=f-f^k$ and $\delta f^{(\pi),k}:=f^{(\pi)}-f^{(\pi),k}$.
By deploying the bound \eqref{bbb} and Proposition \ref{lem:convpn2}
 and using that $F^{(\pi)} \in L^2(\F^{(\pi)})$ converges to $F\in L^2(\F) $ in $L^2$,
we find
\begin{eqnarray}\label{eq:lims}
&& \limsup_{\Delta\to 0}\mbf d^2_1(k,\pi) \leq  \bar C\E[|F-\tilde F|^2]+\frac{\bar{C}}{k} \leq \frac{2\bar{C}}{k},
\quad
\limsup_{\Delta\to 0} d^2_2(k,\pi) = 0,
\quad
\mbf d^2_3(k) \leq \frac{2\bar c}{k}.
\end{eqnarray}
Since the Skorokhod metric is dominated by the supremum norm (see {\em e.g.} Eqn. VI.1.26 in
Jacod \& Shiryaev (2003)) we conclude from \eqref{tri}
and \eqref{eq:lims} that $\limsup_{\Delta\to 0}\E[d^2_S(Y^{(\pi)},Y)] \leq 6 (\bar C+\bar c )/k$
for arbitrary $k$. Hence the proof is complete.
\end{proof}
\section{Example}\label{sec:example}
By way of illustration we specify in this section a sequence of approximating BS$\Delta$Es
driven by a discrete-valued approximating sequence $(X^{(\pi)})_\pi$.
We consider the BSDE
\begin{equation}\label{bsde.ex}
Y_t = F + \int_t^T f(s, Y_s, \tilde Z_s)\td s - \int_{(t,T]\times \mbb R\backslash\{0\}}\tilde Z_s(x)
\tilde N(\td s\times \td x), \q t\in[0,T],
\end{equation}
which is driven by the compensated Poisson random measure $\tilde N$
associated to a square-integrable zero-mean
real-valued L\'{e}vy process $X$ ($d_2=1$). Here,
$f:[0,T]\times \mbb R\times L^2(\nu(\td x), \mc B(\mbb R\backslash\{0\}))\to\mbb R$ is
the driver function. As usual we assume that $f$ is continuous as function of $t$,
and uniformly Lipschitz continuous (as in \eqref{f} without the Brownian term).
We consider final conditions $F$ of the form
\begin{equation}
F = H(X_{s_0}, \ldots, X_{s_D})\q\text{with $s_i - s_{i-1} = \Delta_0$, $s_0=0$, $s_D=T$},
\end{equation}
for some Lipschitz function $H:\mbb R^{D+1}\to\mbb R$ (with Lipschitz constant $K$ say).
We also suppose that the L\'{e}vy measure $\nu$ of $X$
has Blumenthal-Getoor index\footnote{The Blumenthal-Getoor index $\beta$
of $X$ is $\beta=\inf\{p>0: \int_{\{|x|<1\}}|x|^p\nu(\td x) <\infty\}$.} $\beta<2$
and admits a strictly positive
density $g_{\nu}$ on $\mbb R\backslash\{0\}$ satisfying the integrability condition
\begin{equation}\label{gn}
\int_{\{|x|>1\}} g_\nu(x)|x|^{2+\e}\td x < \infty, \q \text{for some $\e>0$}.
\end{equation}
We mention that, under the integrability condition \eqref{gn}, $\E[|X_t|^{2+\e}]$ is finite for any $t$
(see Sato~(1999)), so that in particular $F$ is square-integrable.

For the ease of presentation we consider BS$\Delta$Es defined on grids that are refinements of
$\pi_0=\{s_0, s_1, \ldots, s_D\}$.
We next specify the final value $F^{(\pi)}$, the driver $f^{(\pi)}$ and the random walk $X^{(\pi)}$ and
denote the corresponding BS$\Delta$E on the uniform time-grid $\pi\subset\pi_0$ by
\begin{equation}\label{eq:Ypi}
Y^{(\pi)}_{t_i} = F^{(\pi)} + \sum_{t_j:t_i\leq t_j<T} f^{(\pi)}(t_j, Y^{(\pi)}_{t_j}, \tilde Z^{(\pi)}_{t_j})\Delta
- \sum_{t_j: t_i\leq t_j<T}
\left\{\tilde Z^{(\pi)}_{t_j}(\Delta X^{(\pi)}_{t_j}) - \E_{t_j}[\tilde Z^{(\pi)}_{t_j}(\Delta X^{(\pi)}_{t_j})]\right\}.
\end{equation}
Define the spatial mesh size $h$ by $h^2 = 3\Delta \Sigma^2$, where
 $\Sigma^2 = \int_{\mbb R\backslash\{0\}} x^2\nu(\td x)$ and, as before,
 $\Delta$ denotes the mesh of the partition $\pi$. Then
we have $\nu(\{x:|x|>h\})\,\Delta < 1/3$ as
$$
 \nu(\{x:|x|>h\})\, \Delta = (3\Sigma^{2})^{-1} h^2 \nu(\{x: |x|>h\}) < (3\Sigma^{2})^{-1}
 \int_{\{|x|>h\}} x^2\nu(\td x) < \frac{1}{3}.
$$
We define the distribution of the increments of $X^{(\pi)}$
in terms of the averages of the L\'{e}vy measure $\nu$ over certain sets:
$$
\alpha(A) := \frac{1}{\nu(A)} \int_{A}x\nu(\td x), \q A\in\mc B(\mbb R), \nu(A)>0.
$$
If $\alpha([-h,h]^c)\geq 0$ then set $h_-:=h$ and
$h_+:=\inf\{u\ge h: \alpha([-h,u]^c)=0\}$ and similarly if  $\alpha([-h,h]^c) < 0$
then set $h_+:=h$ and
$h_-:=\inf\{\ell\ge h: \alpha([-\ell,h]^c)=0\}$.

Setting $B_{i+1} := (h_+(i), h_+(i+1)]$ and $B_{-i-1}:=[-h_-(i+1),-h_-(i))$ for $i\in\mbb N$
for some strictly increasing sequences $(h_\pm(i))_{i\in\mbb N}$ with $h_+(1) = h_+$ and $h_-(1) = h_-$ and mesh size going to zero,
we define  for any integer $|i|\ge 2$
$$
\P(X_{t_1}^{(\pi)} = x_{i}) = p_{i}\q \text{with}\q p_{i} = \Delta\,\nu(B_{i}), \q
x_i = \frac{1}{\nu(B_i)}\int_{B_i} x\nu(\td x).
$$
Note that with this choice we have
\begin{eqnarray*}
&& \P\le(X_{t_1}^{(\pi)} \notin [h_-,h_+]\ri) = \sum_{i: |i|\ge 2} p_i = \Delta\,\nu(\{x: x\notin [h_-,h_+]\}), \\
&& \E\le[X_{t_1}^{(\pi)} I_{\{X_{t_1}^{(\pi)}\notin[h_-,h_+]\}}\ri] = \sum_{i: |i|\ge 2} x_i p_i
 = 0 = \alpha([h_-,h_+]^c).
\end{eqnarray*}
The description of the distribution of $X^{(\pi)}_{t_1}$ is completed by setting
$\P(X_{t_1}^{(\pi)} = \pm h) = p_{\pm 1}$ and $P(X_{t_1}^{(\pi)} = 0) = p_0$, where $p_0$ and $p_{\pm1}$ are chosen
so as to satisfy the conditions of unit mass and zero mean and to match the instantaneous variance:
$$
\sum_{i:|i|\ge 0}p_i = 1,\q   \sum_{i: |i|\ge 0} x_i p_i = 0, \q
\sum_{i: |i|\ge 0} p_i(x_i)^2 = \Delta \int_{\mbb R\backslash\{0\}} x^2\nu(\td x),
$$
or equivalently, $p_{-1}+p_0+p_{1} = 1 - \nu(\{x: x\notin[h_-,h_+]\})\,\Delta$, and
\begin{multline*}
(p_{1}-p_{-1})h = 0, \q (p_1 + p_{-1})h^2 = \Delta\, \int_{\{x\in[h_-,h_+]\}} x^2\nu(\td x) +
\Delta\,V(h_-,h_+) \\
\Rightarrow
p_{-1} = p_{1} = \frac{1}{6\Sigma^2} \le\{S(h_-,h_+) + V(h_-,h_+)\ri\} \leq \frac{1}{6},
\q p_0 > \frac{1}{3},
\end{multline*}
with
$$S(h_-,h_+) := \int_{[h_-,h_+]} x^2\nu(\td x)\ \text{ and }
V(h_-, h_+) := \int_{\{x\notin[h_-,h_+]\}} x^2\nu(\td x) - \sum_{i: |i|\ge 2}\frac{1}{\nu(B_i)}
\left\{\int_{B_i}x\nu(\td x)
\right\}^2,
$$
which is non-negative as a consequence of the Cauchy-Schwarz inequality.
In particular, we see that the zero-jump-condition \eqref{eq:zero} is satisfied.
We note that the approximating processes $(X^{(\pi)})_\pi$ also satisfy conditions \eqref{XX0} and \eqref{XX1}, since
by construction
$\E[|X_{t_1}^{(\pi)}|^2] = \Delta \int_{\mbb R\backslash\{0\}}
x^2\nu(\td x)$, while the expectation of $\le|X_{t_1}^{(\pi)}\ri|$ is $o(\sqrt{\Delta})$, since we have
\begin{eqnarray*}
\E\le[\le|X_{t_1}^{(\pi)}\ri|\ri] = (p_1+p_{-1})h + \Delta \int_{[h_-,h_+]^c}|x|\nu(\td x),
\end{eqnarray*}
where $p_1+p_{-1}$ tends to zero when $\Delta\to 0$ and the second term is bounded by
$c\cdot h^{2-\beta/2}$ (which is  $o(\sqrt{\Delta})$ as $\Delta\to 0$ since $\beta<2$ by assumption)
with $c=\int |x|^{1+\beta/2}\nu(\td x)/(3\Sigma^2)$ (which is finite by definition of $\beta$
and $\int x^2\nu(\td x) < \infty \Leftrightarrow \E[X_t^2]<\infty$).

Furthermore, it is easily checked that the sequence $(X^{(\pi)})_\pi$ also satisfies the conditions in \eqref{X2} and
\eqref{XX2}. In particular, $X^{(\pi)}\stackrel{\mc L}{\to} X$ as $\Delta\to 0$,
and on a suitably chosen probability space,  $X^{(\pi)}$ converges to $X$ in probability in the Skorokhod $J_1$-topology,
and  $X^{(\pi)}_T$ converges to $X_T$ in $L^2$.

Next we define $F^{(\pi)} = H(X^{(\pi)}_{s_0}, \ldots, X^{(\pi)}_{s_D})$. By the Lipschitz continuity of $H$
and the convergence of $X^{(\pi)}$ to $X$ in $\mc S^2$ (by Doob's maximal inequality) it follows that also $F^{(\pi)}$ converges to $F$ in $L^2$.

Finally, we specify $f^{(\pi)}$ in terms of $f$ by $f^{(\pi)}(t,y,\tilde z) = f(t,y,Q\tilde z)$
with
$$
(Q\tilde z)(x) =
\begin{cases}
\tilde z(x_i), & x\in B_i, i\neq 1, i\neq 0,\\
\frac{x}{\sqrt{2}h}(\tilde z(-h) + \tilde z(+h)), & x\in[h_-, h_+]\backslash\{0\},\\
0, &  x = 0.
\end{cases}
$$
It is straightforward to verify that the drivers $f^{(\pi)}$ satisfy the required regularity conditions.
In particular,
the uniform Lipschitz-continuity of $f^{(\pi)}$ (as in \eqref{KLip})
can be derived as follows:
for any $y_1,y_0\in\mbb R$, $\tilde z_1,\tilde z_0\in L^2(\nu^{(\pi)}, \mc B(\mbb R\backslash\{0\}))$
the Lipschitz continuity of $f$ implies
\begin{eqnarray*}
&& |f^{(\pi)}(t,y_1,\tilde z_1) - f^{(\pi)}(t,y_0,\tilde z_0)| \leq K\le(|y_1-y_0| + \sqrt{I_Q}\ri), \q\text{with}
\q I_Q := \int_{\mbb R\backslash\{0\}} |Q\tilde z_1(x) - Q\tilde z_0(x)|^2\nu(\td x).
\end{eqnarray*}
Inserting the definitions of $Q\tilde z_0$ and $Q\tilde z_1$ shows
\begin{eqnarray*}
\lefteqn{I_Q = \sum_{i:|i|\ge 2} |\tilde z_1(x_i) - \tilde z_0(x_i)|^2 \nu(B_i)
+\ \frac{\le(\tilde z_1(h) - \tilde z_0(h) + \tilde z_1(-h) - \tilde z_0(-h)\ri)^2}{2}
\frac{S(h_-,h_+)}{ h^2}} \\
&\leq& \sum_{i:|i|\ge 2} \frac{p_i}{\Delta} |\tilde z_1(x_i) - \tilde z_0(x_i)|^2
+ \le(\tilde z_1(h) - \tilde z_0(h)\ri)^2\frac{p_1}{\Delta} + \le(\tilde z_1(-h) - \tilde z_0(-h)\ri)^2
\frac{p_{-1}}{\Delta} \\
 &=&  \int_{\mbb R\backslash\{0\}} |\tilde z_1(x) - \tilde z_0(x)|^2\nu^{(\pi)}(\td x).
\end{eqnarray*}

We next move to the description of the solution of the BS$\Delta$E.
Specifically, we have from Proposition~\ref{lem:BSDEsol} that the solution is given by
\begin{eqnarray}\label{eq:Ypi2}
&& Y_{t_i}^{(\pi)} = v_i\le(X_{t_0}^{(\pi)}, \ldots, X_{t_i}^{(\pi)}\ri), \q i=0, \ldots, N-1,\\
&&\tilde Z_{t_i}^{(\pi)}(x) = w_i\le(X_{t_0}^{(\pi)}, \ldots, X_{t_i}^{(\pi)};x\ri) -
w_i\le(X_{t_0}^{(\pi)}, \ldots, X_{t_i}^{(\pi)}; 0\ri),
\end{eqnarray}
with $Y_{t_N}^{(\pi)}=F^{(\pi)}$, for certain functions $v_i:\mbb R^i\to\mbb R$ and $w_i:\mbb R^{i+1}\to\mbb R$
that are specified recursively as follows:
\begin{eqnarray}\nonumber
w_i(\unl z_{0,i};x) &=& \E[Y_{t_{i+1}}^{(\pi)}|
X_{t_0}^{(\pi)}=z_0, \ldots, X_{t_i}^{(\pi)}=z_i, \Delta X_{t_i}^{(\pi)}=x]
= v_{i+1}(\unl z_{0,i}, z_i+x), \q
x\in E^{(\pi)},\\
v_{i}(\unl z_{0,i}) &=&
f^{(\pi)}(t_i, v_i(\underline z_{0,i}), w_i(\underline z_{0,i};\cdot)- w_i(\underline z_{0,i};0))
+ \sum_{x_j\in E^{(\pi)}} v_{i+1}(\underline z_{0,i}, z_i+x_j) p_j,
\label{vi}
\end{eqnarray}
where $\underline z_{0,i} = (z_0, \ldots, z_i)$ and
$E^{(\pi)}=\{x_i: i\in\mbb Z\}$ denotes the support of the step-size distribution of $X^{(\pi)}$.
While in general  $v_i$ is only implicitly defined by \eqref{vi},
the recursion in \eqref{vi} has an explicit solution when the driver $f(t,y,\tilde z)$ (and thus $f^{(\pi)}(t,y,\tilde z)$) is constant as function of $y$.
In this case, the solution $Y$ of the BSDE is translation invariant
in the sense that $Y({F+a})= a + Y(F)$ for $a\in\mbb R$, where $Y(F)$ denotes the solution of the BSDE with final condition $F$
(see Royer~(2006)).
By Theorem~\ref{thm}, the solution $Y^{(\pi)}$ of the BS$\Delta$E \eqref{eq:Ypi} specified in
\eqref{eq:Ypi2} converges to the solution $Y$ of the BSDE \eqref{bsde.ex}
in $L^2$ in the Skorokhod $J_1$-topology.

\appendix
\section{Proof of Theorem~\ref{thmstable}}
The structure of the proof is inspired by that of
an analogous estimate derived in a Wiener setting in
Proposition 7 in Briand {\em et al.}~(2002).

Assuming without loss of generality $t_i=T$ and that $f^{(\pi)}(t,0,0,0)=0,$
and simplifying notation by dropping in the proof the superscripts $(\pi)$ in the solution $(Y^{(\pi)}, Z^{(\pi)}, \tilde Z^{(\pi)}, M^{(\pi)})$,
 we have for $t_j,t_k\in\pi$ with $t_j< t_k$
\begin{eqnarray}\nonumber
\delta Y_{t_j} &=& \delta Y_{t_k}+ \sum_{r=j}^{k-1}\bigg(\delta f^{(\pi)}(t_r,Y^{0}_{t_r},Z_{t_r}^{0},\tilde{Z}_{t_r}^{0}) +  f^{(\pi),1}(t_r,Y^{0}_{t_r},Z_{t_r}^{0},\tilde{Z}_{t_r}^{0})-f^{(\pi),1}(t_r,Y_{t_r}^{1},Z_{t_r}^{1},\tilde{Z}_{t_r}^{1})\bigg)\Delta\\
&\phantom{=}& - \sum_{r=j}^{k-1}\{\delta Z_{t_r}\Delta W^{(\pi)}_{t_r}+
\delta \tilde{Z}_{t_r}(\Delta X^{(\pi)}_{t_r}) - E_{t_r}[\delta \tilde{Z}_{t_r}(\Delta X^{(\pi)}_{t_r})]\}
- (\delta M_{t_k} - \delta M_{t_j}). \label{eq:ddBsde}
\end{eqnarray}
Since the functions $f^{(\pi)}$ are $K$-Lipschitz and assuming without loss of generality $K>1$, we have
\begin{eqnarray*}
|\delta Y_{t_j}| \leq \E_{t_j}\le[|\delta Y_{t_k}| +
K\sum_{r=j}^{k-1}\bigg\{\le|\delta f^{(\pi)}(t_r,Y^{0}_{t_r},Z_{t_r}^{0},\tilde{Z}_{t_r}^{0})\ri|
+ |\delta Y_{t_r}| + |\delta Z_{t_r}|
+ \sqrt{\E_{\nu^{(\pi)}}([\delta \tilde Z_{t_r}(\xi)]^2)}\bigg\}\Delta\ri]
\end{eqnarray*}
and an application Doob's inequality yields for $t_m< t_k$ with $t_m,t_k\in\pi\backslash\{0\}$
\begin{eqnarray}
\mathbb{E}\left[\sup_{m \leq j < k} |\delta Y_{t_j}|^2\right] &\leq&
4\mathbb{E}\bigg[ \bigg(|\delta Y_{t_k}|
 + K \sum_{r=m}^{k-1}\bigg\{\le|\delta f^{(\pi)}(t_r,Y^{0}_{t_r},Z_{t_r}^{0},\tilde{Z}_{t_r}^{0})\ri|
\nonumber
 \\
&\phantom{=}& + |\delta Y_{t_r}| + |\delta Z_{t_r}|
+ \sqrt{\E_{\nu^{(\pi)}}([\delta \tilde Z_{t_r}(\xi)]^2)}\bigg\}\Delta\bigg)^2\bigg].
\label{ffour}
\end{eqnarray}
Since $W^{(\pi)}$, $X^{(\pi)}$ and $\delta
M^{(\pi)}$ are orthogonal martingales, we have
\begin{eqnarray}\label{eq:est1}
\lefteqn{\E\le[\left|
\sum_{r=m}^{k-1}
\le\{\delta Z_{t_r} \Delta W^{(\pi)}_{t_r} +
\delta\tilde{Z}_{t_r}(\Delta X^{(\pi)}_{t_r})
- \E_{t_r}[\delta\tilde{Z}_{t_r}(\Delta X^{(\pi)}_{t_r})]\ri\}
+ \delta M_{t_{k}} -
\delta M_{t_m}\right|^2\ri]}\\
&=&
\E\le[\sum_{r=m}^{k-1}|\delta Z_{t_r}|^2\Delta +
\<\delta \tilde M\>_{t_{k}} - \<\delta \tilde M\>_{t_{m}} +
\<\delta M\>_{t_{k}} - \<\delta M\>_{t_{m}}
\ri] \nonumber
\end{eqnarray}
where $\delta\tilde M$ is the martingale that is piecewise constant (outside the partition $\pi$)
and has increment $\delta\tilde M_{t_{r+1}} -
\delta\tilde M_{t_{r}}$ given by $\ell_{t_r}(\Delta X^{(\pi)}_{t_r}):=
\delta\tilde{Z}_{t_r}(\Delta X^{(\pi)}_{t_r}) - \E_{t_r}[\delta\tilde{Z}_{t_r}(\Delta X^{(\pi)}_{t_r})]$, and
 $\<\delta \tilde M\>$ and $\<\delta M\>$ denote the predictable compensators
of $\delta M$ and $\delta\tilde M$, which are equal to
$$
\<\delta M\>_{t_{i}} = \sum_{t_j\leq t_{i-1}} \E_{t_j}[|\Delta M^{(\pi)}_{t_j}|^2],\q
 \<\delta \tilde M\>_{t_{i}} = \sum_{t_j\leq t_{i-1}} \E_{t_j}[|\Delta \ell_{t_j}(X^{(\pi)}_{t_j})|^2].
$$
Using the fact that $f^{(\pi),1}$ is $K$-Lipschitz
using ~\eqref{eq:ddBsde} we obtain
\begin{eqnarray}\nonumber
\lefteqn{ \left|
\sum_{r=m}^{k-1}
\le\{\delta Z_{t_r} \Delta W^{(\pi)}_{t_r} +
\delta\tilde{Z}_{t_r}(\Delta X^{(\pi)}_{t_r})
- \E_{t_r}[\delta\tilde{Z}_{t_r}(\Delta X^{(\pi)}_{t_r})]\ri\}
+ \delta M_{t_{k}} -
\delta M_{t_m}
\right|}
\\ \nonumber
&\leq&
|\delta Y_{t_{k}}| + K\sum_{r=m}^{k-1}\bigg\{\le|\delta f^{(\pi)}(t_r,Y^{0}_{t_r},Z_{t_r}^{0},\tilde{Z}_{t_r}^{0})\ri|
+ |\delta Y_{t_r}| \\
&\phantom{=}&
+ |\delta Z_{t_r}|
+ \sqrt{\E_{\nu^{(\pi)}}([\delta \tilde Z_{t_r}(\xi)]^2)}\bigg\}\Delta
+ \sup_{m\leq r < k}{|\delta Y_{t_r}|}.
\label{eq:est2}
\end{eqnarray}
By combining the estimates in~\eqref{ffour}, \eqref{eq:est1}, \eqref{eq:est2} we get
\begin{multline*}
\E\le[ \sup_{m\leq r < k}|\delta Y_{t_r}|^2  +
\sum_{r=m}^{k-1}|\delta Z_{t_r}|^2\Delta +
\<\delta \tilde M\>_{t_{k}} - \<\delta \tilde M\>_{t_m} +
\<\delta M\>_{t_{k}} - \<\delta  M\>_{t_m}
\ri]\\
\leq 14\,
 \mathbb{E}\bigg[\bigg(|\delta Y_{t_k}|
+ K\sum_{r=m}^{k-1}\bigg\{\le|\delta f^{(\pi)}(t_r,Y^{0}_{t_r},Z_{t_r}^{0},\tilde{Z}_{t_r}^{0})\ri|
 \\
 + |\delta Y_{t_r}| + |\delta Z_{t_r}|
+ \sqrt{\E_{\nu^{(\pi)}}([\delta \tilde Z_{t_r}(\xi)]^2)}\bigg\}\Delta
 \bigg)^2 \bigg].
\end{multline*}
An application of H\"{o}lder's inequality
leads then to the estimate
\begin{eqnarray}\nonumber
\lefteqn{\E\le[\sup_{m\leq r < k}|\delta Y_{t_r}|^2  +
\sum_{r=m}^{k-1}|\delta Z_{t_r}|^2\Delta +
\<\delta \tilde M\>_{t_{k}} - \<\delta \tilde M\>_{t_m}
+ \<\delta M\>_{t_{k}} -
\<\delta M\>_{t_m}\ri]}\\ \nonumber
&\leq& C(t_k-t_m) \E\bigg[\max_{m\leq r<k} |\delta Y_{t_r}|^2
+ \sum_{r=m}^{k-1}\bigg\{\le|\delta f^{(\pi)}(t_r,Y^{0}_{t_r},Z_{t_r}^{0},\tilde{Z}_{t_r}^{0})\ri|^2
+  |\delta Z_{t_r}|^2
+ \E_{\nu^{(\pi)}}([\delta \tilde Z_{t_r}(\xi)]^2)\bigg\}\Delta \bigg]\\
&& + 42\,
 \mathbb{E}[|\delta Y_{t_k}|^2]
 \label{fest}
\end{eqnarray}
with $C(u)= 126 K^2\max\{u^2,u\}$ independent of $\pi$.

Next we let $r_0\in(0,T)$ be such that $C(r)\leq \frac{1}{6}\min\{1,C'\}$ for all $r\leq r_0$, where
$C'$ is the constant from Lemma~\ref{finitejumps} [with $\tilde U$ taken equal to the function $\delta\tilde Z^{(\pi)}$].
Let us fix\ $b=[T/r_0]+1$ and consider the regular partition of $[0,T] $ into $b$ intervals. We set for $0\leq \ell \leq b-1,
I_\ell=\{k: t_k\in\pi\cap[\ell T/b,(\ell+1)T/b)\}$, $\ell_*=\min I_\ell$, $\ell^*=\max I_\ell+1$.
Then we obtain from \eqref{fest} and Lemma~\ref{finitejumps} that for every $\ell^*$
\begin{eqnarray*}
\lefteqn{\E\le[\sup_{\ell_*\leq r<\ell^*}|\delta Y_{t_r}|^2  +
\sum_{r=\ell_*}^{\ell^* - 1} |\delta Z_{t_r}|^2\Delta
+
\<\delta \tilde M\>_{t_{\ell^*-1}} - \<\delta \tilde M\>_{t_{\ell_*}}
+\<\delta M\>_{t_{\ell_*}} -
\<\delta M\>_{t_{\ell^*}-1}\ri]}\\
&\leq& 42\cdot\frac{6}{5}\,
 \mathbb{E}\bigg[|\delta Y_{t_\ell^*}|
 + \frac{1}{5}\sum_{r=\ell_*}^{\ell^*-1}\le|\delta f^{(\pi)}(t_r,Y^{0}_{t_r},Z_{t_r}^{0},\tilde{Z}_{t_r}^{0})\ri|^2\Delta\bigg].
\end{eqnarray*}
The proof is completed by a repeated application of this inequality.

\section{Proof of Lemma \ref{tildez}}\label{secproofs}
\begin{proof}[Proof of part (i)]
Recall that
$\tilde{Z}^{(\pi),p}_{t_i}(0)=0$ for all $i$. Thus, to prove
(\ref{lipz}), it is enough to show that
$\tilde{Z}^{(\pi),p}_{t_i}(x)$ is uniformly Lipschitz in
$x\in\mathbb{R}^{d_2}$. Given the assumed form of $F$,
it is possible to find a function
$y^{(\pi),p}_{t_i}:\mathbb{R}^{(d_1+d_2)i}\to \mathbb{R}$ such
that $y^{(\pi),p}_{t_i}(\Delta W^{(\pi)}_{t_0},\Delta
{X}^{(\pi)}_{t_0},\ldots,\Delta W^{(\pi)}_{t_{i-1}},\Delta
{X}^{(\pi)}_{t_{i-1}}):=Y^{(\pi),p}_{t_i}.$
Subsequently, we will suppress
the arguments $\Delta W^{(\pi)}_{t_0},\Delta
{X}^{(\pi)}_{t_0},\ldots,\Delta W^{(\pi)}_{t_{i-2}},\Delta {X}^{(\pi)}_{t_{i-2}}$
whenever there is no ambiguity and write $y^{(\pi),p}_{t_i}(\Delta
W^{(\pi)}_{t_{i-1}},\Delta {X}^{(\pi)}_{t_{i-1}}).$

Fix $t\in[0,T]$ and for every
mesh size $\Delta$ choose $i$ such that $i\Delta\leq t<(i+1)\Delta$ and
denote $w=\Delta w_{t_{i}}$ and $x=\Delta {x}_{t_{i}}.$
Denote
by $Y^{(\pi),p,w,x}_{t_j}$ for $j\geq i+1$ the process
$(Y^{(\pi),p}_{t_j})_{j\geq i+1}$ conditional on $\Delta W^{(\pi)}_{t_{i}}=
w$ and $\Delta {X}^{(\pi)}_{t_{i}} = x.$ For $j\geq i+1$ the
conditioned BS$\Delta$E with solution
$(Y^{(\pi),p,w,x}_t,Z^{(\pi),p,w,x}_t,\tilde{Z}^{(\pi),p,w,x}_t,M^{(\pi),p,w,x}_t)$
can be written as
 \begin{align} \label{condBSDE2}
Y^{(\pi),p,w,x}_{t_j} &= F^{(\pi),w,x}+ \sum_{t_u\geq
t_j}f(t_u,
Y^{(\pi),p-1,w,x}_{t_u},Z^{(\pi),p-1,w,x}_{t_u},\tilde{Z}^{(\pi),p-1,w,x}_{t_u})\Delta-
\sum_{t_u\geq t_j}Z^{(\pi),p,w,x}_{t_u}\Delta W^{(\pi)}_{u}\nonumber\\
& \hspace{0.2cm} -
 \sum_{t_u\geq
t_j} \Big\{\tilde{Z}^{(\pi),p,w,x}_{t_u}(\Delta X^{(\pi)}_{t_i})-\E_{t_{i-1}}[\tilde{Z}^{(\pi),p,w,x}_{t_u}(\Delta X^{(\pi)}_{t_i})]\Big\}
- (M^{(\pi),p,w,x}_{T}-M^{(\pi),p,w,x}_{t_j}),
\end{align}
 with
\begin{align*} F^{(\pi),w,x} &=
H\Big(w_{t_1},\ldots,w_{t_i}+ w,\ldots,w_{t_i} +w+
W^{(\pi)}_{T}-W^{(\pi)}_{t_{i+1}},\\
&\hspace{0.8cm} {x}_{t_1},\ldots,
 {x}_{t_i}+x,\ldots, {x}_{t_i}+
x+{X}^{(\pi)}_{T}-{X}^{(\pi)}_{t_{i+1}}\Big).
\end{align*}

 Clearly,
$y^{(\pi),p}_{t_{i+1}}(w,x)$  has the same law as
$Y^{(\pi),p,w,x}_{t_{i+1}}$. To simplify notation let us assume for
the rest of the proof that $W$ and $X$ (and hence $W^{(\pi)}$ and $X^{(\pi)}$) are
one-dimensional.
The lemma would follow if we could prove through an induction
over $p$ that for every $p$ the following holds: For every $l\in\mathbb{N}_0$
there exist constants
$\tilde{K}_{Y,l,p},\tilde{K}_{Z,l,p},\tilde{K}_{\tilde{Z},l,p}>0$
such that for every $m,k=0,\ldots,l,$ and for all $t:$
\begin{itemize}
\item[(a)] For the mappings $(w,x)\to Y^{(\pi),p,w,x}_j$ we
    have that $\sup_{w,x,\Delta, t_j>
t}\Big|\frac{\partial^{m+k}}{\partial w^m \partial
x^{k}}Y^{(\pi),p,w,x}_{t_j}\Big|\leq \tilde{K}_{Y,l,p}.$
\item[(b)] For the mappings $(w,x)\to
    \tilde{Z}^{(\pi),p,w,x}_{t_j}$ we have that
    $\sup_{w,x,\Delta,t_j> t}\Big|\frac{\partial^{m+k}}{\partial
    w^m \partial
    x^{k}}\tilde{Z}^{(\pi),p,w,x}_{t_j}\Big|\leq
\tilde{K}_{\tilde{Z},l,p}.$
\item[(c)] For the mappings $(w,x)\to
     Z^{(\pi),p,w,x}_{t_j}$ we have that $\sup_{w,x,\Delta,t_j>
t}\Big|\frac{\partial^{m+k}}{\partial w^m \partial
x^{k}} Z^{(\pi),p,w,x}_{t_j}\Big|\leq \tilde{K}_{Z,l,p}.$
\end{itemize}

Notice that (b) implies in particular that $\sup_{w,x,\Delta,t_j>
t}\Big|\tilde{Z}^{(\pi),p,w,x}_{t_j}(x')\Big|\leq
(\tilde{K}_{\tilde{Z},1,p}\vee
\tilde{K}_{\tilde{Z},0,p})(1\wedge |x'|).$

Let us prove (a)---(c). As
$Y^{(\pi),0}=Z^{(\pi),0}=\tilde{Z}^{(\pi),0}=0$, (a)--(c) clearly hold for
$p=0$ with
$\tilde{K}_{Y,l,0}=\tilde{K}_{Z,l,0}=\tilde{K}_{\tilde{Z},l,0}=0$
for all $l$. Now assume that we have shown the induction for
$p-1.$ Let us next show (a)-(c) for $p.$

By the induction assumption for all $j\Delta \geq t$ all higher
derivatives of the processes $Y^{(\pi),p-1,w,x}_j,$
$Z^{(\pi),p-1,w,x}_j,$ and $\tilde{Z}^{(\pi),p-1,w,x}_j$ with respect
to $w$ and $x$ satisfy (a)---(c). As by assumption also all higher
derivatives of $f^{(\pi)}(t_j,\cdot,\cdot,\cdot)$ are bounded as
well uniformly in $t$, $j$ and $\Delta$ with $t_j> t$ we have that
$$\frac{\partial^{m+k}}{\partial w^m \partial x^{k}}f^{(\pi)}(t_j,
Y^{(\pi),p-1,w,x}_{t_j},Z^{(\pi),p-1,w,x}_{t_j},\tilde{Z}^{(\pi),p-1,w,x}_{t_j})
$$
is uniformly bounded by a constant, say $\hat{K}_{l,p-1}.$
Now (\ref{condBSDE2}) entails that
\begin{align*} &\sup_{w,x,\Delta,t_j>
t}\Big|\frac{\partial^{m+k}}{\partial
w^m \partial x^{k}}Y^{(\pi),p,w,x}_{t_j}\Big|\\
& = \sup_{w,x,\Delta, t_j>
t}\bigg|\E_{t_j}\left[\frac{\partial^{m+k}}{\partial w^m \partial x^{k}} F^{(\pi),w,x}+
\sum_{u\geq j}\frac{\partial^{m+k}}{\partial w^{m} \partial x^{k}}f^{(\pi)}(t_u,
Y^{(\pi),p-1,w,x}_{t_u},Z^{(\pi),p-1,w,x}_{t_u},\tilde{Z}^{(\pi),p-1,w,x}_{t_u})\Delta\right]\bigg|
\\&
\leq \sup_{w,x,\Delta}\Big|\Big|\frac{\partial^{m+k}}{\partial w^m \partial x^{k}}
F^{(\pi),w,x}\Big|\Big|_\infty+T\sup_{w,x,\Delta,t_j>
t}
\Big|\Big|\frac{\partial^{m+k}}{\partial w^m \partial x^{k}}f^n((j+1)/n,
Y^{(\pi),p-1,w,x}_{t_j},Z^{(\pi),p-1,w,x}_{t_j},\tilde{Z}^{(\pi),p-1,w,x}_{t_j})\Big|\Big|_\infty
\\
&\leq \tilde{K}_{H,l}+T\hat{K}_{l,p-1}=:\tilde{K}_{Y,l,p},
\end{align*}
where $\tilde{K}_{H,l}$ is the uniform bound of the derivatives
of the function $H$ up to order $l$ for every $l\in \mathbb{N}_0$. This shows that (a) holds.
The validity of (b)
follows immediately from that of (a) and the form \eqref{z2} of $\tilde Z$.

To see that (c) holds true note that for every $t_j> t$
\begin{align*}
|&\frac{\partial^{m+k}}{\partial w^m \partial x^{k}}Z^{(\pi),p,w,x}_{t_j}|
\\
&= \Delta^{-1}\Big|
\E_{t_j}\left[\frac{\partial^{m+k}}{\partial w^m \partial x^{k}}
Y^{(\pi),p,w,x}_{t_j}\Delta W^{(\pi)}_{t_j}\right] \Big|
\\
&= \Delta^{-1} \bigg| \mathbb{E}_{t_j}
\bigg[\Big( \frac{\partial^{m+k}}{\partial w^m \partial x^{k}}y^{(\pi),p,w,x}_{t_{j}}(\Delta W^{(\pi)}_{t_j},\Delta {X}^{(\pi)}_{t_j})
-\frac{\partial^{m+k}}{\partial w^m \partial x^{k}} y_{t_j}^{(\pi),p,w,x}(0,\Delta{X}^{(\pi)}_{t_j})\Big)
\Delta W^{(\pi)}_{t_j}\bigg]\bigg|
\\
&\leq \Delta^{-1} \mathbb{E}_{t_j}\bigg[\Big|
\frac{\partial^{m+k}}{\partial w^m \partial x^{k}}y^{(\pi),p,w,x}_{t_{j}}(\Delta W^{(\pi)}_{t_j},\Delta {X}^{(\pi)}_{t_j})
- \frac{\partial^{m+k}}{\partial w^m \partial x^{k}}y_{t_j}^{(\pi),p,w,x}(0,\Delta {X}^{(\pi)}_{t_{j}})\Big| |
\Delta W^{(\pi)}_{t_{j+1}}|\bigg]\\
&\leq \Delta^{-1} \tilde{K}_{Y,l+1,p} \E\left[
\big|\Delta W^{(\pi)}_{t_{j}}\big|'\,\,|\Delta W^{(\pi)}_{t_j}\big|\right]
=  \Delta^{-1} \tilde{K}_{Y,l+1,p}\E\left[\Big|\Delta W^{(\pi)}_{t_j}\Big|^2\right]=\tilde{K}_{Y,l+1,p}.
\end{align*}
This establishes that (c) holds with $\tilde{K}_{Z,l,p}:=\tilde{K}_{Y,l+1,p}.$
The proof of the induction is complete.
\end{proof}

\begin{proof}[Proof of part (ii).]
The proof is analogous to the proof of part (i).
Denote by $Y^{p,\infty ,x}_{s}$ for $s\geq t$ the process $(Y^{p,\infty}_{t})_{s\geq
t}$ conditional on $\Delta {X}_t = X_t - X_{t^-} = x$.
For $s\geq t$ the
conditioned BSDE with solution
$(Y^{p,\infty ,x}_s,Z^{p,\infty ,x}_s,\tilde{Z}^{p,\infty ,x}_s)$ can be written as
 \begin{align*}
Y^{p,\infty,x}_{s} &= F^{x}+ \int_s^T f(u,
Y^{p-1,\infty,x}_{u},Z^{p-1,\infty,x}_{u},\tilde{Z}^{p-1,\infty,x}_{u})\td u-\int_s^T Z^{p,\infty,x}_u \td W_u
\\
&\hspace{2cm}-
\int_{(s,T]\times\mathbb{R}^{d_2}\setminus\{0\}}\tilde{Z}^{p,\infty,x}_u(z)\tilde{N}_p(\td u\times \td z).
\end{align*}
One may check directly that we have $\tilde{Z}^{p,\infty,x}_t(z)=
Y^{p,\infty,x+z}_t-Y^{p,\infty,x}_t,$ so
that we only have to show that
$\frac{\partial Y^{p,\infty,x}_t}{\partial x}$ is uniformly bounded.
Using the assumptions on $H$ (Assumption~\ref{as:smooth})
this follows by a line of reasoning that is analogous to the one followed in
part (i).
\end{proof}

{\footnotesize 
}

\end{document}